\newcommand{\nc}{\newcommand}
\numberwithin{equation}{section}
\newenvironment{rouge}{\relax\color{red}}{\relax}
\newenvironment{blue}{\relax\color{blue}}{\hspace*{.5ex}\relax}
\newenvironment{jaune}{\relax\color{Orchid}}{\hspace*{.5ex}\relax}
\nc{\hs}{\hspace*}
\newcommand{\ber}{\begin{rouge}}
\newcommand{\er}{\end{rouge}}
\nc{\bjn}{\begin{jaune}}
\nc{\ejn}{\end{jaune}}
\newcommand{\beb}{\begin{blue}}
\newcommand{\eb}{\end{blue}}
\newcommand{\berm}{\begin{rouge}{}\marginnote{\fbox{\scshape\lowercase{M}}}{}}
\newcommand{\bers}{\begin{blue}{}\marginnote{\fbox{\scshape\lowercase{O}}}{}}
\theoremstyle{plain}
\newtheorem{lemma}{Lemma}[section]
\newtheorem{proposition}[lemma]{Proposition}
\newtheorem{theorem}[lemma]{Theorem}
\newtheorem{corollary}[lemma]{Corollary}
\newtheorem{conjecture}[lemma]{Conjecture}
\theoremstyle{definition}
\newtheorem{remark}[lemma]{Remark}
\newtheorem{example}[lemma]{Example}
\newtheorem{definition}[lemma]{Definition}
\newcommand{\hd}{{\operatorname{hd}}}
\newcommand{\soc}{{\operatorname{soc}}}
\newcommand{\g}{\mathfrak{g}}
\newcommand{\F}{\mathcal{F}}
\newcommand{\C}{\mathbb{C}}
\newcommand{\Z}{\mathbb{Z}}
\newcommand{\A}{\mathbb{A}}
\newcommand{\N}{\mathsf{N}}
\newcommand{\Q}{\mathcal{Q}}
\newcommand{\cmA}{\mathsf{A}}
\newcommand{\W}{\mathsf{W}_0}
\newcommand{\seteq}{\mathbin{:=}}
\newcommand{\al}{\alpha}
\newcommand{\ga}{\gamma}
\newcommand{\re}{s}
\newcommand{\WUp}{\widehat{\Upsilon}}
\newcommand{\PR}{\Phi^+}
\newcommand{\tw}{{\widetilde{w}}}
\newcommand{\redez}{{\widetilde{w}_0}}
\newcommand{\um}{\underline{m}}
\newcommand{\un}{\underline{n}}
\newcommand{\us}{\underline{s}}
\newcommand{\up}{\underline{p}}
\newcommand{\tb}{\mathtt{b}}
\newcommand{\rl}{\mathsf{Q}}
\newcommand{\wl}{\mathsf{P}}
\newcommand{\cl}{{\rm cl}}
\newcommand{\wt}{{\rm wt}}
\newcommand{\ok}{\overline{k}}
\newcommand{\ol}{\overline{l}}
\newcommand{\dist}{{\rm dist}}
\newcommand{\het}{{\rm ht}}
\newcommand{\mQ}{\mathscr{Q}}
\newcommand{\mC}{\mathscr{C}}
\newcommand{\Rep}{{\rm Rep}}
\newcommand{\soplus}{\mathop{\mbox{\normalsize$\bigoplus$}}\limits}
\newcommand{\lf}{[\hspace{-0.3ex}[}
\newcommand{\rf}{]\hspace{-0.3ex}]}
\newcommand{\ko}{\mathbf{k}}
\newcommand{\conv}{\mathbin{\mbox{\large $\circ$}}}
\newcommand{\Lto}{\longrightarrow}
\newcommand{\tens}{\mathop\otimes}
\newcommand{\Rnorm}{R^{\rm{norm}}}
\nc{\ii}{{t}}
\newcommand{\Stom}{S_{[\redez]}(\um)}
\newcommand{\VtoQm}{V^{(\ii)}_{Q}(\um)}
\newcommand{\VtomQm}{V_{\mQ}(\um)}
\newcommand{\dct}[2]{\overset{#2}{\underset{#1}{\conv}} }
\newcommand{\prt}[2]{ \left( \begin{matrix} #1 \\ #2 \end{matrix}\right) }
\newcommand{\To}[1][{\hspace{2ex}}]{\xrightarrow{\hs{.8ex}#1\hs{.8ex}}}
\newcommand{\isoto}[1][]{\mathop{\xrightarrow%
[{\raisebox{.3ex}[0ex][.3ex]{$\scriptstyle{#1}$}}]%
{{\raisebox{-.6ex}[0ex][-.6ex]{$\mspace{2mu}\sim\mspace{2mu}$}}}}}
\newlength{\mylength}
\newcommand*\ov[1]{\overline{#1}}
\nc{\lan}{\langle}
\nc{\ran}{\rangle}
\newcommand{\be}{\begin{enumerate}}
\newcommand{\ee}{\end{enumerate}}
\nc{\ro}{{\rm(}}
\nc{\rof}{{\rm)}}
\newenvironment{myequation}
{\relax\setlength{\arraycolsep}{1pt}\begin{eqnarray}}
{\end{eqnarray}}
\newenvironment{myequationn}
{\relax\setlength{\arraycolsep}{1pt}\begin{eqnarray*}}
{\end{eqnarray*}}
\nc{\eq}{\begin{myequation}}
\nc{\eneq}{\end{myequation}}
\nc{\eqn}{\begin{myequationn}}
\nc{\eneqn}{\end{myequationn}}
\newenvironment{myarray}[1]{\relax\setlength{\arraycolsep}{1pt}

\begin{array}{#1}}{\end{array}\relax}
\newcommand{\ba}{\begin{myarray}}
\newcommand{\ea}{\end{myarray}}
\nc{\bc}{\begin{cases}}
\nc{\ec}{\end{cases}}
\nc{\cor}{\ko}
\nc{\vs}{\vspace*}
\nc{\KLR}{quiver Hecke\xspace}
\nc{\fg}{\mathsf{g}}
\nc{\ffg}{ \textbf{\textit{g}}}
\title[Categorical Langlands duality]{
Categorical relations between
Langlands dual quantum affine algebras: \\ Doubly laced types}
\author[M. Kashiwara, S.-j. Oh]{Masaki Kashiwara$^\dagger$, Se-jin Oh$^\ddagger$}
\address{Research Institute for Mathematical Sciences \\
Kyoto University \\ Kyoto 606-8502, Japan \\
\& Department of Mathematical Sciences and
School of Mathematics \\
Korea Institute for Advanced Study \\ Seoul 130-722, Korea }
\email{masaki@kurims.kyoto-u.ac.jp}
\address{Department of Mathematics Ewha Womans University Seoul 120-750, Korea}
\email{sejin092@gmail.com}
\thanks{$^\dagger$ The research
was supported by Grant-in-Aid for Scientific Research (B)
15H03608, Japan Society for the Promotion of Science.}
\thanks{$^\ddagger$ This work was supported by NRF Grant \# 2016R1C1B2013135.}
\keywords{longest element, $r$-cluster point, Schur-Weyl diagram,
combinatorial Auslander-Reiten quivers, Langlands duality}
\date{\today}
\begin{document}

\begin{abstract}
We prove that the Grothendieck rings of category $\mathcal{C}^{(t)}_Q$
over quantum affine algebras $U_q'(\g^{(t)})$ $(t=1,2)$
 associated to each Dynkin quiver $Q$ of finite type
$A_{2n-1}$ (resp. $D_{n+1}$)
is isomorphic to one of category $\mathcal{C}_{\mQ}$ over the Langlands dual $U_q'({^L}\g^{(2)})$ of $U_q'(\g^{(2)})$ 
associated to any twisted adapted class
$[\mQ]$ of $A_{2n-1}$ (resp. $D_{n+1}$).
This results provide partial answers of conjectures of Frenkel-Hernandez on Langlands duality for finite-dimensional
representation of quantum affine algebras.
\end{abstract}

\maketitle

\section{Introduction}
Let $U_q'(\g^{(r)})$ $(r=2,3)$ be the twisted quantum affine algebra ($\g^{(r)}=A_{2n-1}^{(2)}$,$D_{n+1}^{(2)}$,$E_{6}^{(2)}$,$D_{4}^{(3)}$)
and let $U_q'({^L}\g^{(r)})$ be its untwisted Langlands dual (${}^L\g^{(r)}=B_{n}^{(1)}$,$C_{n}^{(1)}$,$F_{4}^{(1)}$,$G_{2}^{(1)}$) whose generalized Cartan matrix is the transpose of
that of $U_q'(\g^{(r)})$. Let us denote by $\mathcal{W}_{q,t}(\ffg)$ the $(q,t)$-deformed $\mathcal{W}(\ffg)$-algebra associated to the simple Lie subalgebra $\ffg$ of $^{L}\g^{(r)}$,
introduced by Frenkel and Reshetikhin in \cite{FR98}.
Then it is known that (i) the limit $q \to {\rm exp}(\pi i /r)$  of $\mathcal{W}_{q,t}(\ffg)$ recovers the commutative Grothendieck ring of finite-dimensional integrable representations
$\mathcal{C}_{\g^{(r)}}$ over $U_q'(\g^{(r)})$, and (ii) the limit $t \to 1$ of $\mathcal{W}_{q,t}(\ffg)$ recovers the one of $\mathcal{C}_{{^L}\g^{(r)}}$ over $U_q'({^L}\g^{(r)})$ (\cite{FR99}):
\begin{align} \label{eq: known 1}
\xymatrix@R=5ex@C=15ex{ \left [\mathcal{C}_{\g^{(r)}} \right] \ar@/^1.5pc/@{<-->}[rr] & \mathcal{W}_{q,t}(\ffg) \ar@{->}[l]^{{\rm exp}(\pi i /r)  \gets q}
\ar@{->}[r]_{ t \to 1 } & \left[\mathcal{C}_{{^L}\g^{(r)}}\right]}
\end{align}
Thus $\mathcal{W}_{q,t}(\ffg)$ {\em interpolates} the
Grothendieck rings of the categories $\mathcal{C}_{\g^{(r)}}$ and $\mathcal{C}_{{^L}\g^{(r)}}$. Since then,  the duality between the representations
over $U_q'(\g^{(r)})$ and $U_q'({^L}\g^{(r)})$ has been intensively studied (for example, see \cite{FH11A,FH11B}). 
We remark that this duality is related to the geometric Langlands correspondence (see \cite[Introduction]{FH11B}).

On the other hand, Hernandez (\cite{H10}) proved that, for the untwisted quantum affine algebra $U_q'(\g^{(1)})$ ($\g^{(1)}=A_{2n-1}^{(1)}$,$D_{n+1}^{(1)}$,$E_{6}^{(1)}$,$D_{4}^{(1)}$)
corresponding to $U_q'(\g^{(r)})$, the commutative Grothendieck ring of $\mathcal{C}_{\g^{(1)}}$ is isomorphic to the one of $\mathcal{C}_{\g^{(r)}}$:
\begin{align} \label{eq: known 2}
\xymatrix@R=5ex@C=15ex{ \left [\mathcal{C}_{\g^{(r)}} \right] \ar@{<->}[rr]_{\simeq } && \left[\mathcal{C}_{\g^{(1)}}\right]}
\end{align}
Hence, we can expect that the isomorphisms among the Grothendieck groups can be lifted to equivalences of categories:

\begin{align} \label{eq: dias intro}
\raisebox{2em}{\xymatrix@C=.6ex@R=5ex{ && \mathcal{C}_{{^L}\g^{(r)}} \ar@{<-->}[dll]_\sim \ar@{<-->}[drr]^\sim \\
\mathcal{C}_{\g^{(1)}}  \ar@{<-->}[rrrr]^{\sim} && &&   \mathcal{C}_{\g^{(r)}}
}}
\end{align}
However, there is no
satisfactory answer for the reason why such dualities happen.
The goal of paper is to provide new point of view for these dualities through the categorification
theory of quantum groups.

\medskip
The quiver Hecke algebras $R_\fg$, introduced by Khovanov-Lauda \cite{KL09,KL11} and Rouquier \cite{R08} independently, categorify the negative part
$U^-_q(\fg)$ of quantum groups $U_q(\fg)$ for all symmetrizable Kac-Moody algebras $\fg$. The categorification for (dual) PBW-bases and global bases
of the integral form $U_\A^-(\fg)$ of $U^-_q(\fg)$ were developed very actively since the introduction of quiver Hecke algebras.
Among them, \cite{BKM12,Kato12,Mc12} give the categorification theory for (dual) PBW-bases of $U^-_\A(\fg)$ associated to finite simple Lie algebra $\fg$ by using
convex orders on the set of positive roots $\PR$.

On the other hand, Hernandez and Leclerc \cite{HL11} defined a subcategory $\mathcal{C}^{(1)}_Q$ of $\mathcal{C}_{\g^{(1)}}$
for quantum affine algebras $U_q'(\g^{(1)})$ of untwisted affine type $ADE$, which depends on the Auslander-Reiten quiver $\Gamma_Q$ for each Dynkin quiver $Q$ of
finite type $ADE$. They proved that $\mathcal{C}^{(1)}_Q$ categorifies $U^-_\A(\fg)^\vee|_{q=1}$, where $\fg$ is the finite simple Lie subalgebra of $\g^{(1)}$. Furthermore,
they provided the categorification theories for the upper global basis and for the dual PBW-basis {\it associated to $Q$}
via certain sets of modules in $\mathcal{C}^{(1)}_Q$.

For a quantum affine algebra $U_q'(\g)$, the first-named author and his collaborators constructed the quantum affine Schur-Weyl duality functor
$\F : \Rep(R^\Xi) \to \mathcal{C}_\g $
by observing {\it denominator formulas} $d_{V,W}(z)$ of the normalized $R$-matrices $\Rnorm_{V,W}(z)$ between {\it good} modules $V,W \in \mathcal{C}_\g$ (\cite{KKK13A,KKK13B}).
Here $R^\Xi$ is the quiver Hecke algebra determined by Schur-Weyl datum $\Xi$ which depends on the choice of good modules in $\mathcal{C}_\g$
(see \S\;\ref{subsec: SW datum} for details),
and we denote by  $\Rep(R^\Xi)$  the category of finite-dimensional
modules over $R^\Xi$.
In \cite{KKK13B}, they construct an exact functor
\begin{align}\label{eq: CQ1}
\text{$\F^{(1)}_Q : \Rep(R_\fg) \overset{\simeq}{\Lto} \mathcal{C}^{(1)}_Q$
sending simples to simples},
\end{align}
where $\fg=A_n$ or $D_n$ is a finite simple Lie subalgebra of $A_n^{(1)}$ or $D_n^{(1)}$,
and $Q$ is of type $\fg$, respectively.
Furthermore, the authors and their collaborators (\cite{KKKO14S})
defined the subcategory $\mathcal{C}^{(2)}_{Q'}$ of $\mathcal{C}_{\g^{(2)}}$ and constructed {\it twisted analogues} of \eqref{eq: CQ1}:
For any Dynkin quivers $Q$ and $Q'$ of type $A_n$ or $D_n$, we have
\begin{align}\label{eq: CQ1 CQ2}
\scalebox{0.88}{\xymatrix@R=0.5ex@C=6ex{ \mathcal{C}_{\g^{(1)}}  \supset \mathcal{C}^{(1)}_Q \ar@{<->}@/^1.5pc/[rr]^-{\simeq} &
\Rep(R_\fg) \ar@{->}[r]_{\F^{(2)}_{Q'}}^-{\simeq} \ar@{->}[l]^-{\F^{(1)}_{Q}}_{\simeq} & \mathcal{C}^{(2)}_{Q'} \subset \mathcal{C}_{\g^{(2)}}}\
\text{and}\
\scalebox{0.88}{\xymatrix@R=0.5ex@C=6ex{ [\mathcal{C}^{(1)}_Q] \ar@{<->}@/^1.5pc/[rr]^-{\simeq} &
[\Rep(R_\fg)] \simeq U_\A^-(\g)^\vee|_{q=1} \ar@{->}[r]_{\qquad \qquad \simeq } \ar@{->}[l]^{\simeq \qquad \qquad} & [\mathcal{C}^{(2)}_{Q'}]}}}
\end{align}
Here $\fg=\g=A_n$ or $D_n$.
The above result provides the categorification theoretical interpretation
of the similarity between the modules over $\mathcal{C}_{\g^{(1)}}$ and
$\mathcal{C}_{\g^{(2)}}$, described in \eqref{eq: known 2}.

\medskip
In this paper, we define certain subcategory $\mC_{\mQ}$ of $\mathcal{C}_{B_n^{(1)}}$ and $\mathcal{C}_{C_n^{(1)}}$ for any twisted adapted class $[\mQ]$ of
finite type $A_{2n-1}$ and $D_{n+1}$,
and prove that
(1) Grothendieck rings $[\mC_{\mQ}]$ are isomorphic to $[\mathcal{C}^{(t)}_Q]$ $(t=1,2)$ for each Dynkin quiver $Q$ of finite type $A_{2n-1}$
and $D_{n+1}$, respectively,
(2) there exists an exact functor between $\mC_{\mQ}$ and $\mathcal{C}^{(t)}_Q$  $(t=1.2)$ sending simples to simples.
To explain our main result, we need to introduce several notions and previous results.

Let $Q$ be a Dynkin quiver of finite type $ADE$. By the Gabriel theorem (\cite{Gab80}), it is well-known that Auslander-Reiten(AR) quiver $\Gamma_Q$ reflects the representation theory
for the path algebra $\C Q$. Moreover, the vertices of $\Gamma_Q$ can be identified with $\PR$ and the convex partial order $\prec_Q$ of $\PR$ is represented by
the paths in $\Gamma_Q$ (see \cite{B99,Gab80} for more detail). On the other hand, each commutation class $[\redez]$ of reduced expressions for the longest element
$w_0$ of a finite Weyl group determines the convex partial order $\prec_{[\redez]}$ on $\PR$. In particular, each
$\prec_Q$ coincides with the convex partial order {\it induced from} the commutation class $[Q]$ consisting of all reduced expressions {\it adapted to $Q$},
and has its unique Coxeter element $\phi_Q$. Interestingly, all commutation classes $\{ [Q] \}$ are
{\it reflection equivalent} and hence can be grouped into one {\it $r$-cluster point $\lf Q \rf$}. In \cite{OS15}, the second-named author and Suh introduced the combinatorial
AR-quiver $\Upsilon_{[\redez]}$ for every $[\redez]$ of $w_0$ for any finite type to realize the convex partial order $\prec_{[\redez]}$ and
studied the combinatorial properties of $\Upsilon_{[\redez]}$ of type $A$.

In the papers \cite{Oh14A,Oh14D,Oh15E}, the second-named author proved that important information on the representation theories
for $U_q'(A^{(1)}_n)$ and $U_q'(D^{(1)}_n)$ are encoded in the AR-quiver $\Gamma_Q$ in the following sense:
\be[(1)]
\item
 In \cite{Oh14A,Oh14D}, he proved that
the conditions for
\begin{align}
\label{eq: Dorey CP}{\rm Hom}(V(\varpi_i)_x\otimes V(\varpi_j)_y, V(\varpi_k)_z) \ne 0,
\end{align}
can be interpreted as the {\it coordinates} of  $(\alpha,\beta,\gamma)$ in some $\Gamma_Q$ where $\alpha+\beta=\gamma \in \PR$ and $\g$ is of type $A^{(1)}_{n}$ and $D^{(1)}_{n}$.
Here the conditions in \eqref{eq: Dorey CP} are referred as Dorey's rule for quantum affine algebras of type $A^{(1)}_{n}$, $D^{(1)}_{n}$, $B^{(1)}_{n}$ and $C^{(1)}_{n}$
and studied by Chari-Pressley (\cite{CP96}) by using Coxeter elements and twisted Coxeter elements.
\item By using the newly introduced notions on the sequences of positive roots in \cite{Oh15E}, he proved that we can {\it read} the denominator formulas $d_{k,l}(z)$ for
$U_q'(A_n^{(1)})$ and $U_q'(D_n^{(1)})$ from {\it any} $\Gamma_Q$.
\ee

In \cite{OS16B,OS16C}, to extend the previous results to quantum affine algebras of type $B^{(1)}_{n}$ and $C^{(1)}_{n}$, the second-named author and Suh
developed the twisted analogues by using twisted Coxeter elements $\widetilde{\phi}$ of type $A_{2n-1}$ and $D_{n+1}$ associated to Dynkin diagram automorphisms
\begin{align}
&\ba{ll}
& \xymatrix@R=0.5ex@C=4ex{ *{\circ}<3pt>  \ar@{<->}@/^1pc/[rrrrr] \ar@{-}[r]_<{1 \ \ }  &*{\circ}<3pt> \ar@{<->}@/^/[rrr]
\ar@{-}[r]_<{2 \ \ }  &   {}
\ar@{.}[r] & *{\circ}<3pt> \ar@{-}[r]_>{\,\,\ 2n-2} &*{\circ}<3pt>\ar@{-}[r]_>{\,\,\,\, 2n-1} &*{\circ}<3pt> }
\hs{3ex}\longrightarrow\hs{2ex}
\xymatrix@R=0.5ex@C=4ex{ *{\circ}<3pt>  \ar@{-}[r]_<{1 \ \ }  &*{\circ}<3pt>
\ar@{-}[r]_<{2 \ \ }  &   {}
\ar@{.}[r] & *{\circ}<3pt> \ar@{-}[r]_>{\,\,\ n-1} &*{\circ}<3pt>\ar@{=>}[r]_>{\,\,\,\, n} &*{\circ}<3pt> }, \\
&\hs{12ex}\text{$A_{2n-1}$}\hs{35ex}\text{$B_n$} \allowdisplaybreaks \\[2ex]
& \raisebox{1em}{\xymatrix@R=0.5ex@C=4ex{
& & & & & *{\circ}<3pt>\ar@{-}[dl]^<{ \  n} \ar@{<->}@/^1pc/[dd] \\
*{\circ}<3pt>   \ar@{-}[r]_<{1 \ \ }  &*{\circ}<3pt>
\ar@{-}[r]_<{2 \ \ }  &   {}
\ar@{.}[r] & *{\circ}<3pt> \ar@{-}[r]_>{\,\,\ n-1} &*{\circ}<3pt>\ar@{}[l]^>{\,\,\,\, n-2} \\
& & &  & & *{\circ}<3pt>\ar@{-}[ul]^<{\quad \ \  n+1} \\
}}
\hs{3ex}\longrightarrow\hs{2ex}
\xymatrix@R=0.5ex@C=4ex{ *{\circ}<3pt>  \ar@{-}[r]_<{1 \ \ }  &*{\circ}<3pt>
\ar@{-}[r]_<{2 \ \ }  &   {}
\ar@{.}[r] & *{\circ}<3pt> \ar@{-}[r]_>{\,\,\,\ n-1} &*{\circ}<3pt>\ar@{<=}[r]_>{\,\,\,\, n} &*{\circ}<3pt> }.\\[-1ex]
&\hs{12ex}\text{$D_{n+1}$}\hs{35ex}\text{$C_n$}
\ea\label{dia:fold}
\end{align}
They characterized the $r$-cluster point $\lf \mQ \rf$ {\it arising} from {\it any} twisted Coxeter element $\widetilde{\phi}$
in terms of {\it Coxeter composition} (see Definition \ref{def:foldable} and \cite[Observation 4.1]{OS16B}).
We call the commutation classes $[\mQ]$ in $\lf \mQ \rf$ {\it twisted adapted classes}. (Hence the notation $\mQ$
in this paper can be understood as the label of the combinatorial AR-quiver $\Upsilon_{[\mQ]}$ for each $[\mQ]$ in $\lf \mQ \rf$.)
Moreover, by assigning coordinate system
to $\Upsilon_{[\mQ]}$ and folding $\Upsilon_{[\mQ]}$, they proved that (1$'$) Dorey's rule for quantum affine algebras of type
$B^{(1)}_{n}$ and $C^{(1)}_{n}$ can be interpreted as
the {\it coordinates} of  $(\alpha,\beta,\gamma)$ in some {\it folded AR-quiver} $\widehat{\Upsilon}_{[\mQ]}$
where $(\alpha,\beta)$ is a {\it $[\mQ]$-minimal pair} of $\gamma$, (2$'$) we can {\it read} the denominator formulas $d_{k,l}(z)$ for
$U_q'(B_n^{(1)})$ and $U_q'(C_n^{(1)})$ from {\it any} $\widehat{\Upsilon}_{[\mQ]}$.

\medskip

With the previous results at hand, we first introduce the subcategory $\mC_\mQ$ for $U_q'(B_n^{(1)})$ and
$U_q'(C_n^{(1)})$ by considering the coordinates of positive roots in $\widehat{\Upsilon}_{[\mQ]}$,
where $[\mQ]$ is a twisted adapted class of type $A_{2n-1}$ and $D_{n+1}$ respectively (Definition \ref{def: CmQ}).
By considering the denominator formulas for $U_q'(B^{(1)}_{n})$
(resp. $U_q'(C^{(1)}_{n})$) and the coordinate system of
$\widehat{\Upsilon}_{[\mQ]}$, we can take the Schur-Weyl datum $\Xi$ for each twisted adapted class $[\mQ]$ yielding the exact functor
$$ \F_\mQ : \Rep(R_\fg) \Lto \mC_\mQ \subset \mC_\g$$
where $\fg$ is the finite simple Lie algebra of type $A_{2n-1}$ and $D_{n+1}$,
and $\g=B^{(1)}_{n}$ and $C^{(1)}_{n}$, respectively (Theorem \ref{thm: exact functor}).
Furthermore, by applying the correspondence between Dorey's rule and $[\mQ]$-minimal pair, we can prove that the functor
$\F_\mQ$ {\it sends simples to simples} (Theorem \ref{thm: simples to simples}). Thus we have Langlands analogues of \eqref{eq: CQ1 CQ2}:
\begin{align}\label{eq: CQ2 Langlands}
\scalebox{0.89}{\xymatrix@R=0.5ex@C=6ex{ \mathcal{C}_{\g^{(2)}}  \supset \mathcal{C}^{(2)}_Q \ar@{<->}@/^1.5pc/[rr]^-{\simeq} &
\Rep(R_\fg) \ar@{->}[r]_{\F_{\mQ}}^-{\simeq} \ar@{->}[l]^-{\F^{(2)}_{Q}}_{\simeq} & \mC_{\mQ} \subset \mathcal{C}_{{}^L\g^{(2)}}}\
\text{and}\
\scalebox{0.89}{\xymatrix@R=0.5ex@C=6ex{ [\mathcal{C}^{(2)}_Q] \ar@{<->}@/^1.5pc/[rr]^-{\simeq} &
[\Rep(R_\fg)] \simeq U_\A^-(\g)^\vee|_{q=1} \ar@{->}[r]_{\qquad \qquad \simeq } \ar@{->}[l]^{\simeq \qquad \qquad} & [\mC_{\mQ}]}}}
\end{align}
Hence we have
\begin{align}
\ba{c}\scalebox{0.9}{\xymatrix@C=.6ex@R=5ex{ &&\hs{7ex}
\mC_\mQ \subset \mathcal{C}_{{}^L\g^{(2)}}\ar@{<.>}[ddrr]\ar@{<.>}[ddll]\\
&& \Rep(R_{\fg}) \ar[u]^-{\F_{\mQ}}\ar[drr]_-{\F^{(2)}_{Q'}}\ar[dll]^-{\F^{(1)}_{Q}} \\
\mathcal{C}_{\g^{(1)}}  \supset \mathcal{C}^{(1)}_{Q} \ar@{<.>}[rrrr] && && \hs{2ex}\mathcal{C}^{(2)}_{Q'}
\subset   \mathcal{C}_{\g^{(2)}},
}} \
\scalebox{0.9}{\xymatrix@C=6ex@R=5ex{ & [\mC_\mQ] \ar@{<->}[ddr]^{\simeq}\ar@{<->}[ddl]_{\simeq}  \\
&  U^-_\A(\fg)^\vee|_{q=1} \ar@{<->}[u]^{\simeq}\ar@{<->}[dr]_{\simeq}\ar@{<->}[dl]^{\simeq} \\
[\mathcal{C}^{(1)}_{Q}] \ar@{<->}[rr]_{\simeq} &&  [\mathcal{C}^{(2)}_{Q'}].}}
\ea
\end{align}
This result implies partial answers for conjectures suggested by Frenkel-Hernandez in the following sense:
\begin{eqnarray*} &&
\parbox{85ex}{
For a representation $V$ in $\mathcal{C}^{(2)}_{Q}$, it has a Langlands dual representation ${^L}V$ in $\mC_\mQ$ via induced functor $\F_\mQ \circ {F^{(2)}_Q}^{-1}$
for any $[Q]$ and $[\mQ]$. In particular if $V$ is simple, so is ${^L}V$
}
\end{eqnarray*}
(see \cite[Conjecture 2.2, Conjecture 2.4, Conjecture 3.10]{FH11A}).

As an application, we can characterize the sets of modules in $\mC_\mQ$ categorifying the upper global basis and the dual PBW-basis associated to $[\mQ]$ of $U_\A(\fg)^\vee|_{q=1}$
(Corollary \ref{cor: PBW upper global}). In Section \ref{sec: Simple head and socle}, we continue the study of \cite{Oh15E} about the intersection
of the dual PBW-basis $P_{[\redez]}$ associated to a commutation class $[\redez]$ and the upper global basis $B(\infty)$. More precisely, in \cite[Corollary 5.24]{Oh15E},
the second-named author proved
that an element $\mathbf{b} \in P_{[Q]} \cap B(\infty)$ if and only if $\mathbf{b}$ corresponds to a $[Q]$-simple sequence $\um$, for any $[Q] \in \lf Q\rf$.
In this paper, we also prove that
\begin{eqnarray*} &&
\parbox{85ex}{
an element $\mathbf{b} \in P_{[\mQ]} \cap B(\infty)$ if and only if $\mathbf{b}$ corresponds to a $[\mQ]$-simple sequence $\um$, for any $[\mQ] \in \lf \mQ\rf$
(Corollary \ref{cor: simple simple}).}
\end{eqnarray*}

Now we suggest the following conjecture:

\begin{conjecture}
An element $\mathbf{b} \in P_{[\redez]} \cap B(\infty)$
if and only if $\mathbf{b}$ corresponds to a $[\redez]$-simple sequence $\um$, for any $[\redez]$ of $w_0$.
\end{conjecture}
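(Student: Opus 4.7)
The plan is to work within the quiver Hecke algebra categorification $[R_\fg\text{-mod}] \simeq U_\A^-(\fg)^\vee|_{q=1}$, under which simple modules correspond to the upper global basis $B(\infty)$ and certain ordered convolutions of cuspidal modules correspond to the dual PBW basis. By the cuspidal-module theory of McNamara, Kato and Brundan--Kleshchev--McNamara, which is valid for an \emph{arbitrary} convex order on $\PR$, every commutation class $[\redez]$ of $w_0$ yields a family of self-dual real simple cuspidal modules $\{S_{[\redez]}(\alpha)\}_{\alpha\in\PR}$, and for every sequence $\um$ indexed by $\PR$ the convolution
$$ \Stom \seteq S_{[\redez]}(\beta_1)^{\conv m_1} \conv \cdots \conv S_{[\redez]}(\beta_N)^{\conv m_N}, $$
taken in the $[\redez]$-order $\beta_1 \prec_{[\redez]} \cdots \prec_{[\redez]} \beta_N$, represents the PBW basis element $P_{[\redez]}(\um)$ up to a power of $q$. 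Hence $\mathbf{b}=P_{[\redez]}(\um)\in B(\infty)$ is equivalent to $\Stom$ being simple, and the conjecture reduces to proving the intrinsic combinatorial/categorical criterion
$$ \Stom \text{ is simple} \iff \um \text{ is a $[\redez]$-simple sequence.} $$

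For the direction ($\Leftarrow$) I would argue by induction on $|\um|=\sum_\beta m_\beta$ using the standard R-matrix simplicity criterion: a convolution $M\conv N$ of real simple modules is simple if and only if the normalized R-matrix $\rmat{M,N}$ is invertible, equivalently if and only if both $\mathrm{Hom}(M\conv N,N\conv M)$ and $\mathrm{Hom}(N\conv M,M\conv N)$ are one-dimensional. The definition of a $[\redez]$-simple sequence should be arranged precisely so as to preclude any $[\redez]$-minimal pair $(\alpha,\beta)$ in the support of $\um$ that could witness a reducible binary sub-convolution, and this should translate into iterated invertibility of all R-matrices appearing in the induction, exactly as in \cite[Cor.~5.24]{Oh15E} for adapted classes and in its twisted-adapted analogue Corollary~\ref{cor: simple simple}. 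For the converse ($\Rightarrow$), one has to produce, for every $\um$ that is not $[\redez]$-simple, an explicit $[\redez]$-minimal pair together with a nonzero intertwiner exhibiting $\Stom$ as a nontrivial extension of cuspidally smaller modules; the pair itself should be read directly off a ``crossing'' configuration in the combinatorial AR-quiver $\Upsilon_{[\redez]}$ of \cite{OS15}.

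The main obstacle is the converse direction for a commutation class $[\redez]$ that is neither adapted nor twisted adapted. In the adapted and twisted-adapted cases one has the AR-quiver $\GQ$ or the folded AR-quiver $\WUp_{[\mQ]}$ together with a Schur--Weyl functor $\F^{(t)}_Q$ or $\F_\mQ$ into a category of quantum affine modules, and the minimal pairs are detected by the denominator formulas of normalized R-matrices between fundamental representations; for a general $[\redez]$ no such quantum-affine avatar is available, and the combinatorics of $\Upsilon_{[\redez]}$ must be analysed purely intrinsically. I would attempt to circumvent this by studying the behaviour of both sides under the elementary moves that connect commutation classes: commutation moves, Kato's reflection functors at sinks and sources, and braid moves. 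Commutation and reflection moves already cover each $r$-cluster point containing an adapted or twisted adapted class, so the essential new content is braid-move transitions between distinct cluster points. I expect each such braid move to be shadowed by a categorified mutation of cuspidal modules under which the ``no $[\redez]$-minimal pair'' condition and the simplicity of $\Stom$ transform compatibly; the general case would then follow by induction on the graph-distance of $[\redez]$ from an adapted or twisted-adapted representative in the braid-move graph of commutation classes.
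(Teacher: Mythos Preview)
The statement you are attempting to prove is labelled a \textbf{conjecture} in the paper; the authors do not prove it and explicitly present it as open. There is therefore no ``paper's own proof'' to compare your attempt against. What the paper does establish is the special cases $[\redez]=[Q]$ (adapted, via \cite[Corollary~5.24]{Oh15E}) and $[\redez]=[\mQ]$ (twisted adapted, via Theorem~\ref{thm: simple simple} and Corollary~\ref{cor: simple simple}), and then \emph{suggests} the general statement.

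Your reduction of the problem to ``$\Stom$ is simple $\iff$ $\um$ is $[\redez]$-simple'' is correct, and you are right that the direction ``$[\redez]$-simple $\Rightarrow$ simple module'' is essentially known for arbitrary convex orders from the McNamara/BKM theory together with the real-simple machinery of \cite{KKKO14S}. The genuine difficulty is the converse, and here your proposal does not close the gap: you reduce it to showing that simplicity of $\Stom$ and $[\redez]$-simplicity of $\um$ transform compatibly under braid moves between $r$-cluster points, and then say you ``expect'' such a braid move to be shadowed by a categorified mutation of cuspidal modules. But this expectation \emph{is} the open problem. Under a braid move $s_is_js_i\leftrightarrow s_js_is_j$ the cuspidal modules for the three affected roots are genuinely reshuffled (not merely permuted), the convex orders on either side are not related by any functor on $\Rep(R)$ that one presently knows how to write down, and there is no available result asserting that the set of simple $\Stom$'s is stable under such a move. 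Your induction on braid-move distance therefore has no base for the inductive step. A proof along these lines would require new structural input---either a direct combinatorial classification of minimal pairs in an arbitrary $\Upsilon_{[\redez]}$ analogous to \eqref{eq: Dorey folded coordinate Bn}--\eqref{eq: Dorey folded coordinate Cn}, or a genuinely new categorical tool tracking cuspidals across braid moves---neither of which you have supplied.
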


In Appendix, we propose several conjectures on the dualities among quantum affine algebras $U_q'(E^{(i)}_6)$ $(i=1,2)$ and
$U_q'(F^{(1)}_4)$ by applying the same framework of this paper. 

\section{Cluster points and their AR-quivers with coordinates}

Let $I$ be an index set. A \emph{symmetrizable Cartan datum} is a quintuple $(\cmA,\wl,\Pi,\wl^{\vee},\Pi^{\vee})$ consisting of
{\rm (a)} a \emph{symmetrizable generalized Cartan matrix} $\cmA=(a_{ij})_{i,j \in I}$,
{\rm (b)} a free abelian group $\wl$, called the \emph{weight lattice},
{\rm (c)} $\Pi= \{ \alpha_i \in \wl \mid \ i \in I \}$, called
the set of \emph{simple roots},
{\rm (d)} $\wl^{\vee}\seteq {\rm Hom}(\wl, \Z)$, called the \emph{coweight lattice},
{\rm (e)} $\Pi^{\vee}= \{ h_i \ | \ i \in I \}\subset P^{\vee}$, called
the set of \emph{simple coroots}.
It satisfies certain conditions$\colon$
$\lan h_i,\al_j\ran=a_{ij}$, etc (see \cite[\S 1.1]{KKKOIII} for precise definition).

The free abelian group $\rl\seteq\oplus_{i \in I} \Z \alpha_i$ is called the
\emph{root lattice}. Set $\rl^{+}= \sum_{i \in I} \Z_{\ge 0}
\alpha_i$. For $\mathsf{b}=\sum_{i\in I}m_i\al_i\in\rl^+$,
we set $\het(\mathsf{b})=\sum_{i\in I}m_i$.

We denote by $U_q(\mathsf{g})$ the {\em quantum group} associated to a symmetrizable Cartan datum which is generated by $e_i,f_i$ $(i \in I)$ and
$q^{h}$ $(h \in \wl^\vee)$.

\subsection{Foldable $r$-cluster points}
Let us consider the Dynkin diagrams $\Delta$ of finite type, labeled by an index set $I$, and their automorphisms $\vee$.
By the Dynkin diagram automorphisms $\vee$,
we can obtain the Dynkin diagrams of finite type $BCFG$ as orbits of $\vee$$\colon$
\begin{align}
B_n \ (n \ge 2) \quad  &\longleftrightarrow \quad
\big( A_{2n-1}: \xymatrix@R=0.5ex@C=4ex{ *{\circ}<3pt> \ar@{-}[r]_<{1 \ \ }  &*{\circ}<3pt>
\ar@{-}[r]_<{2 \ \ }  &   {}
\ar@{.}[r] & *{\circ}<3pt> \ar@{-}[r]_>{\,\,\,\ 2n-2} &*{\circ}<3pt>\ar@{-}[r]_>{\,\,\,\, 2n-1} &*{\circ}<3pt> }, \ i^\vee = 2n-i \big) \label{eq: B_n} \allowdisplaybreaks \\
C_n \ (n \ge 3) \quad  &\longleftrightarrow \quad
\left( D_{n+1}: \raisebox{1em}{\xymatrix@R=0.5ex@C=4ex{
& & &  *{\circ}<3pt>\ar@{-}[dl]^<{ \  n} \\
*{\circ}<3pt> \ar@{-}[r]_<{1 \ \ }  &*{\circ}<3pt>
\ar@{.}[r]_<{2 \ \ } & *{\circ}<3pt> \ar@{.}[l]^<{ \ \ n-1}  \\
& & &   *{\circ}<3pt>\ar@{-}[ul]^<{\quad \ \  n+1} \\
}}, \ i^\vee = \begin{cases} i & \text{ if } i \le n-1, \\ n+1 & \text{ if } i = n, \\ n & \text{ if } i = n+1. \end{cases} \right) \label{eq: C_n} \allowdisplaybreaks \\
F_4 \quad  &\longleftrightarrow \quad
\left( E_{6}: \raisebox{2em}{\xymatrix@R=3ex@C=4ex{
& & *{\circ}<3pt>\ar@{-}[d]_<{\quad \ \  6} \\
*{\circ}<3pt> \ar@{-}[r]_<{1 \ \ }  &
*{\circ}<3pt> \ar@{-}[r]_<{2 \ \ }  &
*{\circ}<3pt> \ar@{-}[r]_<{3 \ \ }  &
*{\circ}<3pt> \ar@{-}[r]_<{4 \ \ }  &
*{\circ}<3pt> \ar@{-}[l]^<{ \ \ 5 } }}, \begin{cases} 1^\vee=5, \ 5^\vee=1 \\ 2^\vee=4, \ 4^\vee=2, \\ 3^\vee=3, \ 6^\vee=6 \end{cases}  \right) \label{eq: F_4} \allowdisplaybreaks \\
G_2 \quad  &\longleftrightarrow \quad
\left( D_{4}: \raisebox{1em}{\xymatrix@R=0.5ex@C=4ex{
& &   *{\circ}<3pt>\ar@{-}[dl]^<{ \ 3} \\
*{\circ}<3pt> \ar@{-}[r]_<{1 \ \ }  &*{\circ}<3pt>
\ar@{-}[l]^<{2 \ \ }   \\
& &    *{\circ}<3pt>\ar@{-}[ul]^<{\quad \ \  4} \\
}}, \ \begin{cases} 1^\vee=3, \ 3^\vee=4, \ 4^\vee=1, \\ 2^\vee=2. \end{cases} \right) \label{eq: G_2}
\end{align}

Let $\W$ be the Weyl group, generated by simple reflections
$( s_i \ | \ i \in I)$ corresponding to $\Delta$,
and $w_0$ the longest element of $\W$. We denote by
$^*$, the involution on $I$ defined by
\eq
&&w_0(\al_i)=-\al_{i^*}.\label{eq:invI}
\eneq
 We also denote by $\PR$ the set of all positive roots.

\begin{definition}
We say that two reduced expressions
$\widetilde{w}=s_{i_1}s_{i_2}\cdots s_{i_{\ell}}$ and
$\widetilde{w}'=s_{j_1}s_{j_2}\cdots s_{j_{\ell}}$ of $w
\in \W$ are {\em commutation equivalent}, denoted by $\widetilde{w}
\sim \widetilde{w}' 
$, if
$s_{j_1}s_{j_2}\cdots s_{j_{\ell}}$ is obtained from
$s_{i_1}s_{i_2}\cdots s_{i_{\ell}}$ by applying the commutation
relations $s_{k}s_{l} =s_{l}s_{k}$ ($\lan h_k,\al_l\ran=0$).
We denote by $[\widetilde{w}]$ the commutation equivalence class
of $\widetilde{w}$.
\end{definition}

For each $[\redez]$, there exists a convex partial order $\prec_{[\redez]}$ on
$\Phi^+$, the set of positive roots,
satisfying the following property (see \cite{Oh15E} for details):
For $\al,\beta \in \PR$ with $\al+\beta \in \PR$, we have either
$$ \al \prec_{[\redez]}  \al+\beta \prec_{[\redez]} \beta \quad \text{ or } \quad \beta \prec_{[\redez]} \al+\beta \prec_{[\redez]} \al.$$

\begin{definition} Fix a Dynkin diagram $\Delta$ of finite type.
For an equivalence class $[\redez]$ of reduced expression $\redez$,
we say that  $i\in I$ is a {\em sink} (resp.\ {\em source}) of $[\redez]$
if there is a reduced expression $\redez'\in [\redez]$ of $w$ starting with $s_i$  (resp.\ ending with $s_i$).
\end{definition}

The following proposition is well-known (for example, see \cite{Kato12,OS15}):

\begin{proposition} For $\redez=s_{i_1}s_{i_2}\cdots s_{i_{\N-1}}s_{i_\N}$ with $i_\N^*=i$,
$\redez'=s_{i}s_{i_1}s_{i_2}\cdots s_{i_{\N-1}}$ is a reduced expression of $w_0$ and $[\redez'] \ne [\redez]$. Similarly,
$\redez''=s_{i_2}\cdots s_{i_{\N-1}}s_{i_\N}s_{i^*_1}$ is a reduced expression of $w_0$
and $[\redez''] \ne [\redez]$.
\end{proposition}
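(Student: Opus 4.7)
The plan is to prove each half of the proposition in two stages: first, that $\redez'$ (respectively $\redez''$) is a reduced expression for $w_0$; second, that its commutation class differs from $[\redez]$. The first stage will be a short Weyl-group computation based on the identity $s_{j^*}w_0=w_0 s_j$ coming from \eqref{eq:invI}, while the second, more substantive stage will exploit the convex partial order $\prec_{[\redez]}$ on $\PR$.

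For the reducedness of $\redez'$, I would begin by rewriting $s_{i_1}\cdots s_{i_{\N-1}}=w_0 s_{i_\N}$, then use $s_i=s_{i_\N^*}$ together with $s_{i_\N^*}w_0=w_0 s_{i_\N}$ to deduce $s_i\cdot s_{i_1}\cdots s_{i_{\N-1}}=w_0$. Since $\redez'$ is an $\N$-letter word realizing $w_0$, it must be reduced. A symmetric manipulation, via $s_{i_2}\cdots s_{i_\N}=s_{i_1}w_0$ and $w_0 s_{i_1^*}=s_{i_1}w_0$, handles $\redez''$.

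The substantive step is the inequivalence of commutation classes. My approach will use the standard fact that the first (respectively last) positive root in the root sequence $\beta_k=s_{i_1}\cdots s_{i_{k-1}}(\al_{i_k})$ attached to any representative of $[\redez]$ is a minimum (respectively maximum) of $\prec_{[\redez]}$. Applied to $\redez$, whose last letter is $s_{i_\N}$, this identifies $\al_{i_\N^*}=\al_i$ as a maximum of $\prec_{[\redez]}$; applied to $\redez'$, whose first letter is $s_i$, it identifies $\al_i$ as a minimum of $\prec_{[\redez']}$. Assuming $[\redez']=[\redez]$ would force $\al_i$ to be simultaneously a minimum and a maximum, hence incomparable with every other element of $\PR$. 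I would derive a contradiction by choosing an index $j\in I$ adjacent to $i$ in the Dynkin diagram, so that $\al_i+\al_j\in\PR$; the convexity property of $\prec_{[\redez]}$ recalled in the paper then forces $\al_i$ and $\al_i+\al_j$ to be comparable. The argument for $[\redez'']\ne[\redez]$ is entirely symmetric: the last letter of $\redez''$ being $s_{i_1^*}$ produces last root $\al_{(i_1^*)^*}=\al_{i_1}$, making $\al_{i_1}$ a maximum of $\prec_{[\redez'']}$, while it is also the first root of $\redez$ and hence a minimum of $\prec_{[\redez]}$.

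The main obstacle I anticipate is not computational but bibliographic: the characterization of the first/last roots of a representative as the extremal elements of $\prec_{[\redez]}$ is standard but not stated in the excerpt, so I would invoke it by citation to \cite{Oh15E,OS15}. The argument implicitly assumes the underlying Dynkin diagram is connected of rank at least two, which is the relevant setting; otherwise there would be no index $j$ adjacent to $i$ to supply the contradicting positive root $\al_i+\al_j$.
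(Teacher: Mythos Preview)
The paper does not actually prove this proposition: it is stated as well-known, with a pointer to \cite{Kato12,OS15}, and no argument is given. So there is nothing in the paper to compare your proof against.

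That said, your proof is correct. The reducedness computation via $s_{j^*}w_0=w_0s_j$ is the standard one, and your identification of the last root $\beta_\N^{\redez}=s_{i_1}\cdots s_{i_{\N-1}}(\al_{i_\N})=w_0(-\al_{i_\N})=\al_{i_\N^*}=\al_i$ is accurate. The key step---that $\al_i$ would have to be simultaneously minimal and maximal for $\prec_{[\redez]}$ if $[\redez']=[\redez]$, contradicting convexity applied to $\al_i$ and $\al_i+\al_j$ for $j$ adjacent to $i$---is clean and requires nothing beyond the description of $\prec_{[\redez]}$ already recalled in the paper. Your caveat about connectedness and rank $\ge 2$ is appropriate: in rank one the statement is vacuously false, but every Dynkin diagram under consideration in the paper has rank at least two.
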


\begin{definition}[{\cite{OS15}}]
The right action of the reflection functor $r_i$ on $[\redez]$ is defined by
$$[\redez]\, r_i =
\bc
[(s_{i_2},\cdots, s_{i_N}, s_{i^*})] & \text{if there is $\redez'=(s_{i}, s_{i_2}, \cdots, s_{i_N})\in [\redez]$,}\\
\ [\redez] &  \text{otherwise.}
\ec
$$
On the other hand, the left right action of the reflection functor $r_i$ on $[\redez]$ is defined by
$$
r_i \, [\redez]=
\bc
[(s_{i^*},s_{i_1}\cdots, s_{i_{N-1}})] &  \text{if there is $\redez'=(s_{i_1}, \cdots, s_{i_{N-1}}, s_i)\in [\redez]$,}\\
\ [\redez] &  \text{otherwise.}
\ec
$$
\end{definition}

\begin{definition}[\cite{OS15}]  \label{def: ref equi}
 Let  $[\redez]$ and $[\redez']$ be two commutation classes. We say $[\redez]$ and $[\redez']$ are
{\em reflection equivalent} and write $[\redez]\overset{r}{\sim} [\redez']$ if $[\redez']$ can be obtained from $[\redez]$ by a
sequence of reflection maps. The equivalence class
$\lf \redez \rf \seteq \{\, [\redez]\, |\,
[\redez]\overset{r}{\sim}[\redez']\, \}$
with respect to the reflection equivalence relation  is called an {\em $r$-cluster point}.
\end{definition}

\begin{definition}[{\cite[Definition 1.8]{OS16B}}]
Fix a Dynkin diagram automorphism $\vee$ of $I$.
Let $I^\vee \seteq \{ \overline{i} \ | \ i \in I \}$ be the orbit classes of $I$ induced by $\vee$. For an $r$-cluster point
$\lf \redez \rf = \lf s_{i_1} \cdots s_{i_{\mathsf{N}}} \rf$ of $w_0$ and $\ok \in I^\vee$, define
$$\mathsf{C}^\vee_{\lf \redez \rf}(\ok) = |\{ i_s \ | \ i_s \in \ok, \  1 \le s \le \mathsf{N} \} |  \quad (\mathsf{N}=\ell(w_0)).$$
%
We call the composition
$\mathsf{C}^{\vee}_{\lf\redez\rf}=\big (\mathsf{C}^\vee_{\lf\redez\rf}(\overline{1}),\ldots, \mathsf{C}^{\vee}_{\lf\redez\rf} (\overline{|I^\vee|}) \big)$, 
the {\em $\vee$-Coxeter composition of $\lf \redez \rf$}.
%
\end{definition}

\begin{example}
For $\redez=s_1s_2s_3s_5s_4s_3s_1s_2s_3s_5s_4s_3s_1s_2s_3$ of type $A_5$, we have
$$\mathsf{C}^{\vee}_{\lf\redez\rf}=(5,5,5).$$
\end{example}

\begin{definition}  [{\cite[Definition 1.10]{OS16B}}] \label{def:foldable}
For an automorphism $\vee$ and an cluster
 $\lf \redez \rf$ of type $ADE$,
we say that an $r$-cluster point $\lf \redez \rf$ is {\em $\vee$-foldable} if
$$\mathsf{C}^{\vee}_{\lf \redez \rf}(\ok) = \mathsf{C}^{\vee}_{\lf \redez \rf}(\ol)  \qquad \text{ for any } \ok,\ol \in I^\vee.$$
\end{definition}

In \cite{OS16B,OS16C}, the existence for a $\vee$-foldable $r$-cluster point is proved, but we do not know whether it is unique or not:

\begin{proposition} [\cite{OS16B,OS16C}] \hfill
\begin{enumerate}
\item[{\rm (a)}] A $\vee$-foldable $r$-cluster point exists,
and is denoted by $\lf \mQ \rf$.
\item[{\rm (b)}] The number of commutation classes in each $\lf \mQ \rf$ is equal to $2^{|I|-|\vee|} \times |\vee|$,
where $|\vee|$ denotes the order of\/ $\vee$.
\end{enumerate}
\end{proposition}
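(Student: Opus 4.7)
\medskip

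The plan is to prove (a) by producing an explicit $\vee$-foldable $r$-cluster point via a twisted Coxeter construction, and then to deduce (b) by analyzing the orbit of this class under the reflection action.

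For (a), I would first fix a choice of $\vee$-orbit representatives $i_1,\dots,i_r\in I$ (where $r=|I^\vee|$), ordered so that the corresponding simple reflections already form a reduced word of a Coxeter element of the folded type. Associated to this is a twisted Coxeter element $\widetilde{\phi}=s_{i_1}s_{i_2}\cdots s_{i_r}\cdot\vee$. The standard theory of twisted Coxeter elements (going back to Springer) gives that sufficiently many iterations
\[
s_{i_1}\cdots s_{i_r}\,s_{\vee(i_1)}\cdots s_{\vee(i_r)}\,s_{\vee^2(i_1)}\cdots
\]
produce a reduced expression $\redez$ of $w_0$. By construction this word consists of $\mathsf{N}/r$ blocks, each block being a transform of $(s_{i_1},\dots,s_{i_r})$ by a power of $\vee$, so every $\vee$-orbit $\overline{k}\in I^\vee$ is visited the same number of times. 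Hence $\mathsf{C}^\vee_{\lf\redez\rf}(\overline{k})=\mathsf{N}/r$ is independent of $\overline{k}$, and $\lf\redez\rf$ is $\vee$-foldable.

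For (b), I would analyze how the reflection functors $r_i$ act on commutation classes inside a single $r$-cluster point. Passing through the combinatorial AR-quiver $\Upsilon_{[\mQ]}$, the right action $[\mQ]\,r_i$ corresponds to moving a source of $\Upsilon_{[\mQ]}$ to the bottom, and analogously on the left for sinks. To count the classes in $\lf\mQ\rf$, I would show that a class $[\mQ']\in\lf\mQ\rf$ is uniquely determined by two pieces of data: (i) a $\vee$-equivariant ``orientation'' on the folded Dynkin diagram $\Delta/\vee$, with a binary choice at each of the $|I|-|\vee|$ residue-edges that are moved by the reflection action independently (the fixed-point edges contribute no freedom), and (ii) a choice of element of the cyclic group $\langle\vee\rangle$ of order $|\vee|$ that records the ``phase'' of the twisted Coxeter seed used in (a). This gives an injection from $\lf\mQ\rf$ into a set of size $2^{|I|-|\vee|}\cdot|\vee|$, and surjectivity follows from the observation that all such data are realized by some iterate of the construction in (a).

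The main obstacle is precisely step (ii) of the counting: showing that the reflection equivalence generated by $r_i$ identifies exactly the expected classes and no more, i.e., that the two labeling data are genuinely independent and that no additional coincidences arise. I expect this to be the technical heart of the argument; a uniform proof would analyze carefully how $r_i$ interacts with the orbit structure on $I$, whereas a case-by-case verification over the four foldable diagrams \eqref{eq: B_n}--\eqref{eq: G_2} is also feasible since the orders and types involved are all small (and indeed for $D_4$ with $\vee$ of order $3$ the formula $2^1\cdot 3=6$ must be checked against the explicit enumeration of commutation classes).
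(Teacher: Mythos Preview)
The paper does not prove this proposition; it is cited from \cite{OS16B,OS16C} without proof. So there is no ``paper's own proof'' to compare against directly, but the paper does summarize in Section~2.3 and the Remark following the proposition how the argument in those references goes, and your sketch diverges from it in part~(b).

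Your approach to (a) is essentially the one taken: the Remark immediately after the proposition records exactly the twisted-Coxeter construction you describe, writing $\lf\mQ\rf$ as $\bigl[\hspace{-0.3ex}\bigl[\prod_k (s_{j_1}\cdots s_{j_r})^{k\vee}\bigr]\hspace{-0.3ex}\bigr]$ for a twisted Coxeter element $s_{j_1}\cdots s_{j_r}$. So this part is fine.

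For (b), the route in \cite{OS16B,OS16C} (as recounted in Section~2.3) is quite different from your abstract orbit analysis. Rather than parametrizing $[\mQ]$ by an orientation on the folded diagram together with a phase in $\langle\vee\rangle$, they construct an explicit two-to-one map from Dynkin quivers of a \emph{smaller} simply-laced type onto $\lf\mQ\rf$: each $Q$ of type $A_{2n-2}$ yields two classes $[\mQ^>],[\mQ^<]$ in the $A_{2n-1}$ foldable cluster, and each $Q$ of type $A_n$ yields $[\mQ^{\gets n}],[\mQ^{\gets n+1}]$ in the $D_{n+1}$ foldable cluster. The count then reduces to the known $2^{|I'|-1}$ enumeration of ordinary adapted classes (Theorem~\ref{thm: Qs}(5)). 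This is a concrete, type-by-type construction rather than a uniform argument.

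Your proposed parametrization in (b)(i) is where the genuine gap lies. The phrase ``$|I|-|\vee|$ residue-edges that are moved by the reflection action independently'' is not a well-defined combinatorial object: $|I|-|\vee|$ is not the number of edges in $\Delta/\vee$, nor the number of $\vee$-orbits of edges in $\Delta$, and you have not specified what an ``orientation'' on such edges means or why it should be a reflection-invariant of $[\mQ]$. Without making this precise you cannot establish either injectivity or surjectivity of your proposed bijection. You correctly flag this as the obstacle, but as written the sketch does not indicate how to overcome it; the actual proofs proceed instead through the explicit quiver-level correspondences just described.
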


The $r$-cluster point $\lf \mQ \rf$ is called a {\em twisted adapted cluster point} and a class $[\mQ]$ in $\lf \mQ \rf$ is called a {\em twisted adapted class}.

Let $\sigma \in GL(\C\Phi)$ be a linear transformation of finite order
which preserves $\Pi$. Hence $\sigma$ preserves
$\Phi$ itself and normalizes $\W$ and so $\W$ acts by conjugation on the
coset $\W\sigma$.

\begin{definition} \label{def: twisted Coxeter} \hfill
\begin{enumerate}
\item Let $\{ \Pi_{i_1}, \ldots,\Pi_{i_k} \}$ be the all orbits of $\Pi$ in
$\Phi$ with respect to $\sigma$. For each $r\in \{1, \cdots, k \}$, choose $\alpha_{i_r} \in \Pi_{i_r}$ arbitrarily,
and let $s_{i_r} \in \W$ denote the corresponding reflection. Let $w$
be the product of $s_{i_1}, \ldots , s_{i_k}$ in any order. The
element $w\sigma \in \W\sigma$ of $w\in \W$ thus obtained is called a {\it
$\sigma$-Coxeter element}.
\item If $\sigma$ in (1) is $\vee$ in \eqref{eq: B_n}, \eqref{eq: C_n}, \eqref{eq: F_4},
then  {\it $\sigma$-Coxeter element} is also called a {\it twisted Coxeter element}.
\end{enumerate}
\end{definition}

\begin{remark} \hfill
\begin{enumerate}
\item[{\rm (i)}] For the involution of $A_{2n-1}$ in \eqref{eq: B_n}, of $D_{n+1}$ in
\eqref{eq: C_n} and of $E_6$ in \eqref{eq: F_4}, the number of
commutation classes  of each $\lf Q \rf$ and the one of each $\lf \mQ \rf$ are
the same and are equal to $2^{2n-2}$, $2^n$ and $2^5$, respectively.
\item[{\rm (ii)}] For types $A_{2n-1}$, $D_{n+1}$ and $E_6$, $\lf \mQ \rf$ are given as follows:
$$
\lf \mQ \rf =\begin{cases}
\displaystyle \left[\hspace{-0.7ex}\left[\prod_{k=0}^{2n-2} (s_{j_1}s_{j_2}s_{j_3}\cdots s_{j_n})^{k\vee}\right]\hspace{-0.7ex}\right]  & \text{ if $\Delta$ is of type $A_{2n-1}$}, \\[3ex]
\displaystyle \left[\hspace{-0.7ex}\left[\prod_{k=0}^{n} (s_{j_1}s_{j_2}s_{j_3}\cdots s_{j_n})^{k\vee}\right]\hspace{-0.7ex}\right] & \text{ if $\Delta$ is of type $D_{n+1}$},\\[3ex]
\displaystyle \left[\hspace{-0.7ex}\left[\prod_{k=0}^{8} (s_{j_1}s_{j_2}s_{j_3}s_{j_4})^{k\vee}\right]\hspace{-0.7ex}\right] & \text{ if $\Delta$ is of type $E_{6}$},
\end{cases}
$$
where
\begin{itemize}
\item $s_{j_1}s_{j_2}s_{j_3}\cdots s_{j_n}$ is an arbitrary twisted Coxeter element of type $A_{2n-1}$ (resp. $D_{n+1}$ and $E_6$),
\item $\vee$ is given in \eqref{eq: B_n} (resp. \eqref{eq: C_n} and \eqref{eq: F_4}),
\item $(s_{j_1} \cdots s_{j_n})^\vee \seteq s_{j^\vee_1} \cdots s_{j^\vee_n}$ and $(s_{j_1} \cdots s_{j_n})^{k \vee} \seteq(\cdots((s_{j_1} \cdots s_{j_n} \underbrace{ )^\vee )^\vee \cdots
 )^\vee}_{\text{ $k$-times}}$.
\end{itemize}
Note that $s_1s_2s_3\cdots s_{n}$ is a twisted Coxeter element of type $A_{2n-1}$, $D_{n+1}$ and $E_6$.
\item [{\rm (iii)}] For types $D_{4}$, $\lf \mQ \rf$ with respect to \eqref{eq: G_2} is given as follows:
$$\lf \mQ \rf = \displaystyle \left[\hspace{-0.7ex}\left[\prod_{k=0}^{5} (s_{2}s_{1})^{k\vee}\right]\hspace{-0.7ex}\right]$$
\end{enumerate}
\end{remark}

\begin{example} \hfill
\begin{enumerate}
\item[{\rm (i)}] For $\vee$ in \eqref{eq: B_n}, the Coxeter composition of a foldable cluster is
$$ \mathsf{C}^{\vee}_{\lf \mQ  \rf}=( \underbrace{2n-1, \ldots ,2n-1}_{ n\text{-times}} ).$$
\item[{\rm (ii)}] For $\vee$ in \eqref{eq: C_n}, the Coxeter composition of a foldable cluster is
$$ \mathsf{C}^{\vee}_{\lf \mQ  \rf}=( \underbrace{n+1, \ldots ,n+1}_{ n\text{-times}} ).$$
\item[{\rm (iii)}] For $\vee$ in \eqref{eq: F_4}, the Coxeter composition of a foldable cluster is
$\mathsf{C}^{\vee}_{\lf \mQ  \rf}=(9,9,9,9)$.
\item[{\rm (iv)}] For $\vee$ in \eqref{eq: G_2}, the Coxeter composition of a foldable cluster is
$\mathsf{C}^{\vee}_{\lf \mQ  \rf}=(6,6)$.
\end{enumerate}
\end{example}

\subsection{Adapted cluster point and Auslander-Reiten quiver}
Let $Q$ be a Dynkin quiver by orienting edges of a Dynkin diagram $\Delta$ of type $ADE$.
We say that a vertex $i$ in $Q$ is a {\em source} (resp.\ {\em sink}) if and only if
there are only exiting arrows out of it (resp.\ entering arrows into it).
For a source (resp.\ sink) $i$,
If $i$ is a sink or source, $\re_iQ$ denotes the quiver obtained by
$Q$ by reversing the arrows incident with $i$.
We say that a reduced expression $\tw=\re_{i_1}\re_{i_2}\cdots \re_{i_{\ell(w)}}$
 of $w \in \W$  is
{\em adapted to} $Q$ if $i_k$ is a sink of the quiver $\re_{i_{k-1}} \cdots \re_{i_2}\re_{i_1}Q$ for all $1 \le k \le \ell(w)$.

\medskip

The followings are well-known:

\begin{theorem} \label{thm: Qs} \hfill
\begin{enumerate}
\item[{\rm (1)}] Any reduced word $\redez$ of $w_0$ is adapted
to at most one Dynkin quiver $Q$.
\item[{\rm (2)}] For each Dynkin quiver $Q$, there is a reduced word $\redez$ of $w_0$ adapted to $Q$. Moreover,
any reduced word $\redez'$ in $[\redez]$ is adapted to $Q$,
and the commutation equivalence class $[\redez]$
is uniquely determined by $Q$.
We denote by $[Q]$ of the commutation equivalence class $[\redez]$.
\item[{\rm (3)}] For every commutation class $[Q]$, there exists a unique Coxeter element $\phi_Q$
which is a product of all simple reflections and adapted to $Q$.
\item[{\rm (4)}] For every Coxeter element $\phi$, there exists a unique Dynkin quiver $Q$ such that $\phi=\phi_Q$.
\item[{\rm (5)}] All commutation classes $\{ [Q] \}$ are reflection equivalent and form the $r$-cluster point $\lf Q \rf$, called the {\it adapted cluster point}. The number of
commutation classes in $\lf Q \rf$ is $2^{|I|-1}$. \\
\end{enumerate}
\end{theorem}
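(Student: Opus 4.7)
The plan is to prove the five assertions together by exploiting the fundamental correspondence between adapted reduced expressions for $w_0$, sinks of Dynkin quivers, and Coxeter elements. The key observation throughout is that $\redez=s_{i_1}\cdots s_{i_\N}$ is adapted to $Q$ exactly when $i_1$ is a sink of $Q$ and $i_k$ is a sink of $\re_{i_{k-1}}\cdots \re_{i_1}Q$ for every $k\ge 2$; since specifying a sink determines the orientations of all edges incident to it, this drives the entire argument. For (1), I argue by induction on $\N=\ell(w_0)$: if $\redez$ is adapted to both $Q$ and $Q'$, then $i_1$ must be a sink of both, forcing agreement of all edges at $i_1$; then $\re_{i_1}Q$ and $\re_{i_1}Q'$ both admit $s_{i_2}\cdots s_{i_\N}$ as an adapted expression, so they coincide by induction and hence $Q=Q'$.

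For the existence in (2), I would iteratively pick a sink of the current quiver and multiply by the corresponding simple reflection; since any finite acyclic quiver of type $ADE$ has a sink, this procedure produces a word of length $\ell(w_0)$ which is a reduced expression for $w_0$ by the standard Coxeter-theoretic argument that each sink removal realizes a positive root and exhausts $\PR$. A commutation move $s_{i_k}s_{i_{k+1}}\mapsto s_{i_{k+1}}s_{i_k}$ requires $i_k,i_{k+1}$ to be non-adjacent in $\Delta$, and such a swap manifestly preserves the sink property at every stage; conversely, two adapted expressions for the same $Q$ differ only by reorderings of mutually non-adjacent sinks that become simultaneously available in the sink-removal algorithm, so they lie in one commutation class.

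For (3), any ordering $(i_1,\ldots,i_{|I|})$ of all vertices of $\Delta$ in which each $i_k$ is a sink of $\re_{i_{k-1}}\cdots \re_{i_1}Q$ always exists (take the first $|I|$ sinks encountered in the construction of (2)), and the argument of (2) shows that any two such orderings give commutation-equivalent expressions, so $\phi_Q\seteq s_{i_1}\cdots s_{i_{|I|}}$ is well-defined and adapted to $Q$; uniqueness within $[Q]$ follows from (1). For (4), given a Coxeter element $\phi$ and a reduced expression $s_{j_1}\cdots s_{j_{|I|}}$ for it, I orient each edge $\{j_k,j_l\}$ of $\Delta$ with $k<l$ as $j_l\to j_k$; any other reduced expression for the same $\phi$ differs only by commutations of non-adjacent reflections, which do not affect the orientation assigned to any edge, so this yields a well-defined $Q$ with $\phi_Q=\phi$.

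Finally, for (5), the geometric reflection $\re_i$ on quivers realizes the reflection functor $r_i$ on commutation classes: when $i$ is a sink of $Q$ the identity $r_i[Q]=[\re_iQ]$ holds by the definition of $r_i$ combined with the description of $\phi_Q$ in (3). Since the underlying Dynkin diagram is a tree, any two orientations can be joined by a sequence of sink/source reversals, so all classes $[Q]$ are reflection equivalent and form a single $r$-cluster point $\lf Q\rf$; the count $2^{|I|-1}$ is then the number of orientations of a tree with $|I|-1$ edges, and the map $Q\mapsto[Q]$ is injective by (1). The main obstacle is the commutation-equivalence statement in (2): one must verify that the sink-swap moves generating the possible runs of the sink-removal algorithm correspond precisely to braid-free commutations of non-adjacent simple reflections, which requires careful tracking of how independent sinks persist after intermediate arrow reversals. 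Once this is established, the remaining parts follow cleanly.
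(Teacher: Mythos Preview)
The paper does not prove this theorem: it is introduced with ``The followings are well-known'' and stated without argument, implicitly referring to standard sources such as \cite{B99,Gab80,OS15}. So there is no proof in the paper to compare against; you have supplied a sketch of the classical arguments, and your outline follows the standard route via sink-removal and the bijection between Coxeter elements and orientations of $\Delta$.

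Two small points of care. In (1), your phrase ``induction on $\N=\ell(w_0)$'' is not literally an induction, since $\N$ is fixed; what you really want is the stronger statement ``any reduced word of \emph{any} $w\in\W$ is adapted to at most one quiver,'' proved by induction on $\ell(w)$, which is what your step $\re_{i_1}Q=\re_{i_1}Q'\Rightarrow Q=Q'$ actually uses. In (5), the paper's reflection functors involve the involution $i\mapsto i^*$: the right action gives $[Q]\,r_i=[s_{i_2}\cdots s_{i_\N}s_{i^*}]$ when $i$ is a sink of $[Q]$, so to identify this with $[\re_iQ]$ you must check that $i^*$ is a sink of $\re_{i_\N}\cdots\re_{i_2}\re_iQ$; this follows from the classical fact that the full adapted sequence of sink reflections carries $Q$ to the quiver $Q^*$ obtained by relabelling vertices via $^*$, but it is worth saying explicitly rather than writing the bare identity $r_i[Q]=[\re_iQ]$. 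With these adjustments your argument is the standard one and is correct.
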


Let $\Phi(\phi_Q)$ be the subset of $\PR$ determined by $\phi_Q=s_{i_1}s_{i_2}\cdots s_{i_n}$ with $|I|=n$:
$$  \Phi(\phi_Q)=\PR\cap \phi_Q\PR=
\{ \beta^{\phi_Q}_1=\al_{i_1}, \beta^{\phi_Q}_2=s_{i_1}(\al_{i_1}),\ldots,\beta^{\phi_Q}_n =s_{i_1}\cdots s_{i_{n-1}}(\al_{i_n}) \}.$$

The {\it height function} $\xi$ on $Q$ is an integer-valued map $\xi:Q \to \Z$ satisfying $\xi(j)=\xi(i)+1$ when $i \to j$ in $Q$.

\medskip

The Auslander-Reiten quiver (AR-quiver) $\Gamma_Q$ associated to $Q$ is a quiver with coordinates in $I \times \Z$ defined as follows: Construct an injective map
$\Omega_Q: \PR\to I \times \Z$ in an inductive way.
\begin{itemize}
\item[{\rm (i)}] $\Omega_Q(\beta_k^{\phi_Q}) \seteq (i_k,\xi(i_k))$.
\item[{\rm (ii)}] If $\Omega_Q(\beta)$ is already assigned as $(i,p)$ and $\phi_Q(\beta) \in \PR$, then $\Omega_Q(\phi_Q(\beta))=(i,p-2)$.
\end{itemize}
The AR-quiver $\Gamma_Q$ is a quiver whose vertices consist of ${\rm Im}(\Omega_Q)$ ($\simeq\PR$) and arrows
 $(i,p) \to (j,q)$ are assigned when $i$ and $j$ are adjacent in $\Delta$ and $p-q=-1$.

\begin{example} \label{example: A_4}
The AR-quiver $\Gamma_Q$ associated to $\xymatrix@R=3ex{ *{ \bullet }<3pt> \ar@{<-}[r]_<{1}  &*{\bullet}<3pt>
\ar@{->}[r]_<{2}  &*{\bullet}<3pt>
\ar@{->}[r]_<{3} &*{\bullet}<3pt>
\ar@{-}[l]^<{4} }$ of type $A_4$ with the height function such that $\xi(1)=0$ is given as follows:
\[ \scalebox{0.7}{\ \xymatrix@C=1ex@R=0.5ex{
( i,p ) &-4&&-3&&-2&&-1&&0 && 1 \\
1&&&&& [2,4] \ar@{->}[ddrr] &&&&[1]  \\  & \\
2&&&[2,3]\ar@{->}[uurr]\ar@{->}[ddrr] && && [1,4] \ar@{->}[uurr]\ar@{->}[ddrr]\\ & \\
3& [2]\ar@{->}[uurr]\ar@{->}[ddrr] &&&& [1,3]\ar@{->}[uurr]\ar@{->}[ddrr] &&&& [3,4]\ar@{->}[ddrr]\\ & \\
4&&& [1,2] \ar@{->}[uurr] &&&& [3] \ar@{->}[uurr] &&&& [4]}}
\]
Here $[a,b]$ $(1 \le a,b \le 4)$ stands for the positive root $\sum_{k=a}^b \al_k$ of $\PR_{A_4}$.
\end{example}

Interestingly, $\Gamma_Q$ can be understood as a visualization of $\prec_{Q} \seteq \prec_{[Q]}$ and is closely related to the commutation class $[Q]$:

\begin{theorem} \cite{B99,OS15}
\begin{enumerate}
\item[{\rm (1)}] $\al \prec_Q \beta$ if and only if there exists a path from $\beta$ to $\al$ inside of $\Gamma_Q$.
\item[{\rm (2)}] By reading the residues
\ro i.e., $i$ for $(i,p)$\rof\  of vertices in a way {\em compatible with} arrows, we can obtain all reduced expressions $\redez \in [Q]$.
\end{enumerate}
\end{theorem}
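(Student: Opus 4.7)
The plan is to prove both parts by showing that the arrow-generated partial order on $\Gamma_Q$ coincides with the convex order $\prec_Q$, and then to deduce that compatible readings are exactly the linear extensions, which bijectively parameterize reduced expressions in $[Q]$.

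For part (1), I would first reformulate both relations as partial orders on $\Phi^+$. Let $\preceq_{\Gamma_Q}$ denote the reachability order in $\Gamma_Q$: $\alpha \preceq_{\Gamma_Q} \beta$ iff there is a directed path from $\beta$ to $\alpha$. This order is generated by its covering relations, which are exactly the arrows. The convex order $\prec_Q$ is characterized by: in any $\redez=s_{i_1}\cdots s_{i_N} \in [Q]$ with $\alpha = \beta_k^\redez$ and $\beta = \beta_l^\redez$, one has $\alpha \prec_Q \beta$ iff $k < l$; its covers correspond to pairs $(\beta_k^\redez, \beta_{k+1}^\redez)$ where $i_k$ and $i_{k+1}$ are adjacent in $\Delta$ (otherwise a commutation swap produces a strictly interposed root). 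I would then match the two sets of covers. Starting from the source vertices of $\Gamma_Q$, which are the roots $\beta_1^{\phi_Q}, \dots, \beta_n^{\phi_Q}$ assigned coordinates $(i_s,\xi(i_s))$ by construction (i), and iterating rule (ii) that sends $(i,p)$ to $(i,p-2)$ under $\phi_Q^{-1}$, a downward induction on the second coordinate $p$ reduces any arrow $(i,p) \to (j,p+1)$ of $\Gamma_Q$ to the case of the initial Coxeter word, where adjacency of $i,j$ in $\Delta$ forces the corresponding roots to appear in the convex order $\alpha \prec_Q \beta$. The converse is analogous: a cover of $\prec_Q$ comes from adjacent simple reflections, which by the coordinate recursion places the two roots on vertically adjacent columns, producing exactly one arrow in $\Gamma_Q$.

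For part (2), any reading of $\Gamma_Q$ compatible with its arrows is a linear extension of $\preceq_{\Gamma_Q}$, hence, by part (1), of $\prec_Q$. If the reading is $(\Omega_Q(\beta_1), \dots, \Omega_Q(\beta_N))$ with residues $(i_1, \dots, i_N)$, then the $\beta_k$'s exhaust $\Phi^+$ in an order compatible with $\prec_Q$; choosing any $\redez \in [Q]$ and using the uniqueness (up to commutation) of the position of each positive root in a reduced expression shows $s_{i_1} \cdots s_{i_N}$ is a reduced expression of $w_0$ lying in $[Q]$. Adaptedness to $Q$ follows by induction: the first residue $i_1$ must come from a source of $\Gamma_Q$, which matches a sink of $Q$; removing this vertex yields $\Gamma_{s_{i_1}Q}$, and the hypothesis applies. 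Conversely, any $\redez \in [Q]$ produces a reading by listing roots in the order $\beta_1^\redez, \beta_2^\redez, \dots$, which is compatible with the arrows because $\prec_Q$ extends $\preceq_{\Gamma_Q}$.

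The main obstacle is the careful verification that covers of $\prec_Q$ match arrows of $\Gamma_Q$ one-to-one, and the identification of sources of $\Gamma_Q$ with sinks of $Q$ together with the reduction $\Gamma_Q \setminus \{\text{source } i\} = \Gamma_{s_i Q}$. Both rest on the interplay between the Coxeter element $\phi_Q$ and reflections at sinks, and require tracking boundary cases of the recursion (ii), namely the vertices where $\phi_Q$ sends the root to a negative root and the inductive chain terminates. Once this bookkeeping is in place, parts (1) and (2) follow from the matching of covering relations and the standard dictionary between linear extensions of a quiver's order and topological sortings.
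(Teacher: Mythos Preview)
The paper does not give a proof of this theorem: it is stated with citations to B\'edard \cite{B99} and Oh--Suh \cite{OS15} and used as background. There is therefore no argument in the paper to compare your proposal against.

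As to the proposal itself, the overall strategy---match the covering relations of $\prec_Q$ with the arrows of $\Gamma_Q$, then identify compatible readings with linear extensions and hence with reduced words in $[Q]$---is the standard route and is essentially what is carried out in the cited references. However, two points need correction. First, you write that ``the source vertices of $\Gamma_Q$ \ldots are the roots $\beta_1^{\phi_Q},\dots,\beta_n^{\phi_Q}$''; in fact these roots sit at the coordinates $(i_k,\xi(i_k))$ with maximal second coordinate in their row, so they are the \emph{sinks} of $\Gamma_Q$, not the sources (recall arrows go $(i,p)\to(j,p+1)$). Correspondingly, a reading ``compatible with arrows'' begins at sinks of $\Gamma_Q$, and it is the sinks of $\Gamma_Q$ which correspond to simple roots $\alpha_i$ with $i$ a sink of $Q$; your part~(2) has this reversed. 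Second, the claimed reduction ``removing this vertex yields $\Gamma_{s_{i_1}Q}$'' is the heart of the matter and is not automatic from the recursive definition of $\Omega_Q$: one must check that deleting a sink vertex of $\Gamma_Q$ and relabelling via $s_{i_1}$ reproduces the coordinate map $\Omega_{s_{i_1}Q}$, which amounts to comparing the Coxeter elements $\phi_Q$ and $\phi_{s_{i_1}Q}=s_{i_1}\phi_Q s_{i_1}$ and their orbits. Once the source/sink bookkeeping is corrected and this reflection step is verified, your outline goes through.
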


In Example \ref{example: A_4}, we can get a reduced expression $\redez$ in $[Q]$ as follows:
$$\redez=
s_4s_1s_3s_2s_4s_1s_3s_2s_4s_3$$

\subsection{Relationship between $\lf \mQ \rf$ and $\lf Q \rf$} In this subsection, we briefly recall the relationship between
$\lf \mQ \rf$ and $\lf Q \rf$ studied in \cite{OS16B,OS16C}.
We shall first consider a Dynkin quiver $Q$ of type $A$ and $\redez = s_{i_1}s_{i_2} \cdots s_{i_\N}$ in $[Q]$.

\begin{theorem} [\cite{OS16B}]
For $\redez=s_{i_1}s_{i_2} \cdots s_{i_\N} \in [Q]$ of type
$A_{2n-2}$, we
can obtain two distinct twisted adapted classes $[\mQ^>], [\mQ^<]
\in \lf \mQ \rf$ of type $A_{2n-1}$ as follows:
\begin{enumerate}
\item[{\rm (1)}] For each pair $(i_k,i_l)$ such that $\{ i_k ,i_l \} = \{ n-1, n \}$ and $i_j \not \in \{ n-1, n \}$ for any $j$ with $k < j <l$,
we replace subexpression
$ s_{i_k}s_{i_{k+1}} \cdots  s_{i_l}$ with $ s_{i^+_k}s_{n}s_{i^+_{k+1}} \cdots  s_{i^+_l}$ where $i^+=i+1$ if $i > n-1$ and $i^+=i$ otherwise.
\item[{\rm (2)}] For the smallest index $($resp.\ the largest index$)$ $i_t$  with $i_t  \in \{ n-1,n \}$, we replace $s_{i_t}$ with $s_{n}s_{i^+_t}$
$($resp.\ $s_{i^+_t}s_{n})$.
\end{enumerate}
Then the resulted reduced expression $\redez^>$ $($resp.\ $\redez^<)$ is a reduced expression whose commutation class $[\mQ^>]$ $($resp.\ $[\mQ^<])$
is well-defined and twisted adapted. Conversely, each commutation class in $\lf \mQ\rf$ can be obtained in this way and $[\mQ] \ne [\mathscr{Q'}]$
if $Q \ne Q'$.
\end{theorem}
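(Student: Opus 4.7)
The plan is to proceed in four stages: (i) $\redez^\epsilon$ is a reduced expression for $w_0$ of type $A_{2n-1}$ for each $\epsilon\in\{>,<\}$; (ii) the commutation class $[\mathscr{Q}^\epsilon]$ depends only on $[Q]$; (iii) $[\mathscr{Q}^\epsilon]\in\lf\mQ\rf$; (iv) the map $([Q],\epsilon)\mapsto[\mathscr{Q}^\epsilon]$ is a bijection onto $\lf\mQ\rf$. Stage (i) I would open with the key combinatorial lemma that every reduced word of $w_0$ in $A_{2n-2}$ contains exactly $2n-1$ letters from $\{n-1,n\}$, which strictly alternate between $n-1$ and $n$. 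This can be established by iterating the reflection functors of Theorem~\ref{thm: Qs} and tracking the subword on $\{s_{n-1},s_n\}$. Granted this, the construction inserts $(2n-2)+1=2n-1$ copies of $s_n$, matching the length increment $\ell(w_0^{A_{2n-1}})-\ell(w_0^{A_{2n-2}})=2n-1$; verifying that the product in $W_{A_{2n-1}}$ equals $w_0$ can then be done by induction on $n$ or by a direct permutation calculation in $S_{2n}$.

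For stage (ii), any commutation move $s_as_b=s_bs_a$ in $\redez$ has $|a-b|\ge 2$ and so cannot involve both $s_{n-1}$ and $s_n$. A routine case analysis on whether $\{a,b\}\cap\{n-1,n\}$ has $0$ or $1$ element shows that after the $i^+$-shift and $s_n$-insertions the same commutation remains available, because the inserted $s_n$'s commute past any letter of index $\neq n\pm1$. Hence $[\mathscr{Q}^\epsilon]$ depends only on $[Q]$. For stage (iii), I would compute the Coxeter composition $\mathsf{C}^\vee_{[\mathscr{Q}^\epsilon]}(\overline k)$ orbit by orbit: the fixed orbit $\overline n$ absorbs all $2n-1$ inserted copies of $s_n$, and for each $\overline i=\{i,2n-i\}$ with $i<n$ the count is $2n-1$, which requires the stronger statement that in any adapted reduced word of $w_0$ in $A_{2n-2}$, the number of letters in every orbit $\{j,2n-1-j\}$ of the natural involution equals $2n-1$. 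This balance condition I would establish by the same reflection-functor induction used in stage (i), bolstered by the uniform column structure of the AR-quiver $\Gamma_Q$. Uniformity of $\mathsf{C}^\vee_{[\mathscr{Q}^\epsilon]}$ together with Definition~\ref{def:foldable} then gives $[\mathscr{Q}^\epsilon]\in\lf\mQ\rf$.

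For stage (iv), injectivity is clear: the outermost inserted $s_n$ identifies $\epsilon\in\{>,<\}$, and deleting the inserted $s_n$'s while inverting $i^+$ recovers $\redez\in[Q]$. Surjectivity follows by matching cardinalities --- the number of pairs $([Q],\epsilon)$ is $2\cdot 2^{(2n-2)-1}=2^{2n-2}$, equal to $2^{|I|-|\vee|}\cdot|\vee|=2^{2n-2}$, the number of twisted adapted classes in $\lf\mQ\rf$ for type $A_{2n-1}$; combined with injectivity and stage (iii) the map is a bijection onto $\lf\mQ\rf$. The main obstacle I anticipate is stage (iii): the orbit-balance statement for adapted reduced words --- the property that every reduced expression in $[Q]$ distributes letters evenly across natural-involution orbits --- is false for arbitrary reduced words of $w_0$ and must rely on the uniform ``slice'' structure given by the height function on $\Gamma_Q$. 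Making this precise and compatible with the insertion/shift procedure is the most delicate bookkeeping in the argument.
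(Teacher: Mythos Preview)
The paper does not supply its own proof of this theorem; it is quoted verbatim from \cite{OS16B}. So there is no ``paper's proof'' to compare against, and the question is whether your outline stands on its own.

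There is a genuine gap in stage~(iii). You conclude $[\mathscr{Q}^\epsilon]\in\lf\mQ\rf$ from uniformity of the Coxeter composition together with Definition~\ref{def:foldable}. But Definition~\ref{def:foldable} only says what it means for an $r$-cluster point to be $\vee$-foldable; it does not assert that the foldable cluster point is unique, and the paper explicitly warns just before Proposition~2.5 that uniqueness is \emph{not} known. Thus even if your orbit-balance computation goes through, you have only shown that $[\mathscr{Q}^\epsilon]$ lies in \emph{some} foldable cluster point, not in the particular $\lf\mQ\rf$ produced from twisted Coxeter elements as in Remark~(ii). To close this you must either exhibit an explicit chain of reflection functors from $[\mathscr{Q}^\epsilon]$ to one of the classes $\bigl[\prod_{k}(s_{j_1}\cdots s_{j_n})^{k\vee}\bigr]$, or otherwise identify $[\mathscr{Q}^\epsilon]$ directly with a class arising from a twisted Coxeter element. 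The cardinality argument in stage~(iv) does not rescue this: without knowing that all the $[\mathscr{Q}^\epsilon]$ land in a single cluster point, matching $2^{2n-2}$ against $|\lf\mQ\rf|$ proves nothing about surjectivity onto $\lf\mQ\rf$.

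A secondary issue: your ``key combinatorial lemma'' in stage~(i) is false as stated. The reduced word $s_2s_1s_2s_3s_2s_1s_4s_3s_2s_1$ for $w_0$ in $A_4$ (obtained from the standard word by a single braid move) has six letters from $\{2,3\}$, with subword $2,2,3,2,3,2$, neither of the claimed cardinality $2n-1=5$ nor alternating. The lemma \emph{does} hold for words in an adapted class $[Q]$---row $i$ and row $2n{-}1{-}i$ of $\Gamma_Q$ together contain $2n-1$ vertices, and the arrow structure forces alternation---but you should state and prove it only in that setting, since that is all the theorem requires.
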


By the work of \cite{OS15}, the combinatorial AR-quivers $\Upsilon_{[\mQ^>]}$ and $\Upsilon_{[\mQ^<]}$ of $Q$ in Example \ref{example: A_4} can be understood as
realization of the convex partial orders $\prec_{[\mQ^>]}$ and $\prec_{[\mQ^<]}$:
\begin{align}\label{eq: Ex A}
\scalebox{0.64}{\xymatrix@C=1ex@R=1ex{
( i,p ) &-4&-\frac{7}{2}&-3&-\frac{5}{2}&-2&-\frac{3}{2}&-1&-\frac{1}{2}&0 &\frac{1}{2}& 1 \\
1&&&&& \bullet \ar@{->}[drr] &&&& \bullet  \\
2&&&\bullet\ar@{->}[urr]\ar@{->}[dr] && && \bullet \ar@{->}[urr]\ar@{->}[dr]\\
3 && \bigstar\ar@{->}[ur] && \bigstar\ar@{->}[dr] && \bigstar\ar@{->}[ur] && \bigstar\ar@{->}[dr] && \bigstar \\
4& \bullet\ar@{->}[ur]\ar@{->}[drr] &&&& \bullet\ar@{->}[ur]\ar@{->}[drr] &&&& \bullet\ar@{->}[drr]\ar@{->}[ur]\\
5&&& \bullet \ar@{->}[urr] &&&& \bullet \ar@{->}[urr] &&&& \bullet}} \
\scalebox{0.64}{\xymatrix@C=1ex@R=1ex{
( i,p ) &-\frac{9}{2}&-4&-\frac{7}{2}&-3&-\frac{5}{2}&-2&-\frac{3}{2}&-1&-\frac{1}{2}&0 &\frac{1}{2}& 1 \\
1&&&&&& \bullet \ar@{->}[drr] &&&& \bullet  \\
2&&&&\bullet\ar@{->}[urr]\ar@{->}[dr] && && \bullet \ar@{->}[urr]\ar@{->}[dr]\\
3&\bigstar\ar@{->}[dr]&& \bigstar\ar@{->}[ur] && \bigstar\ar@{->}[dr] && \bigstar\ar@{->}[ur] && \bigstar\ar@{->}[dr]  \\
4&& \bullet\ar@{->}[ur]\ar@{->}[drr] &&&& \bullet\ar@{->}[ur]\ar@{->}[drr] &&&& \bullet\ar@{->}[drr]\\
5&&&& \bullet \ar@{->}[urr] &&&& \bullet \ar@{->}[urr] &&&& \bullet}}
\end{align}
For each new vertex, denoted by $\bigstar$ above, we can assign its coordinate in $I \times \Z/2$ in a canonical way. By \cite{OS15},
we can obtain all reduced expressions $\redez \in [\mQ^>]$ (resp.\ $[\mQ^<]$) by reading it in a compatible way with arrows: For instances, we have
\begin{itemize}
\item $s_5s_3s_1s_4s_3s_5s_3s_1s_4s_3s_2s_5s_3s_4\in [\mQ^>]$.
\item $s_5s_4s_1s_3s_2s_3s_5s_4s_1s_3s_2s_3s_5s_4s_3 \in [\mQ^<]$.
\end{itemize}

In $D_{n+1}$ case, we can get two distinct commutation classes $[\mQ^{\gets n}]$ and $[\mQ^{\gets n+1}] \in \lf \mQ \rf$
from $\Gamma_Q$ of type $A_n$:

\begin{theorem}[\cite{OS16C}]\hfill
\begin{itemize}
\item[{\rm (1)}] For a given $\Gamma_Q$, consider the copy $\Gamma^{\updownarrow}_Q$ of $\Gamma_Q$ by turning upside down.
\item[{\rm (2)}] By putting $\Gamma^{\updownarrow}_Q$ to the left of $\Gamma_Q$, we have new quiver inside of $I \times \Z$ by assigning arrows to vertices $(i,p)$ and
$(j,q) \in \Gamma^{\updownarrow}_Q \sqcup \Gamma_Q$, $(i,p)\to (j,q)$ such that $i,j$ are adjacent in $Q$ and
$q-p=1$.
\item[{\rm (3)}] For vertices whose residues are $n$, we change their residues as $n,n+1,n,n+1...$
$($resp.\ $n+1,n,n+1,n...)$ from the right-most one.
\end{itemize}
Then the resulted quiver coincides with the combinatorial quiver $\Upsilon_{[\mQ^{\gets n}]}$
$($resp.\ $\Upsilon_{[\mQ^{\gets n+1}]})$ of type $D_{n+1}$ introduced in \cite{OS15,OS16C}. Thus we can obtain all reduced expressions $\redez \in [\mQ^{\gets n}]$
$($resp.\ $[\mQ^{\gets n+1}])$ by reading it. Conversely, each commutation class in $\lf \mQ \rf$ can be obtained in this way and
$[\mQ] \ne [\mathscr{Q'}]$
if $Q \ne Q'$.
\end{theorem}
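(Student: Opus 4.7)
The plan is to verify three claims in turn: (a) the quiver produced by steps (1)--(3) coincides with $\Upsilon_{[\mQ^{\gets n}]}$ or $\Upsilon_{[\mQ^{\gets n+1}]}$ for some twisted adapted class, depending on the sign choice in step (3); (b) every $[\mQ] \in \lf \mQ \rf$ arises this way from some Dynkin quiver $Q$ of type $A_n$; and (c) the assignment $Q \mapsto [\mQ^{\gets n}]$ (resp.\ $[\mQ^{\gets n+1}]$) is injective. The central ingredient is the product description recorded in item (ii) of the Remark above: every class in $\lf \mQ \rf$ for type $D_{n+1}$ equals $\bigl[\prod_{k=0}^{n}(s_{j_1}\cdots s_{j_n})^{k\vee}\bigr]$ for some twisted Coxeter element $s_{j_1}\cdots s_{j_n}$ with respect to the involution $\vee$ of \eqref{eq: C_n}.

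For (a), I would read the concatenated quiver $\Gamma^{\updownarrow}_Q \sqcup \Gamma_Q$ column by column, from right to left. The rightmost Coxeter column of $\Gamma_Q$ yields the Coxeter element $\phi_Q = s_{i_1}\cdots s_{i_n}$ of type $A_n$. After step (3) replaces the unique residue-$n$ entry of this column by either $s_n$ or $s_{n+1}$, the resulting word picks exactly one reflection from each $\vee$-orbit $\{1\},\ldots,\{n-1\},\{n,n+1\}$ on the index set of $D_{n+1}$, and so is a twisted Coxeter element in the sense of Definition \ref{def: twisted Coxeter}. Each successive column repeats the $A_n$-word $\phi_Q$ (or its horizontal reflection coming from $\Gamma^{\updownarrow}_Q$), while step (3) alternates the residue $n \leftrightarrow n+1$; this is precisely the action of the involution $\vee$ on the previous column. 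Hence the full right-to-left reading of the folded quiver realizes the product $\prod_{k=0}^{n}(s_{j_1}\cdots s_{j_n})^{k\vee}$, of length $n(n+1)=\ell(w_0^{D_{n+1}})$, and therefore is a reduced expression representing a class in $\lf \mQ \rf$. By the inductive characterization of combinatorial AR-quivers from \cite{OS15}, the arrow pattern prescribed in step (2) then coincides with that of $\Upsilon_{[\mQ^{\gets n}]}$ (resp.\ $\Upsilon_{[\mQ^{\gets n+1}]}$).

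For (b) and (c), I would use a counting argument backed by an explicit inverse. By Theorem \ref{thm: Qs}(5) the adapted cluster point for $A_n$ contains $2^{n-1}$ classes $[Q]$, and the two sign choices in step (3) yield $2 \cdot 2^{n-1} = 2^n$ putative classes in $\lf \mQ \rf$ of $D_{n+1}$. The proposition on the size of $\lf \mQ \rf$ gives $2^{|I|-|\vee|}\cdot|\vee| = 2^{(n+1)-2}\cdot 2 = 2^n$ twisted adapted classes, so once injectivity is established, surjectivity is automatic. Injectivity is seen by explicit inversion: from $[\mQ]$ one reads the rightmost twisted Coxeter element, collapses $n,n+1 \mapsto n$ to recover a Coxeter element of $A_n$, and then Theorem \ref{thm: Qs}(4) yields a unique Dynkin quiver $Q$; distinct $Q$ produce distinct $\phi_Q$, hence distinct twisted Coxeter elements, hence distinct classes.

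The main obstacle lies in step (a), namely verifying that the arrows prescribed in step (2) realize the convex order $\prec_{[\mQ]}$ on $\Phi^+_{D_{n+1}}$. Within each half $\Gamma_Q$ or $\Gamma^{\updownarrow}_Q$ the $A_n$-arrows transport directly to $D_{n+1}$-arrows after residue identification (since adjacency in the Dynkin diagram is preserved by the folding except at the branch $\{n,n+1\}$). The delicate point is at the seam between $\Gamma^{\updownarrow}_Q$ and $\Gamma_Q$, where new covering relations appear between roots in the two halves, particularly those involving the alternating $n/n+1$ labels; one must check on a case-by-case basis that exactly these covering relations are produced by the rule ``$(i,p) \to (j,q)$ when $i,j$ are adjacent in $Q$ and $q-p=1$''. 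The product-expression viewpoint from Remark (ii) reduces this to a mechanical, though laborious, comparison with the inductive definition of $\Upsilon_{[\redez]}$ in \cite{OS15}.
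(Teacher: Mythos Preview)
This theorem is not proved in the present paper: it is quoted from \cite{OS16C} and stated without argument, followed only by the illustrative Example~\ref{Ex: UpQ}. There is therefore no proof here against which to compare your proposal.

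That said, a few comments on your outline. Your counting argument for (b) and (c) is clean and correct: the $2^{n-1}$ Dynkin quivers of type $A_n$ together with the binary choice in step~(3) produce $2^n$ candidates, matching the cardinality $2^{|I|-|\vee|}\cdot|\vee|=2^n$ of $\lf\mQ\rf$, so injectivity suffices; and your inverse (collapse $n,n{+}1\mapsto n$, recover $\phi_Q$, invoke Theorem~\ref{thm: Qs}(4)) is the natural one.

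For (a), your idea of reading the concatenated quiver to produce the product $\prod_{k=0}^{n}(s_{j_1}\cdots s_{j_n})^{k\vee}$ is the right one, but your description is imprecise. The AR-quiver $\Gamma_Q$ of type $A_n$ does not decompose into uniform ``Coxeter columns'' each carrying $\phi_Q$; its shape is a staircase (cf.\ Example~\ref{example: A_4}), and a compatible reading of $\Gamma_Q$ alone does not yield a power of $\phi_Q$. What makes the combined object $\Gamma_Q^{\updownarrow}\sqcup\Gamma_Q$ special is that the upside-down copy fills in exactly the missing pieces so that the full quiver becomes a genuine $(n{+}1)$-fold repetition pattern (visible in Example~\ref{Ex: UpQ}). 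You would need to argue this structural fact carefully before the column-reading argument goes through. You correctly flag the verification of the arrow structure at the seam and near the $n/n{+}1$ branch as the real work; in \cite{OS16C} this is handled via the explicit labeling of $\Phi^+_{D_{n+1}}$ by the twisted Coxeter orbits, rather than by a direct comparison with the inductive definition of $\Upsilon_{[\redez]}$ as you suggest.
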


\begin{example} \label{Ex: UpQ}
For better explanation of the above theorem, we now give examples by using $\Gamma_Q$ in Example \ref{example: A_4}: For the $\Gamma_Q$, $\Gamma^{\updownarrow}_Q$
can be described as follows:
$$
\Gamma^{\updownarrow}_Q =
\raisebox{2em}{\scalebox{0.8}{\xymatrix@C=2ex@R=1ex{
1 &&  \bullet \ar@{->}[dr] && \bullet\ar@{->}[dr] && \bullet \\
2 & \bullet\ar@{->}[dr]\ar@{->}[ur] && \bullet\ar@{->}[dr]\ar@{->}[ur] && \bullet\ar@{->}[ur]  \\
3 && \bullet \ar@{->}[dr]\ar@{->}[ur]  && \bullet  \ar@{->}[dr]\ar@{->}[ur]  \\
4 &&& \bullet \ar@{->}[ur] && \bullet}}}
\qquad
\Gamma_Q =
\raisebox{2em}{\scalebox{0.8}{\xymatrix@C=2ex@R=1ex{
1 &&&  \bullet \ar@{->}[dr] && \bullet \\
2 && \bullet\ar@{->}[dr]\ar@{->}[ur] && \bullet\ar@{->}[dr]\ar@{->}[ur] \\
3 & \bullet \ar@{->}[dr]\ar@{->}[ur]  && \bullet  \ar@{->}[dr]\ar@{->}[ur]  && \bullet  \ar@{->}[dr] \\
4 && \bullet \ar@{->}[ur] && \bullet \ar@{->}[ur]&& \bullet }}}
$$
By putting
$\Gamma^{\updownarrow}_Q$ to the left of $\Gamma_Q$, we have new quiver as follows:
$$\scalebox{0.8}{\xymatrix@C=2ex@R=1ex{
( i,p ) & -9 & -8 & -7 & -6 & -5 & -4 & -3& -2 &-1 & 0 & 1 \\
1 &&  \bullet \ar@{->}[dr] && \bullet\ar@{->}[dr] && \bullet \ar@{->}[dr] &&  \bullet \ar@{->}[dr] &&  \bullet \\
2 & \bullet\ar@{->}[dr]\ar@{->}[ur] && \bullet\ar@{->}[dr]\ar@{->}[ur] && \bullet\ar@{->}[ur]\ar@{->}[dr] &&  \bullet \ar@{->}[ur]\ar@{->}[dr]&&  \bullet \ar@{->}[dr]\ar@{->}[ur] \\
3 && \bullet \ar@{->}[dr]\ar@{->}[ur]  && \bullet  \ar@{->}[dr]\ar@{->}[ur] &&  \bullet \ar@{->}[ur]\ar@{->}[dr]&&  \bullet \ar@{->}[ur]\ar@{->}[dr] &&  \bullet \ar@{->}[dr]\\
4 &&& \bullet \ar@{->}[ur] && \bullet \ar@{->}[ur]&&  \bullet \ar@{->}[ur]&&  \bullet \ar@{->}[ur]&&  \bullet}}
$$
Now we can get $\Upsilon_{[\mQ^{\gets n}]}$ and $\Upsilon_{[\mQ^{\gets n+1}]}$ as follows:
\begin{align*}
  \Upsilon_{[\mQ^{\gets n}]} & =\raisebox{3.5em}{\scalebox{0.8}{\xymatrix@C=2ex@R=1ex{
( i,p ) & -9& -8 & -7 & -6 & -5 & -4 & -3& -2 &-1 & 0 & 1 \\
1 &&  \bullet \ar@{->}[dr] && \bullet\ar@{->}[dr] && \bullet \ar@{->}[dr] &&  \bullet \ar@{->}[dr] &&  \bullet \\
2 & \bullet\ar@{->}[dr]\ar@{->}[ur] && \bullet\ar@{->}[dr]\ar@{->}[ur] && \bullet\ar@{->}[ur]\ar@{->}[dr] &&  \bullet \ar@{->}[ur]\ar@{->}[dr]&&  \bullet \ar@{->}[dr]\ar@{->}[ur] \\
3 && \bullet \ar@{->}[dr]\ar@{->}[ur]  && \bullet  \ar@{->}[ddr]\ar@{->}[ur] &&  \bullet \ar@{->}[ur]\ar@{->}[dr]&&  \bullet \ar@{->}[ur]\ar@{->}[ddr] &&  \bullet \ar@{->}[dr]\\
4 &&& \diamond \ar@{->}[ur] &&&&  \diamond \ar@{->}[ur]&&&&  \diamond  \\
5 &&&&& \ast \ar@{->}[uur]&&&& \ast \ar@{->}[uur]}}}
\\
\Upsilon_{[\mQ^{\gets n+1}]} & = \raisebox{3.5em}{\scalebox{0.8}{\xymatrix@C=2ex@R=1ex{
( i,p ) & -9 & -8 & -7 & -6 & -5 & -4 & -3& -2 &-1 & 0 & 1 \\
1 &&  \bullet \ar@{->}[dr] && \bullet\ar@{->}[dr] && \bullet \ar@{->}[dr] &&  \bullet \ar@{->}[dr] &&  \bullet \\
2 & \bullet\ar@{->}[dr]\ar@{->}[ur] && \bullet\ar@{->}[dr]\ar@{->}[ur] && \bullet\ar@{->}[ur]\ar@{->}[dr] &&  \bullet \ar@{->}[ur]\ar@{->}[dr]&&  \bullet \ar@{->}[dr]\ar@{->}[ur] \\
3 && \bullet \ar@{->}[ddr]\ar@{->}[ur]  && \bullet  \ar@{->}[dr]\ar@{->}[ur] &&  \bullet \ar@{->}[ur]\ar@{->}[ddr]&&  \bullet \ar@{->}[ur]\ar@{->}[dr] &&  \bullet \ar@{->}[ddr]\\
4 &&&  && \diamond \ar@{->}[ur]&&&&  \diamond \ar@{->}[ur]   \\
5 &&& \ast \ar@{->}[uur] && &&  \ast \ar@{->}[uur]&& &&  \ast }}}
\end{align*}
One can easily notice that we can assign a coordinate to each vertex in a canonical way.
\end{example}

\subsection{Folded AR-quivers} Now we can define {\em a folded $AR$-quiver} $\WUp_{[\mQ]}$ associated to the commutation class $[\mQ]$
in $\lf \mQ \rf$ of type $A_{2n-1}$ and $D_{n+1}$ by folding $\Upsilon_{[\mQ]}$ (\cite{OS16B,OS16C}):
\begin{enumerate}
\item[{\rm (i)}] ($[\mQ]$ of type $A_{2n-1}$) By replacing coordinate $\Omega_Q(\beta)=(i,p/2)$ of $\beta$ in $\Upsilon_{[\mQ]}$ with
$\widehat{\Omega}_{[\mQ]}(\beta)=(\overline{i},p) \in \overline{I} \times \Z$, we have new quiver $\WUp_{[\mQ]}$ with {\em folded coordinates}, which is isomorphic to
$\Upsilon_{[\mQ]}$ as quivers.
\item[{\rm (ii)}] ($[\mQ]$ of type $D_{n+1}$) By replacing coordinate $\Omega(\beta)=(i,p)$ of $\beta$ in $\Upsilon_{[\mQ]}$ with
$\widehat{\Omega}_{[\mQ]}(\beta)=(\overline{i},p) \in \overline{I} \times \Z$, we have new quiver $\WUp_{[\mQ]}$ with folded coordinates.
\end{enumerate}
We call $\WUp_{[\mQ]}$ the {\em folded $AR$-quiver} associated to $[\mQ]$ and $\widehat{\Omega}_{[\mQ]}(\beta)=(\overline{i},p)$
the {\em folded coordinate} of $\beta$ with respect to $[\mQ]$.

\begin{example}
From \eqref{eq: Ex A} and Example \ref{Ex: UpQ}, we can obtain folded AR-quivers $\WUp_{[\mQ]}$ as follows:
\begin{enumerate}
\item[{\rm (1)}] $[\mQ]$ of type $A_{5}$ cases:
$$
\scalebox{0.65}{\xymatrix@C=1ex@R=1ex{
( \overline{i},p ) &-8&-7&-6&-5&-4&-3&-2&-1&0 &1& 2 \\
\overline{1}&&&\bullet\ar@{->}[ddrr]&& \bullet \ar@{->}[ddrr] &&\bullet\ar@{->}[ddrr]&& \bullet  &&\bullet\\  & \\
\overline{2}&\bullet\ar@{->}[dr]\ar@{->}[uurr]&&\bullet\ar@{->}[uurr]\ar@{->}[dr] &&\bullet\ar@{->}[dr]\ar@{->}[uurr]&& \bullet \ar@{->}[uurr]\ar@{->}[dr] && \bullet\ar@{->}[uurr]\ar@{->}[dr]\\
\overline{3} && \bigstar\ar@{->}[ur] && \bigstar\ar@{->}[ur] && \bigstar\ar@{->}[ur] && \bigstar\ar@{->}[ur] && \bigstar }} \ \
\scalebox{0.65}{\xymatrix@C=1ex@R=1ex{
( \overline{i},p ) &-9&-8&-7&-6&-5&-4&-3&-2&-1&0 &1& 2 \\
\overline{1}&&&&\bullet \ar@{->}[ddrr]&& \bullet \ar@{->}[ddrr] &&\bullet \ar@{->}[ddrr]&& \bullet && \bullet &&&& \\  & \\
\overline{2}&&\bullet\ar@{->}[dr]\ar@{->}[uurr]&&\bullet\ar@{->}[uurr]\ar@{->}[dr] &&\bullet\ar@{->}[dr]\ar@{->}[uurr]&& \bullet \ar@{->}[uurr]\ar@{->}[dr]&& \bullet\ar@{->}[uurr]\\
\overline{3}&\bigstar\ar@{->}[ur]&& \bigstar\ar@{->}[ur] && \bigstar\ar@{->}[ur] && \bigstar\ar@{->}[ur] && \bigstar\ar@{->}[ur]}}
$$
\item[{\rm (2)}] $[\mQ]$ of type $D_{5}$ cases:
$$
\scalebox{0.6}{\xymatrix@C=2ex@R=1ex{
( \overline{i},p ) & -9 & -8 & -7 & -6 & -5 & -4 & -3& -2 &-1 & 0 & 1 \\
\overline{1} &&  \bullet \ar@{->}[dr] && \bullet\ar@{->}[dr] && \bullet \ar@{->}[dr] &&  \bullet \ar@{->}[dr] &&  \bullet \\
\overline{2} & \bullet\ar@{->}[dr]\ar@{->}[ur] && \bullet\ar@{->}[dr]\ar@{->}[ur] && \bullet\ar@{->}[ur]\ar@{->}[dr] &&  \bullet \ar@{->}[ur]\ar@{->}[dr]&&  \bullet \ar@{->}[dr]\ar@{->}[ur] \\
\overline{3} && \bullet \ar@{->}[dr]\ar@{->}[ur]  && \bullet  \ar@{->}[dr]\ar@{->}[ur] &&  \bullet \ar@{->}[ur]\ar@{->}[dr]&&  \bullet \ar@{->}[ur]\ar@{->}[dr] &&  \bullet \ar@{->}[dr]\\
\overline{4} &&& \diamond \ar@{->}[ur] && \ast \ar@{->}[ur]&&  \diamond \ar@{->}[ur]&&  \ast \ar@{->}[ur]&&  \diamond}}
\scalebox{0.6}{\xymatrix@C=2ex@R=1ex{
( \overline{i},p ) & -9 & -8 & -7 & -6 & -5 & -4 & -3& -2 &-1 & 0 & 1 \\
\overline{1} &&  \bullet \ar@{->}[dr] && \bullet\ar@{->}[dr] && \bullet \ar@{->}[dr] &&  \bullet \ar@{->}[dr] &&  \bullet \\
\overline{2} & \bullet\ar@{->}[dr]\ar@{->}[ur] && \bullet\ar@{->}[dr]\ar@{->}[ur] && \bullet\ar@{->}[ur]\ar@{->}[dr] &&  \bullet \ar@{->}[ur]\ar@{->}[dr]&&  \bullet \ar@{->}[dr]\ar@{->}[ur] \\
\overline{3} && \bullet \ar@{->}[dr]\ar@{->}[ur]  && \bullet  \ar@{->}[dr]\ar@{->}[ur] &&  \bullet \ar@{->}[ur]\ar@{->}[dr]&&  \bullet \ar@{->}[ur]\ar@{->}[dr] &&  \bullet \ar@{->}[dr]\\
\overline{4} &&& \ast \ar@{->}[ur] && \diamond \ar@{->}[ur]&&  \ast \ar@{->}[ur]&&  \diamond \ar@{->}[ur]&&  \ast}}
$$
\end{enumerate}
\end{example}

\section{Positive root systems} In this section, we recall main results of \cite{Oh15E,OS16B,OS16C}, which investigated the positive root systems
by using newly introduced notions and (combinatorial) AR-quivers.

\subsection{Notions} For a reduced expression $\redez=s_{i_1}s_{i_2}\cdots s_{i_\N}$ of $w_0$, there exists the convex total order $<_{\redez}$ on $\PR$
defined as follows:
$$ \beta^{\redez}_{k} <_{\redez} \beta^{\redez}_{l}
\quad \text{if and only if $k <l$,}$$
where $\beta^{\redez}_{k} \seteq s_{i_1}\cdots s_{i_{k-1}}(\al_{i_k})$.

With the convex total order $<_{\redez}$, we identify
$\um=(\um_1,\um_2,\ldots,\um_\N) \in \Z_{\ge 0}^{\N}$  with
$$ \um_{\redez}\in\ (\Z_{\ge 0})^{|\PR|},$$ whose coordinate at $\beta^{\redez}_{k}$
is $m_k$.
For a sequence $\um$, we set $\wt(\um)=\sum_{i=1}^\N \um_i\beta^{\redez}_i\in\rl^+$.


\begin{definition}[\cite{Mc12,Oh15E}] We define the partial orders $<^\tb_{\redez}$ and $\prec^\tb_{[\redez]}$ on $\Z_{\ge 0}^{\N}$ as follows:
\begin{enumerate}
\item[{\rm (i)}] $<^\tb_{\redez}$ is the bi-lexicographical partial order induced by $<_{\redez}$. Namely, $\um<^\tb_{\redez}\um'$ if there exist
$j$ and $k$ ($1\le j\le k\le N$)
such that $\um_s=\um'_s$ for $1\le s<j$, $\um_j<\um'_j$
and $\um_{s}=\um'_{s}$ for $k<s\le N$, $\um_k<\um'_k$.
\item[{\rm (ii)}] For sequences $\um$ and $\um'$, $\um \prec^\tb_{[\redez]} \um'$ if and only if $\wt(\um)=\wt(\um')$ and
$\un<^\tb_{\redez'} \un'$ for all $\redez' \in [\redez]$,
where $\un$ and $\un'$ are sequences such that
$\un_{\redez'}=\um_{\redez}$ and $\un'_{\redez'}=\um_{\redez}$.
\end{enumerate}
\end{definition}


We call a sequence $\um$ a {\em pair} if $|\um|\seteq \sum_{i=1}^\N m_i=2$ and $m_i \le 1$ for $1\le i\le \N$. We mainly use the notation $\up$ for a pair.
We also write $\up$ as  $(\beta^{\redez}_{i_1},\beta^{\redez}_{i_2})$ or $(i_1,i_2)$
where $\up_{i_1}=\up_{i_2}=1$ and $i_1 \le i_2$.

\begin{definition} [\cite{Oh15E}]\hfill
\begin{enumerate}
\item[{\rm (i)}] A pair $\up$ is called {\it $[\redez]$-simple} if there exists no sequence $\um \in \Z^{\N}_{\ge 0}$ satisfying
$\um \prec^\tb_{[\redez]} \up$.
\item[{\rm (ii)}] A sequence $\um=(\um_1,\um_2,\ldots,\um_{\N}) \in \Z^{\N}_{\ge 0}$ is called {\it $[\redez]$-simple} if $\um=(\um_k \beta^\redez_k)$ for some
$1\le k \le \N$ or any pair $({i_1},{i_2})$
such that $\um_{i_1},\um_{i_2} >0$ is a $[\redez]$-simple pair.
\end{enumerate}
\end{definition}

\begin{definition}[\cite{Mc12,Oh15E}]  For a given $[\redez]$-simple sequence $\us=(s_1,\ldots,s_{\N}) \in \Z^{\N}_{\ge 0}$,
we say that a sequence $\um \in \Z^{\N}_{\ge 0}$ is called a {\em $[\redez]$-minimal sequence of $\us$} if $\um$ satisfies the following properties:
$$\text{$\us \prec^\tb_{\redez} \um$ and there exists no sequence $\um'\in \Z^{\N}_{\ge 0}$ such that
$\us \prec^\tb_{[\redez]} \um' \prec^\tb_{[\redez]}  \um$.}$$
\end{definition}

\begin{definition}[\cite{Oh15E}] \label{def: gdist}
The {\em $[\redez]$-distance} of a sequence $\um$,
denoted by $\dist_{[\redez]}(\um)$,
is the largest integer $k\ge0$ such that
there exists a family of
sequences $\{\um^{(i)}\}_{0\le i\le k}$ satisfying
$$\um^{(0)} \prec^\tb_{[\redez]} \cdots \prec^\tb_{[\redez]} \um^{(k)}=\um.$$
Note that $\um^{(0)} $ should be $[\redez]$-simple.
\end{definition}

\begin{definition} [\cite{Oh15E}] \label{def: jj_0-socle}
For a pair $\up$, the {\em $[\redez]$-socle} of $\up$, denoted by $\soc_{[\redez]}(\up)$, is a $[\redez]$-simple sequence $\us$ satisfying
$\us \preceq^\tb_{[\redez]} \up$ if such an $\us$ exists uniquely.
\end{definition}

\subsection{Socles, minimal pairs and folded distance polynomial}

\begin{proposition}[{\cite[Lemma 2.6]{BKM12}}]\label{pro: BKM minimal}
For $\ga \in \PR \setminus \Pi$ and any $\redez$ of $w_0$, a $[\redez]$-minimal sequence of $\ga$ is indeed a pair $(\al,\beta)$ for some $\al,\beta \in \PR$ such that
$\al+\beta = \ga$.
\end{proposition}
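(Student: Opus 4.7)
The plan is to show that any $[\redez]$-minimal sequence $\um$ of $(\ga)$ must satisfy $|\um|=2$; combined with the weight constraint $\wt(\um)=\ga$ and the fact that $2\eta\notin\PR$ in any finite root system, this forces $\um=(\al,\beta)$ with distinct $\al,\beta\in\PR$ and $\al+\beta=\ga$.

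First I would verify that $[\redez]$-minimal sequences exist. Since $\ga\in\PR\setminus\Pi$, write $\ga=\al+\beta$ with $\al,\beta\in\PR$; convexity of every total order $<_{\redez'}$ for $\redez'\in[\redez]$ forces one of $\al,\beta$ strictly below $\ga$ and the other strictly above, so $(\ga)\prec^{\tb}_{[\redez]}(\al,\beta)$ and the set of sequences strictly above $(\ga)$ is nonempty. Now let $\um$ be any $[\redez]$-minimal sequence, and list its support with multiplicity as $\eta_1\le_{\redez}\cdots\le_{\redez}\eta_r$ with $r=|\um|$. The weight constraint rules out $\um_\ga\ge 1$, since otherwise either $\wt(\um)>\ga$ or $\um=(\ga)$, neither of which is compatible with $(\ga)\prec^{\tb}_{[\redez]}\um$. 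Hence $\ga$ is absent from the support, and $(\ga)\prec^{\tb}_{[\redez]}\um$ translates, by inspecting the leftmost and rightmost disagreement positions, into $\eta_1<_{\redez}\ga<_{\redez}\eta_r$; in particular $r\ge 2$. The case $r=2$ immediately yields $\um=(\eta_1,\eta_2)$ with $\eta_1+\eta_2=\ga$.

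The contradiction step treats $r\ge 3$. I would construct an intermediate $\um'$ with $(\ga)\prec^{\tb}_{[\redez]}\um'\prec^{\tb}_{[\redez]}\um$, contradicting minimality. The construction: find two support roots $\eta_i\le_{\redez}\eta_j$ with $\delta:=\eta_i+\eta_j\in\PR$ and $\delta\ne\ga$, and replace one occurrence each of $\eta_i,\eta_j$ in $\um$ by $\delta$ to form $\um'$. Weight is preserved and $|\um'|=|\um|-1$. Convexity forces $\eta_i<_{\redez'}\delta<_{\redez'}\eta_j$ for every $\redez'\in[\redez]$, so the support of $\um'$ still strictly straddles $\ga$, giving $(\ga)\prec^{\tb}_{[\redez]}\um'$; and a direct bi-lexicographic comparison at positions $\eta_i$ (where $\um$ strictly dominates $\um'$) and $\eta_j$ (likewise) yields $\um'\prec^{\tb}_{[\redez]}\um$ uniformly across the class.

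The hard part will be the root-theoretic existence claim for such a pair $\eta_i,\eta_j$: in any decomposition $\ga=\sum_k m_k\eta_k$ of a positive root into at least three positive-root summands counted with multiplicity, two of the $\eta_k$'s must sum to an element of $\PR\setminus\{\ga\}$. In the simply laced case this follows from a pairwise-orthogonality contradiction, exploiting that $\ga$ itself is a root and that mutually orthogonal positive roots cannot combine nontrivially to a single root. In the non-simply laced case one argues by a short case analysis on the rank-two sub-root-system spanned by any two support roots, invoking the finiteness of root strings and the classification of rank-two sub-systems.
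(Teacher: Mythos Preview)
The paper does not give its own proof of this proposition; it simply cites \cite[Lemma 2.6]{BKM12}. So there is nothing to compare against in the paper itself. I will therefore assess your argument on its own merits.

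Your overall strategy---assume $|\um|\ge 3$, combine two support roots $\eta_i,\eta_j$ into $\delta=\eta_i+\eta_j\in\PR$, and exhibit the resulting $\um'$ as a strictly intermediate sequence---is the standard one and will succeed, but two of your justifications are incomplete as written.

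First, the step ``convexity forces $\eta_i<_{\redez'}\delta<_{\redez'}\eta_j$, so the support of $\um'$ still strictly straddles $\gamma$'' does not follow. Sandwiching $\delta$ between $\eta_i$ and $\eta_j$ says nothing about where $\delta$ (or the remaining $\eta_k$'s) sit relative to $\gamma$; if you happened to remove the only element of $\supp(\um)$ lying on one side of $\gamma$, your reasoning gives no replacement. What you actually need is the following general fact: \emph{any} sequence $\un\neq(\gamma)$ with $\wt(\un)=\gamma$ has support straddling $\gamma$ in every convex order. To see this, suppose all support roots satisfy $\eta<_{\redez'}\gamma$. Write $\gamma=\beta_k^{\redez'}$ and $w=s_{i_1}\cdots s_{i_{k-1}}$; then $w^{-1}(\gamma)=\alpha_{i_k}$ is a positive simple root, while each $w^{-1}(\eta)$ is negative (since $\eta$ lies in the inversion set of $w$), making $\alpha_{i_k}$ a sum of negative roots---a contradiction. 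The symmetric argument rules out all $\eta>_{\redez'}\gamma$. With this lemma in hand, $(\gamma)\prec^{\tb}_{[\redez]}\um'$ is automatic and your contradiction goes through.

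Second, your root-theoretic existence claim is correct but your sketch undersells the simply laced case and leaves the doubly/triply laced cases vague. A uniform argument: from $\sum_{i<j}(\eta_i,\eta_j)=\tfrac12\bigl((\gamma,\gamma)-\sum_i(\eta_i,\eta_i)\bigr)$, one checks that for $r\ge3$ this sum is negative in types $ADE$, $B$, $C$, $F_4$ (since the long/short ratio is at most $2$), so some $(\eta_i,\eta_j)<0$ and hence $\eta_i+\eta_j\in\PR$. Only $G_2$ with $r=3$, $\gamma$ long, and all $\eta_i$ short gives $\sum_{i<j}(\eta_i,\eta_j)=0$, and there a direct check on the three short positive roots shows some pair still has negative inner product. (Also note that $\eta_i+\eta_j=\gamma$ is impossible for $r\ge3$, since the remaining summands would have to sum to zero.)
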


\begin{theorem} [\cite{OS16B,OS16C}] For any $[\mQ] \in \lf \mQ \rf$ and any pair $\up$, we have the followings$\colon$
\begin{enumerate}
\item[{\rm (1)}] $\soc_{[\mQ]}(\up)$ is well-defined.
\item[{\rm (2)}] $\dist_{[\mQ]}(\up) \le 2$. In particular, if $\dist_{[\mQ]}(\up)=2$, there exist a  unique $\um$ and a unique chain of length $3$
such that
$$\soc_{[\mQ]}(\up) \prec^{\tb}_{[\mQ]} \um \prec^{\tb}_{[\mQ]} \up.$$
\item[{\rm (3)}] If $\dist_{[\mQ]}(\up) \le 1$,
then
$\up$ is a $[\mQ]$-minimal pair of $\soc_{[\mQ]}(\up)$.
\end{enumerate}
\end{theorem}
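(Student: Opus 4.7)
The plan is to establish all three parts simultaneously via a finite case analysis on the folded coordinates of the components of $\up$ in $\WUp_{[\mQ]}$, in the spirit of the untwisted treatment of \cite{Oh15E} but enriched to account for the folding. First I reduce to the non-trivial case: if $\up$ is itself $[\mQ]$-simple, then $\soc_{[\mQ]}(\up)=\up$, $\dist_{[\mQ]}(\up)=0$, and all assertions are vacuous. So I assume $\up=(\al,\beta)$ with $\al\prec_{[\mQ]}\beta$ is not $[\mQ]$-simple. By Proposition~\ref{pro: BKM minimal}, any $[\mQ]$-minimal sequence strictly below $\up$ must be a pair $(\al',\beta')$ with $\al'+\beta'=\al+\beta=:\ga\in\PR$; hence the entire order ideal below $\up$ under $\prec^\tb_{[\mQ]}$ lives in the finite set of pairs whose components sum to $\ga$.

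Next, I enumerate these decompositions and compare them under $\prec^\tb_{[\mQ]}$. For $[\mQ]$ of type $A_{2n-1}$, positive roots are intervals $[a,b]$, so a decomposition of $\ga=[a,b]$ into a pair of positive roots is determined by a single cut point, and there are at most $b-a$ candidate pairs. For $[\mQ]$ of type $D_{n+1}$ the situation is slightly richer due to the forked end of the Dynkin diagram, but the number of decompositions remains bounded and explicit. For each candidate pair I compute its folded coordinate via the explicit recipe for $\WUp_{[\mQ]}$ recalled in Section~2, and compare candidates using the characterization that $\um\prec^\tb_{[\mQ]}\um'$ can be tested by reading reduced expressions $\redez'\in[\mQ]$ compatible with the arrows of $\Upsilon_{[\mQ]}$.

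The expected outcome of this analysis is uniform: the set of pairs strictly below $\up$ under $\prec^\tb_{[\mQ]}$ is always totally ordered and has at most two elements. When this set is empty or a singleton, $\up$ itself is a $[\mQ]$-minimal pair of the unique element just below it, yielding (3); when it has exactly two elements, they form the chain $\soc_{[\mQ]}(\up)\prec^\tb_{[\mQ]}\um\prec^\tb_{[\mQ]}\up$ with $\um$ uniquely determined, yielding (2). The well-definedness of $\soc_{[\mQ]}(\up)$ in (1) is then an immediate corollary of the same output.

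The main obstacle will be controlling the configurations where the folding actually produces a genuine intermediate term $\um$. These are the cases in which $\ga$ admits both a decomposition crossing the folded nodes ($\{n-1,n\}$ in type $A_{2n-1}$ or $\{n,n+1\}$ in type $D_{n+1}$) and one avoiding them; it is precisely here that distance~$2$ arises, in contrast with the untwisted setting where the analogous bound is~$1$. To rule out chains of length greater than~$2$ and to enforce uniqueness of $\soc_{[\mQ]}(\up)$ and of $\um$, I plan to exhibit explicitly, for each such folded configuration, the candidate decompositions together with reduced expressions in $[\mQ]$ witnessing that the candidates fit into a single chain of length at most~$2$. Once (1) and (2) are proved, (3) follows directly from the definitions of $\dist_{[\mQ]}$ and of a $[\mQ]$-minimal pair.
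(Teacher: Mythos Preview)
The paper does not prove this theorem; it is quoted from \cite{OS16B,OS16C}. So there is no in-paper argument to match, and your proposal has to stand on its own. Unfortunately the reduction in your first paragraph is incorrect, and the error propagates through the rest of the plan.

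Your key step is: ``By Proposition~\ref{pro: BKM minimal}, any $[\mQ]$-minimal sequence strictly below $\up$ must be a pair $(\al',\beta')$ with $\al'+\beta'=\al+\beta=:\ga\in\PR$; hence the entire order ideal below $\up$ under $\prec^\tb_{[\mQ]}$ lives in the finite set of pairs whose components sum to $\ga$.'' This misreads Proposition~\ref{pro: BKM minimal}. That proposition concerns $[\redez]$-minimal sequences \emph{of a single root} $\gamma\in\PR$, i.e.\ sequences covering the singleton $\gamma$; it says nothing about what lies below a pair $\up$. In particular it does not force $\al+\beta\in\PR$, nor does it force every $\um\prec^\tb_{[\mQ]}\up$ to be a pair. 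Both conclusions are false in the twisted setting you are studying:
\begin{itemize}
\item There exist non-$[\mQ]$-simple pairs $(\al,\beta)$ with $\al+\beta\notin\PR$; see cases (2) and (3) in the proof of Theorem~\ref{thm: simple simple}. Your subsequent paragraph, which enumerates ``decompositions of $\gamma=[a,b]$'' by cut points, never applies to these.
\item The intermediate sequence $\um$ in part (2) of the theorem is \emph{not} always a pair. In type $A_{2n-1}$ with $\dist_{[\mQ]}(\up)=2$ and $\al+\beta\notin\PR$, the unique $\um$ with $\soc_{[\mQ]}(\up)\prec^\tb_{[\mQ]}\um\prec^\tb_{[\mQ]}\up$ is a \emph{triple} $(\mu,\nu,\eta)$; this is exactly the case worked out in the proof of Theorem~\ref{thm: simple simple}, item (2). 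Your plan, which searches only among pairs, would miss $\um$ entirely and hence could not even detect that $\dist_{[\mQ]}(\up)=2$.
\item Even when $\al+\beta\in\PR$, the socle is the \emph{singleton} $(\al+\beta)$, not a pair, so it again lies outside the set you enumerate.
\end{itemize}

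What the actual proofs in \cite{OS16B,OS16C} do (and what Section~\ref{sec: Simple head and socle} of the paper relies on) is a case analysis on $\WUp_{[\mQ]}$ that tracks \emph{all} sequences $\um$ of weight $\al+\beta$, not just pairs, and classifies the possible shapes of the order ideal below $\up$: a singleton socle when $\al+\beta\in\PR$, a pair socle $(\al',\beta')$ otherwise, and in the distance-$2$ case of type $A_{2n-1}$ a uniquely determined triple sandwiched in between. To salvage your approach you would need to drop the appeal to Proposition~\ref{pro: BKM minimal}, allow $\al+\beta\notin\PR$, and enlarge the enumeration to include singletons and triples before carrying out the comparison under $\prec^\tb_{[\mQ]}$.
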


For the involutions $\vee$ in \eqref{eq: B_n} and \eqref{eq: C_n}, we can identify $\overline{I}$, the orbit space of $\vee$,
with $\{ 1,2,\ldots,n\}$ and the order of $\vee$
is equal to $\overline{\mathsf{d}}\seteq 2$. The following propositions tell the characterization of the
positions of minimal pairs for $\ga \in \PR$ inside of $\WUp_{[\mQ]}$.

\begin{proposition}[{\cite[Proposition 7.8]{OS16B}}] Let us fix $[\mQ] \in \lf \mQ \rf$ of finite type $A_{2n-1}$.
For $\al,\beta,\ga \in \PR$ with $\widehat{\Omega}_{[\mQ]}(\al)=(i,p)$, $\widehat{\Omega}_{[\mQ]}(\beta)=(j,q)$
$\widehat{\Omega}_{[\mQ]}(\ga)=(k,r)$ and  $\al+\beta =\ga$, $(\al,\beta)$ is a $[\mQ]$-minimal pair of $\ga$
if and only if one of the following conditions holds$\colon$
\begin{eqnarray}&&
\left\{\hspace{1ex}\parbox{75ex}{
\begin{enumerate}
\item[{\rm (i)}] 
$\ell \seteq \max(i,j,k) \le n-1$, $i+j+k=2\ell$
and
$$ \left( \dfrac{q-r}{2},\dfrac{p-r}{2} \right) =
\begin{cases}
\big( -i,j \big), & \text{ if } \ell = k,\\
\big( i-(2n-1),j \big), & \text{ if } \ell = i,\\
\big( -i,2n-1-j  \big), & \text{ if } \ell = j.
\end{cases}
$$
\item[{\rm (ii)}] 
$s \seteq \min(i,j,k) \le n-1$, the others are the same as $n$ and
$$ (q-r,p-r) =
\begin{cases}
\big( -2(n-1-k)+1,2(n-1-k)-1 ), & \text{ if } s = k,\\
\big( -4i-4,2(n-1-i)-1  ), & \text{ if } s = i,\\
\big( -2(n-1-j)+1, 4j+4), & \text{ if } s = j.
\end{cases}
$$
\end{enumerate}
}\right. \label{eq: Dorey folded coordinate Bn}
\end{eqnarray}
\end{proposition}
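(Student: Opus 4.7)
The plan is to reduce the characterization to an essentially finite combinatorial check on the AR-quiver $\Upsilon_{[\mQ]}$ and then transfer the result under the folding $\Omega_{[\mQ]}\mapsto\widehat{\Omega}_{[\mQ]}$. I would first observe that the reflection functors $r_i$ act transitively on the twisted adapted classes in $\lf\mQ\rf$ and translate both $\widehat{\Omega}_{[\mQ]}$ and the coordinate differences appearing in \eqref{eq: Dorey folded coordinate Bn} in a uniform manner, since they are realised by combinatorial shifts at the boundary of $\Upsilon_{[\mQ]}$. The property of being a $[\mQ]$-minimal pair is preserved by reflection functors as well, so it suffices to verify the statement for a single canonical $[\mQ]$ in $\lf\mQ\rf$, which I would take to be a class obtained from a Coxeter element $s_1 s_2 \cdots s_n$ of $A_{2n-1}$ via the doubling construction recalled in Section~2.

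Next, by Proposition \ref{pro: BKM minimal}, any $[\mQ]$-minimal sequence of $\gamma\in\PR\setminus\Pi$ is automatically a pair $(\alpha,\beta)$ with $\alpha+\beta=\gamma$. Writing a positive root of $A_{2n-1}$ as an interval $[a,d]=\sum_{k=a}^{d}\alpha_k$, its pair decompositions are exactly the splittings $[a,b-1]+[b,d]$ with $a<b\le d$. Using the description of $\Upsilon_{[\mQ]}$ and its folding $\WUp_{[\mQ]}$ recalled in Section~2, I would tabulate the folded coordinates of $[a,b-1]$ and $[b,d]$ for each admissible $b$. A pair $(\alpha,\beta)$ is $[\mQ]$-minimal precisely when no competing sequence $\um$ satisfies $\um\prec^{\tb}_{[\mQ]}(\alpha,\beta)$; this can be checked directly via the bi-lexicographical orders $<^{\tb}_{\redez'}$ induced from each $\redez'\in[\mQ]$, since $(\alpha,\beta)$ is minimal if and only if no intermediate pair of letters can be inserted between the letter corresponding to $\gamma$ and the two letters corresponding to $\alpha,\beta$ in some reading of $\Upsilon_{[\mQ]}$.

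The analysis then naturally splits into the two configurations of (i) and (ii). Condition (i), with $\max(i,j,k)=\ell\le n-1$ and $i+j+k=2\ell$, captures the case where all three of $\alpha,\beta,\gamma$ avoid the central $\bigstar$-vertices of $\Upsilon_{[\mQ]}$; here the folding merely relabels coordinates and the three sub-cases $\ell\in\{k,i,j\}$ correspond to the three possible positions of the apex of the V-shape that a minimal triple traces out in a type-$A$ AR-quiver, reproducing the coordinate identities known from the unfolded picture. Condition (ii), where two of $i,j,k$ equal $n$ and the third is strictly smaller, treats the case where two of $\alpha,\beta,\gamma$ sit at $\bigstar$-vertices; the doubled spacing between consecutive $\bigstar$-vertices in the $\Z$-coordinate of $\WUp_{[\mQ]}$, as opposed to the $\frac{1}{2}\Z$-coordinate of $\Upsilon_{[\mQ]}$, is precisely what produces the shifts $\pm 4i\pm 4$ and $\pm 2(n-1-k)\pm 1$ appearing in the formula.

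The main obstacle will be case (ii). The interlacing between $\bigstar$-vertices and ordinary vertices along the convex order is not transparent from $\WUp_{[\mQ]}$ alone, so to rule out competing decompositions I would need to descend to the unfolded $\Upsilon_{[\mQ]}$, list the successive vertices along each relevant diagonal, and verify by hand that no pair $\um$ lies strictly between the candidate minimal pair and $\gamma$ in $\prec^{\tb}_{[\mQ]}$. This is a finite but delicate inspection; my plan mirrors the casework carried out in \cite{OS16B} for Dorey's rule of type $B_n^{(1)}$, the ultimate combinatorial content being the coordinate identities displayed in \eqref{eq: Dorey folded coordinate Bn}.
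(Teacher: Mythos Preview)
The paper does not contain a proof of this proposition: it is stated with a citation to \cite[Proposition~7.8]{OS16B} and used as a black box. There is therefore no in-paper proof to compare your proposal against.

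That said, your sketch is a reasonable outline of how the result is established in the cited reference: reduction via reflection functors to a single canonical $[\mQ]$, the interval description of $\PR_{A_{2n-1}}$, and a case split according to whether or not the relevant roots sit at the central $\bigstar$-vertices of $\Upsilon_{[\mQ]}$. One point to be careful about is your claim that reflection functors ``translate \ldots\ the coordinate differences appearing in \eqref{eq: Dorey folded coordinate Bn} in a uniform manner''; this requires justification, since the reflection at a sink alters the labeling of roots by $s_i$ and moves a boundary vertex across the quiver, so one must check that the specific numerical conditions are genuinely invariant rather than merely that minimality is preserved. If you intend to supply a self-contained proof here rather than defer to \cite{OS16B}, that invariance check (or alternatively the full case analysis for one fixed $[\mQ]$, including the $\bigstar$-interlacing you flag in case~(ii)) is the part that would need to be written out in detail.
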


\begin{proposition}[{\cite[Corollary 8.26]{OS16C}}] Let us fix $[\mQ] \in \lf \mQ \rf$ of finite type $D_{n+1}$.
For $\al,\beta,\ga \in \PR$ with $\widehat{\Omega}_{[\mQ]}(\al)=(i,p)$, $\widehat{\Omega}_{[\mQ]}(\beta)=(j,q)$
$\widehat{\Omega}_{[\mQ]}(\ga)=(k,r)$ and  $\al+\beta =\ga$, $(\al,\beta)$ is a $[\mQ]$-minimal pair of $\ga$
if and only if one of the following conditions holds$\colon$
\begin{eqnarray}&&
\left\{\hspace{1ex}\parbox{75ex}{
 $\ell \seteq \max(i,j,k) \le n$, $i+j+k=2\ell$
and
$$ \left( q-r,p-r \right) =
\begin{cases}
\big( -i,j \big), & \text{ if } \ell = k,\\
\big( i-(2n+2),j \big), & \text{ if } \ell = i,\\
\big( -i,2n+2-j  \big), & \text{ if } \ell = j.
\end{cases}
$$
}\right. \label{eq: Dorey folded coordinate Cn}
\end{eqnarray}
\end{proposition}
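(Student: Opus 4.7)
By Proposition \ref{pro: BKM minimal} above, any $[\mQ]$-minimal sequence of $\ga\in\PR\setminus\Pi$ is automatically a pair $(\al,\beta)$ with $\al+\beta=\ga$, so we only need to characterize which such pairs are $[\mQ]$-minimal. Since the folded coordinate $\widehat{\Omega}_{[\mQ]}$ agrees with the original coordinate on $\Upsilon_{[\mQ]}$ up to the identification of the residues $n,n+1$ with $\overline{n}$, the $[\mQ]$-minimality of $(\al,\beta)$ is equivalent to a purely combinatorial statement about $\Upsilon_{[\mQ]}$. This reduction lets me argue entirely inside the (unfolded) combinatorial AR-quiver of type $D_{n+1}$.

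First I would invoke the construction from \cite{OS16C} recalled above: $\Upsilon_{[\mQ]}$ is built by placing the upside-down copy $\Gamma^{\updownarrow}_Q$ to the left of $\Gamma_Q$ for some Dynkin quiver $Q$ of type $A_n$ and then redistributing the residues in the bottom row alternately between $n$ and $n+1$. The mesh (additive) structure of $\Upsilon_{[\mQ]}$, inherited from the classical AR-theory of the $A_n$-piece, lists all pairs $(\al,\beta)$ with $\al+\beta=\ga$ that are candidates for a $[\mQ]$-minimal pair of $\ga$. A direct translation of the positions read off from the mesh, once one converts $(i,p)\in I\times \tfrac{1}{2}\Z$ to the folded coordinate $(\overline{i},p)\in \overline{I}\times \Z$, yields exactly the three coordinate patterns in \eqref{eq: Dorey folded coordinate Cn} according as $\max(i,j,k)$ is $k$, $i$, or $j$. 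The identity $i+j+k=2\ell$ simply records that the three coordinates sit on a common mesh of $\Upsilon_{[\mQ]}$. For the converse, once the mesh description is established, the fact that $\dist_{[\mQ]}(\up)\le 2$ (from the theorem cited in Section 3.2) together with a direct inspection rules out any strictly smaller pair in $\prec^{\tb}_{[\mQ]}$, proving $[\mQ]$-minimality of any pair matching a displayed pattern.

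The main obstacle will be handling the fork of the Dynkin diagram: when $\ga$ or one of $\al,\beta$ has residue in $\{n,n+1\}$, the alternating redistribution at the bottom row departs from strict $A_n$-type mesh behaviour, and the identification $\{n,n+1\}\mapsto \overline{n}$ can collapse two distinct vertices of $\Upsilon_{[\mQ]}$ to the same folded coordinate. To treat these cases uniformly I would first fix a canonical representative of $\lf \mQ\rf$ (for instance one built from the twisted Coxeter element $s_1s_2\cdots s_n$ in the explicit description of $\lf \mQ\rf$ given in the remark above), verify the proposition on that class by an explicit coordinate computation, and then propagate the statement to an arbitrary $[\mQ]\in\lf \mQ\rf$ using the reflection maps $r_i$, which preserve both $\prec^{\tb}_{[\mQ]}$ and the mesh structure while shifting the folded coordinates in a controlled way. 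In contrast to the $A_{2n-1}$ case, no auxiliary ``case (ii)'' appears here: this has to be justified by checking that when two fork vertices are identified under the folding no new minimal pair is produced, which is where the bulk of the case analysis lies.
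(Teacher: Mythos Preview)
The paper does not contain a proof of this proposition: it is stated as a citation of \cite[Corollary 8.26]{OS16C} and is used as an input to the later arguments (Theorem~\ref{thm: Dorey}, Theorem~\ref{thm: exact functor}, etc.), not as something established here. There is therefore no proof in the present paper against which to compare your proposal.

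That said, a few remarks on the proposal itself. Your overall plan---work in the unfolded quiver $\Upsilon_{[\mQ]}$, use the $\Gamma_Q^{\updownarrow}\sqcup\Gamma_Q$ description of $\Upsilon_{[\mQ]}$ from \cite{OS16C}, verify the coordinate patterns on a canonical representative, and propagate via reflection functors---is the natural strategy and is essentially the one carried out in \cite{OS16C}. One point to tighten: you say the folded coordinate ``agrees with the original coordinate on $\Upsilon_{[\mQ]}$ up to the identification of the residues $n,n+1$ with $\overline{n}$,'' but in the $D_{n+1}$ case the folding does not change the second coordinate at all (unlike the $A_{2n-1}$ case, where $p/2\mapsto p$); the map is simply $(i,p)\mapsto(\overline i,p)$. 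More substantively, your claim that ``no auxiliary case (ii) appears here'' is correct, but the justification you give (checking that identifying two fork vertices produces no new minimal pair) is not quite the right mechanism: the absence of a second case is because in $D_{n+1}$ the folded index set $\overline I$ has uniform behaviour (every $\overline i$ satisfies $\overline i\le n$), whereas in $A_{2n-1}$ the middle index $n$ behaves differently from the others. The fork analysis you describe is needed instead to verify that the three displayed patterns in \eqref{eq: Dorey folded coordinate Cn} really exhaust the $[\mQ]$-minimal pairs, and this is where the detailed case work in \cite{OS16C} takes place.
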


\begin{definition}[{ \cite[Definition 8.7]{OS16B}}]
For a folded AR-quiver $\WUp_{[\mQ]}$, indices $\ov{k},\ov{l} \in \ov{I}$ and an integer $t \in \Z_{\ge 1}$,
we define the subset $\Phi_{[\mQ]}(\ov{k},\ov{l})[t]$ of $\PR \times \PR$ as follows:

A pair $(\alpha,\beta)$ is contained in $\Phi_{[\mQ]}(\ov{k},\ov{l})[t]$ if $\alpha \prec_{[\mQ]} \beta$ or $\beta \prec_{[\mQ]} \al$ and
$$\{ \widehat{\phi}_{[\mQ]}(\al),\widehat{\phi}_{[\mQ]}(\beta) \}=\{ (\ov{k},a), (\ov{l},b)\} \quad \text{ such that } \quad |a-b|=t.$$
\end{definition}

\begin{proposition} [\cite{OS16B,OS16C}] For any $(\al^{(1)},\beta^{(1)}), \ (\al^{(2)},\beta^{(2)})  \in \Phi_{[\mQ]}(\ov{k},\ov{l})[t]$, we have
$$\dist_{[\mQ]}(\al^{(1)},\beta^{(1)})=\dist_{[\mQ]}(\al^{(2)},\beta^{(2)}).$$
Thus the notion
$$ o^{[\mQ]}_t(\ov{k},\ov{l}) \seteq \dist_{[\mQ]}(\al,\beta)  \quad \text{ for any } (\al,\beta) \in \Phi_{[\mQ]}(\ov{k},\ov{l})[t]$$
is well-defined.
\end{proposition}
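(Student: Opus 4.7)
The plan is to reduce the well-definedness to a translation invariance of the distance in the folded AR-quiver $\WUp_{[\mQ]}$, where the translation is implemented by the reflection functors acting on $\lf \mQ \rf$. Two pairs $(\al^{(1)},\beta^{(1)})$ and $(\al^{(2)},\beta^{(2)})$ in $\Phi_{[\mQ]}(\ov{k},\ov{l})[t]$ have identical \emph{residues} $\{\ov{k},\ov{l}\}$ in $\WUp_{[\mQ]}$ and identical horizontal separation $t$; they differ only by a global horizontal shift of the folded coordinates. Such a shift is realised by iterating the right action of an appropriate sequence of reflection functors $r_i$ corresponding to one cycle of the twisted Coxeter element defining $[\mQ]$, which returns us to $[\mQ]$ itself while translating every pair by a fixed amount.

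The core technical ingredient that I would establish first is the following invariance lemma: if $i$ is a sink (resp.\ source) of $[\mQ]$, then $r_i$ induces a weight-preserving bijection on sequences that intertwines the bi-lexicographic partial orders $\prec^\tb_{[\mQ]}$ and $\prec^\tb_{r_i[\mQ]}$; in particular, $[\mQ]$-simple sequences correspond to $r_i[\mQ]$-simple sequences under $r_i$. Once this is established, the definition of $\dist$ via chains yields immediately
\[
\dist_{r_i[\mQ]}(r_i(\up)) \;=\; \dist_{[\mQ]}(\up)
\]
for every pair $\up$ to which $r_i$ applies. Iterating this along the sequence of reflections realising a full twisted-Coxeter cycle shows that $\dist_{[\mQ]}$ is invariant under the horizontal translation of pairs in $\WUp_{[\mQ]}$, which is exactly the conclusion sought.

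The main obstacle is the invariance lemma itself, i.e.\ checking that $r_i$ is compatible with $\prec^\tb$. Fortunately, the theorem preceding the statement forces $\dist_{[\mQ]}(\up) \in \{0,1,2\}$, so it suffices to verify that the three properties ``\emph{$\up$ is $[\mQ]$-simple}'', ``\emph{$\up$ is a $[\mQ]$-minimal pair of its sum}'' and ``\emph{$\dist_{[\mQ]}(\up) = 2$}'' are all preserved under $r_i$. For the first two, the translation-invariant Dorey-type characterisations of minimal pairs in terms of folded coordinates given in \eqref{eq: Dorey folded coordinate Bn} and \eqref{eq: Dorey folded coordinate Cn}, combined with Proposition~\ref{pro: BKM minimal} (minimal sequences of a root are pairs of roots summing to it), reduce the question to a bookkeeping on coordinate differences, which are untouched by $r_i$. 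The distance-$2$ case then follows from the uniqueness of the intermediate $\um$ (part~(2) of the structure theorem quoted above), since its own folded-coordinate data is determined by $\up$.

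Finally, combining the invariance lemma with the translation step completes the proof: since $(\al^{(1)},\beta^{(1)})$ and $(\al^{(2)},\beta^{(2)})$ are related by a translation, iterating the Coxeter-cycle reflections yields equal distances, so $o^{[\mQ]}_t(\ov{k},\ov{l})$ is indeed well-defined.
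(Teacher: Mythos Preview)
The paper does not give its own proof of this proposition: it is stated with a citation to \cite{OS16B,OS16C} and no argument is supplied here. So there is no in-paper proof to compare your proposal against; the result is imported wholesale from the cited references.

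That said, your strategy is in the right spirit and is very likely close to what the cited papers do: invariance of $\dist_{[\mQ]}$ under reflection functors, combined with the fact that the minimal-pair characterisations \eqref{eq: Dorey folded coordinate Bn} and \eqref{eq: Dorey folded coordinate Cn} depend only on coordinate \emph{differences}, is the natural mechanism. One point in your outline deserves more care, however. You assert that two pairs in $\Phi_{[\mQ]}(\ov{k},\ov{l})[t]$ ``differ only by a global horizontal shift of the folded coordinates''. This is not literally true when $\ov{k}\neq\ov{l}$: one pair may sit at $\{(\ov{k},a),(\ov{l},a+t)\}$ while another sits at $\{(\ov{k},a'+t),(\ov{l},a')\}$, and these two configurations are not related by a pure translation in $\WUp_{[\mQ]}$. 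To close this gap you must either (i) argue that the comparability hypothesis $\al\prec_{[\mQ]}\beta$ or $\beta\prec_{[\mQ]}\al$, together with the shape of $\WUp_{[\mQ]}$, forces a unique relative orientation of the two residues, or (ii) show that after a suitable number of twisted-Coxeter cycles the involution $\vee$ (and hence a swap of the two orientations) is realised, so that both configurations are reached. Either route is workable, but the point should not be glossed over.

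A second, smaller caveat: you should check explicitly that a full twisted-Coxeter cycle of reflection functors returns $[\mQ]$ to itself (rather than to a different class in $\lf\mQ\rf$) while implementing the claimed translation on $\WUp_{[\mQ]}$. This is true, and is essentially the content of the algorithm (A1)--(A3) recalled in the Appendix together with the description of $\lf\mQ\rf$ as a concatenation of $\vee$-twisted powers of a fixed twisted Coxeter element, but it is a nontrivial structural input that your sketch uses without justification.
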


\begin{definition}[{\cite[Definition 8.9]{OS16B}}] \label{def: Dist poly Q}
For $\ov{k},\ov{l} \in \ov{I}$ and a folded AR-quiver $\WUp_{[\mQ]}$, we define a polynomial $\widehat{D}^{[\mQ]}_{\ov{k},\ov{l}}(z) \in \ko[z]$ as follows:
Let $q$ be an indeterminate, $q_s^{\ov{\mathsf{d}}}=q_s^2=q$ and
$\mathtt{o}^{[\mQ]}_t(\ov{k},\ov{l}) \seteq \lceil o^{[\mQ]}_t(\ov{k},\ov{l}) / \ov{\mathsf{d}} \rceil $.
\begin{enumerate}
\item[{\rm (i)}] When $[\mQ]$ is of type $A_{2n-1}$,
$\widehat{D}^{[\mQ]}_{\ov{k},\ov{l}}(z) \seteq \displaystyle\prod_{ t \in \Z_{\ge 0} } (z- (-1)^{\ov{k}+\ov{l}}(q_s)^{t})^{\mathtt{o}^{[\mQ]}_t(\ov{k},\ov{l})}.$
\item[{\rm (ii)}] When $[\mQ]$ is of type $D_{n+1}$,
$\widehat{D}^{[\mQ]}_{\ov{k},\ov{l}}(z) \seteq \displaystyle\prod_{ t \in \Z_{\ge 0} } (z- (-q_s)^{t})^{\mathtt{o}^{[\mQ]}_t(\ov{k},\ov{l})}.$
\end{enumerate}
\end{definition}

\begin{proposition}[{\cite{OS16B,OS16C}}] \label{prop: DQ DQ'}
For $\ov{k},\ov{l} \in \ov{I}$ and any twisted adapted classes $[\mQ]$ and $[\mQ']$ in $\lf \mQ \rf$, we have
$$\widehat{D}^{[\mQ]}_{\ov{k},\ov{l}}(z)=\widehat{D}^{[\mQ']}_{\ov{k},\ov{l}}(z).$$
\end{proposition}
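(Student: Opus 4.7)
The plan is to reduce the statement to invariance under a single reflection map and then verify that both the combinatorial data indexing the pairs in $\Phi_{[\mQ]}(\ov{k},\ov{l})[t]$ and the $[\mQ]$-distance of such pairs are preserved by that reflection. Since, by Definition \ref{def: ref equi}, any two twisted adapted classes in $\lf \mQ \rf$ are connected by a finite chain of reflection maps, it suffices to show that if $[\mQ'] = [\mQ]\,r_i$ with $i$ a sink of $[\mQ]$ (the case of a left reflection $r_i\,[\mQ]$ being symmetric), then
\[
o^{[\mQ]}_t(\ov{k},\ov{l}) \;=\; o^{[\mQ']}_t(\ov{k},\ov{l}) \qquad \text{for all } \ov{k},\ov{l}\in\ov{I},\ t\in\Z_{\ge 1}.
\]

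First, I would describe how the folded AR-quiver transforms. If $i$ is a sink of $[\mQ]$ then $\al_i$ is the smallest positive root in $\prec_{[\mQ]}$, sitting at one boundary of $\WUp_{[\mQ]}$ with folded coordinate $(\ov{i},p_0)$, say. Passing to $[\mQ']=[\mQ]\,r_i$ removes $\al_i$ from that boundary and places a new root $\ga = w_0^{-1}(-\al_i) = \al_{i^*}$ at the opposite end, while every other positive root $\delta$ keeps its identity and has its folded coordinate $(\ov{j},q)$ shifted in a uniform, explicitly computable way (determined by the relation between $\phi_{[\mQ]}$ and $\phi_{[\mQ']}$ on the underlying Coxeter composition, and by the labeling convention of the coordinate system on $\Upsilon$ from Examples \ref{example: A_4} and \ref{Ex: UpQ}). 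Crucially, coordinate \emph{differences} between roots not equal to $\al_i$ are preserved, so the assignment $(\al,\beta)\mapsto(\al,\beta)$ gives an inclusion of the generic part of $\Phi_{[\mQ]}(\ov{k},\ov{l})[t]$ into $\Phi_{[\mQ']}(\ov{k},\ov{l})[t]$.

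Next, I would establish a bijection
\[
\Psi\colon \Phi_{[\mQ]}(\ov{k},\ov{l})[t] \isoto \Phi_{[\mQ']}(\ov{k},\ov{l})[t]
\]
under which $\dist_{[\mQ]}(\up)=\dist_{[\mQ']}(\Psi(\up))$. On the generic part (neither coordinate of the pair is $\al_i$) the convex partial orders $\prec_{[\mQ]}$ and $\prec_{[\mQ']}$ agree on the involved roots, so every chain witnessing $\dist_{[\mQ]}(\al,\beta)$ consists of sequences supported away from $\al_i$, and those same chains witness $\dist_{[\mQ']}(\al,\beta)$; this uses Proposition \ref{pro: BKM minimal} to ensure that intermediate entries are pairs of positive roots. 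By Proposition \ref{prop: DQ DQ'}'s well-definedness predecessor, both $o^{[\mQ]}_t$ and $o^{[\mQ']}_t$ are computed on any single representative pair, so it is in fact enough to match one pair in each class and one chain realizing its distance.

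The main obstacle will be the exceptional pairs, those in which either $\al=\al_i$ or $\beta=\al_i$ (respectively, the reflected root $\al_{i^*}$ on the $[\mQ']$-side). Here the image pair under $\Psi$ involves $\al_{i^*}$ living at the far boundary of $\WUp_{[\mQ']}$, and one must verify that the folded-coordinate difference condition $|a-b|=t$ is respected and that the distance value is unchanged. To handle this, I would invoke the explicit coordinate formulas in \eqref{eq: Dorey folded coordinate Bn} and \eqref{eq: Dorey folded coordinate Cn} to identify, for each minimal or distance-$2$ pair touching the boundary, a corresponding configuration on the opposite side of the folded quiver; the bound $\dist_{[\mQ]}(\up)\le 2$ (available in the preceding theorem) limits the case analysis to checking $[\mQ]$-minimal pairs and the unique intermediate $\um$ when $\dist_{[\mQ]}(\up)=2$. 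Once this boundary matching is done, combining with the generic case yields $o^{[\mQ]}_t(\ov{k},\ov{l})=o^{[\mQ']}_t(\ov{k},\ov{l})$ and hence $\widehat{D}^{[\mQ]}_{\ov{k},\ov{l}}(z)=\widehat{D}^{[\mQ']}_{\ov{k},\ov{l}}(z)$.
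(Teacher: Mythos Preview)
The paper does not give its own proof of this proposition: it is stated with the citation \cite{OS16B,OS16C} in the header and is immediately used to define the folded distance polynomial, with no argument supplied. So there is nothing in the paper to compare your plan against; the result is imported wholesale from the cited references.

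That said, your plan contains a concrete error you should fix before trying to carry it out. Under the reflection $[\mQ]\mapsto[\mQ]\,r_i$, positive roots other than $\al_i$ do \emph{not} keep their identity: the vertex with old label $\delta$ acquires the new label $s_i(\delta)$ (this is exactly step (A3) in the Appendix's description of the folded AR-quiver under reflection). What stays fixed is the folded \emph{coordinate} of each surviving vertex, not the root attached to it. Consequently your bijection $\Psi$ on the generic part should be $(\al,\beta)\mapsto(s_i\al,s_i\beta)$, not the identity. This does not destroy the strategy, since $s_i$ is a linear automorphism of the root lattice and therefore preserves both the relation $\al+\beta=\gamma$ and the chains of sequences witnessing $\dist_{[\mQ]}$; but as written your claim that ``$\prec_{[\mQ]}$ and $\prec_{[\mQ']}$ agree on the involved roots'' and that ``those same chains witness $\dist_{[\mQ']}(\al,\beta)$'' is false without inserting $s_i$ everywhere. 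The boundary case (pairs touching $\al_i$ on one side and $\al_{i^*}$ on the other) is only sketched, and in practice this is where the work lies in \cite{OS16B,OS16C}; your appeal to \eqref{eq: Dorey folded coordinate Bn}--\eqref{eq: Dorey folded coordinate Cn} is the right instinct, but those formulas classify $[\mQ]$-minimal pairs of a \emph{single} root $\gamma$, not arbitrary pairs with $\dist_{[\mQ]}\ge1$, so the case analysis is larger than you suggest.
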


From the above proposition, we can define $\widehat{D}_{\ov{k},\ov{l}}(z)$ for $\lf \mQ \rf$ in a natural way
and call it {\em the folded distance polynomial} at $\ov{k}$ and $\ov{l}$.

\section{Quantum affine algebras, denominator formulas and Dorey's rule}

\subsection{Quantum affine algebras}
Let $\cmA$ be a generalized Cartan matrix of affine type; i.e., $\cmA$ is positive semi-definite of corank $1$.
We choose $0 \in I\seteq \{ 0,1,\ldots,n\}$ as the leftmost vertices in the tables in \cite[pages 54, 55]{Kac} except $A^{(2)}_{2n}$-case in which we take
the longest simple root as $\al_0$. We set $I_0 \seteq I \setminus \{ 0 \}$. We denote by $\delta \seteq  \sum_{i \in I} d_i \al_i$
the imaginary root and by $c=\sum_{i \in I} c_ih_i$ the center.
We have $d_0=1$.

For an affine Cartan datum $(\cmA,\wl,\Pi,\wl^{\vee},\Pi^{\vee})$, we denote by $\g$ the affine Kac-Moody algebra,
$\g_0$ the subalgebra generated by $\{ e_i,f_i,h_i \ | \ i \in I_0\}$,
by $U_q(\g)$
and $U_q(\g_0)$  the corresponding quantum groups.
We denote by
$U_q'(\g)$ the subalgebra of $U_q(\g)$ generated by $\{ e_i, f_i, q^{\pm h_i} \ | \ i \in I\}$. 
We mainly deal with
$U_q'(\g)$ which is called the {\em quantum affine algebra}.

We denote by $\mathcal{C}_\g$ the category of finite-dimensional integrable $U_q'(\g)$-modules. For the rest of this paper,
we take
the algebraic closure of $\C(q)$ in $\displaystyle\cup_{m >0} \C((q^{1/m}))$
as the base field  $\ko$ of $U_q'(\g)$-modules.
A simple module $M$ in $\mathcal{C}_\g$ contains a non-zero vector $u$ of weight $\lambda\in \wl_\cl \seteq \wl /\Z\delta$ such that
\begin{itemize}
\item $\langle c,\lambda \rangle =0$ and $\langle h_i,\lambda \rangle \ge 0$ for all $i \in I_0$,
\item all the weights of $M$ are contained in $\lambda - \sum_{i \in I_0} \Z_{\ge 0} \cl(\alpha_i)$,
\end{itemize}
where $\cl \colon \wl \to \wl_\cl$. Such a $\lambda$ is unique and $u$ is unique up to a constant multiple.
We call $\lambda$ the {\em dominant extremal weight} of $M$ and $u$ the {\em dominant extremal weight vector} of $M$.

For $M \in \mathcal{C}_\g$ and $x\in\cor^\times$, let $M_x$ be the $U_q'(\g)$-module with the actions of $e_i$, $f_i$ replaced with $x^{\delta_{i0}}e_i$, $x^{-\delta_{i0}}f_i$,
respectively.

For each $i \in I_0$, we set
$$\varpi_i \seteq {\rm gcd}(c_0,c_i)^{-1}\cl(c_0\Lambda_i-c_i \Lambda_0) \in \wl_\cl.$$

Then there exists a unique simple $U_q'(\g)$-module $V(\varpi_i)$ in $\mathcal{C}_\g$ with its dominant extremal weight $\varpi_i$ and
its dominant extremal weight vector $u_{\varpi_i}$, called the {\em fundamental representation of weight $\varpi_i$}, satisfying
certain conditions (see \cite[\S 1.3]{AK} for more detail). Moreover, there exist the left dual $V(\varpi_i)^*$ and the right
dual ${}^*V(\varpi_i)$ of $V(\varpi_i)$ with the following
$U_q'(\g)$-homomorphisms
\begin{equation} \label{eq: dual affine}
 V(\varpi_i)^* \otimes V(\varpi_i)  \overset{{\rm tr}}{\longrightarrow} \ko \quad \text{ and } \quad
V(\varpi_i) \otimes {}^*V(\varpi_i)  \overset{{\rm
tr}}{\longrightarrow} \ko.
\end{equation}
We have
\begin{equation} \label{eq: p star}
V(\varpi_i)^*\simeq  V(\varpi_{i^*})_{(p^*)^{-1}}, \
{}^*V(\varpi_i)\simeq  V(\varpi_{i^*})_{p^*} \ \  \text{ with } \ \
p^* \seteq (-1)^{\langle \rho^\vee ,\delta
\rangle} q^{\lan c,\rho\ran}.
\end{equation}
Here $\rho$ is defined by  $\langle h_i,\rho \rangle=1$,
$\rho^\vee$ is defined by $\langle \rho^\vee,\alpha_i  \rangle=1$ and
$i^*$ is the involution of $I_0$ defined in \eqref{eq:invI}.

For $ k \in \Z$ and $V(\varpi_i)_x$, we denote by
\eqn
V(\varpi_i)^{k*}_x &\seteq &
\bc
V(\varpi_i)_x &\text{if $k=0$,}\\[1ex]
( \cdots ((V(\varpi_i)_x \underbrace{ )^*)^*
\cdots)^{*}}_{\text{$k$-times}}
&\text{if $k>0$,}\\[3ex]
\underbrace{{}^*( \cdots {}^*({}^*(}_{\text{$-k$-times}}V(\varpi_i)_x)) \cdots)
&\text{if $k<0$.}\ec
\eneqn

We say that a $U_q'(\g)$-module $M$ is {\em good} if it has a {\it
bar involution}, a crystal basis with {\em simple crystal graph},
and a {\em global basis} (see \cite{Kas02} for the precise definition).
For instance, $V(\varpi_i)$
is a good module for every $i \in I$. 

\subsection{Denominator formulas and folded distance polynomials}
For a good module $M$ and $N$, there exists a $U_q'(\g)$-homomorphism
$$ \Rnorm_{M,N}: M_{z_M} \otimes M_{z_N} \to \ko(z_M,z_N)  \otimes_{\ko[z_M^{\pm 1},z_N^{\pm 1}]} N_{z_N} \otimes M_{z_M} $$
such that
$$ \Rnorm_{M,N} \circ z_M = z_M \circ \Rnorm_{M,N}, \ \Rnorm_{M,N} \circ z_N = z_N \circ \Rnorm_{M,N} \text{ and }
\Rnorm_{M,N} (u_M \otimes u_N) = u_N \otimes u_M,$$
where $u_M$ (resp.\ $u_N$) is the dominant extremal weight vector of $M$ (resp. $N$).

The {\em denominator} $d_{M,N}$ of $\Rnorm_{M,N}$ is the unique non-zero monic polynomial $d(u) \in \ko[u]$ of the smallest degree such that
\begin{equation}\label{definition: dm,n}
d_{M,N}(z_N/z_M)\Rnorm_{M,N}(M_{z_M} \otimes N_{z_N}) \subset N_{z_N} \otimes M_{z_M}.
\end{equation}

\begin{theorem} [\cite{AK,Chari,Kas02}]  \label{Thm: basic properties} 
\begin{enumerate}
\item[{\rm (1)}] For good modules $M_1$ and $M_2$, the zeroes of $d_{M_1,M_2}(z)$ belong to
$\C[[q^{1/m}]]\;q^{1/m}$ for some $m\in\Z_{>0}$.
\item[{\rm (2)}] $ V(\varpi_i)_{a_i} \otimes  V(\varpi_j)_{a_j}$ is simple if and only if
$$ d_{i,j}(z)\seteq d_{V(\varpi_i),V(\varpi_j)}(z) $$ does not vanish at $z=a_i/a_j$ nor $a_j/a_i$.
\item[{\rm (3)}]
Let $M$ be a finite-dimensional simple integrable $U'_q(\g)$-module
$M$. Then, there exists a finite sequence $$\left( (i_1,a_1),\ldots,
(i_l,a_l)\right) \text{ in } (I_0\times \ko^\times)^l$$ such that
$d_{i_k,i_{k'}}(a_{k'}/a_k) \not=0$ for $1\le k<k'\le l$ and $M$ is
isomorphic to the head of
$\bigotimes_{i=1}^{l}V(\varpi_{i_k})_{a_k}$. Moreover, such a
sequence $\left((i_1,a_1),\ldots, (i_l,a_l)\right)$ is unique up to
permutation.
\item[{\rm (4)}] $d_{k,l}(z)=d_{l,k}(z)=d_{k^*,l^*}(z)=d_{l^*,k^*}(z)$ for $k,l \in I_0$.
\end{enumerate}
\end{theorem}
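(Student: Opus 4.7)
The plan is to verify each of the four assertions separately by appealing to the structural theory of good modules and normalized $R$-matrices developed in \cite{AK,Chari,Kas02}; since the statement collects standard results, the proposal mainly describes which tool handles which item.

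For (1), I would start from the universal $R$-matrix of $U_q(\g)$, whose coefficients are series in $q^{1/m}$. The normalized $R$-matrix $\Rnorm_{M_1,M_2}$ is obtained by rescaling so that it fixes the tensor product of dominant extremal vectors, and the denominator $d_{M_1,M_2}(z)$ records precisely the obstruction to landing in the integral lattice $N_{z_N}\otimes M_{z_M}$. Combining the integral form of the universal $R$-matrix with the existence of global and crystal bases for good modules, one forces any pole of $\Rnorm_{M_1,M_2}$ to lie in $\C[[q^{1/m}]]\,q^{1/m}$ for a suitable $m\in\Z_{>0}$.

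Assertions (2) and (3) then follow by standard $R$-matrix manipulations. For (2), non-vanishing of $d_{i,j}(z)$ at both $z=a_j/a_i$ and $z=a_i/a_j$ is exactly what makes both $\Rnorm_{V(\varpi_i),V(\varpi_j)}$ and its reverse honest $U_q'(\g)$-homomorphisms at the specialization; these then compose to the identity, forcing $V(\varpi_i)_{a_i}\otimes V(\varpi_j)_{a_j}$ to be simple, while vanishing exhibits a proper submodule. For (3), I would follow Chari's factorization strategy: the highest $\ell$-weight of a simple $M\in\mathcal{C}_\g$ factors uniquely into fundamental $\ell$-weights, producing a sequence $((i_1,a_1),\ldots,(i_l,a_l))$ which can be ordered so that $d_{i_k,i_{k'}}(a_{k'}/a_k)\neq 0$ for $k<k'$; the head of the resulting tensor product is simple and realizes $M$, and uniqueness up to permutation comes from uniqueness of the $\ell$-weight factorization.

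For (4), the equalities $d_{k,l}(z)=d_{l,k}(z)=d_{k^*,l^*}(z)=d_{l^*,k^*}(z)$ are deduced by applying the left/right duality functors to the defining relation \eqref{definition: dm,n} and using the identifications in \eqref{eq: p star}, which exchange $V(\varpi_i)$ with $V(\varpi_{i^*})$ up to a shift by $(p^*)^{\pm1}$; the shifts cancel in the denominator. The main obstacle is (1), which requires careful control of the analytic location of the poles of $\Rnorm_{M_1,M_2}$ against the integral structure of good modules; once that is in place, (2)--(4) follow by comparatively routine $R$-matrix arguments.
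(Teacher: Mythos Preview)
The paper does not actually prove this theorem: it is stated as a collection of known results with citations to \cite{AK,Chari,Kas02}, and no proof is given in the paper itself. Your proposal therefore goes beyond what the paper does by sketching the underlying arguments from the cited literature; there is nothing in the paper's own proof to compare against, and your outline of which source handles which item is a reasonable summary of the standard theory.
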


The denominator formulas between fundamental representations
are calculated in \cite{AK,DO94,KKK13B,Oh14R} for all classical quantum affine algebras (see \cite[Appendix A]{Oh14R}).
In this paper, we will focus on the denominator formulas for $U'_q(B_n^{(1)})$ and $U'_q(C_n^{(1)})$:

\begin{proposition}[\cite{AK,Oh14R}]\hfill
\begin{enumerate}
\item[{\rm (1)}]
$ d^{B_{n}^{(1)}}_{k,l}(z) =
\begin{cases}
\displaystyle \prod_{s=1}^{\min (k,l)} \big(z-(-q)^{|k-l|+2s}\big)\big(z+(-q)^{2n-k-l-1+2s}\big) & 1 \le k,l \le n-1, \\
\displaystyle  \prod_{s=1}^{k}\big(z-(-1)^{n+k}q_s^{2n-2k-1+4s}\big)  & 1 \le k \le n-1, \ l=n, \\
\displaystyle \prod_{s=1}^{n} \big(z-(q_s)^{4s-2}\big) &  k=l=n.
 \end{cases}$
\vs{1ex}
 \item[{\rm (2)}]  $ d^{C_{n}^{(1)}}_{k,l}(z) = \displaystyle \hspace{-2ex} \prod_{s=1}^{ \min(k,l,n-k,n-l)} \hspace{-3ex}
 \big(z-(-q_s)^{|k-l|+2s}\big)\prod_{i=1}^{ \min(k,l)} \big(z-(-q_s)^{2n+2-k-l+2s}\big)$.
\end{enumerate}
\end{proposition}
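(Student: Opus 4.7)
The plan is to follow the classical strategy of Akasaka-Kashiwara \cite{AK} as extended in \cite{Oh14R}: first compute the base denominator between two copies of the vector (natural) representation $V(\varpi_1)$, then propagate by fusion. The vector representation is $(2n+1)$-dimensional for $B_n^{(1)}$ and $2n$-dimensional for $C_n^{(1)}$, and in both cases $V(\varpi_1)_x \otimes V(\varpi_1)_y$ decomposes generically into a sum of three irreducibles (a symmetric part, an antisymmetric part, and a trivial component arising from the invariant bilinear form). The zeros of $d_{1,1}(z)$ are precisely the ratios $y/x$ at which two of these summands collide, and they can be determined by diagonalizing the universal $R$-matrix on each component; this yields the specialization of the claimed formulas at $k=l=1$.

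With $d_{1,1}$ established, I would realize each non-spin fundamental module $V(\varpi_l)$ (meaning $1 \le l \le n-1$ in type $B$, and $1 \le l \le n$ in type $C$) as the head of a fusion product $V(\varpi_1)_{x_1} \otimes \cdots \otimes V(\varpi_1)_{x_l}$ at an arithmetic progression of spectral parameters determined by the zeros of $d_{1,1}$. Writing the normalized $R$-matrix on $V(\varpi_1)_z \otimes V(\varpi_1)_{x_1} \otimes \cdots \otimes V(\varpi_1)_{x_l}$ as an iterated product of pairwise $R$-matrices and pushing it through the surjection onto the fusion quotient, I can read off $d_{1,l}(z)$ and, by iterating on the second factor, $d_{k,l}(z)$ for all non-spin pairs. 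The symmetry $d_{k,l}(z)=d_{l,k}(z)=d_{k^*,l^*}(z)$ from Theorem \ref{Thm: basic properties}(4), together with the involution $i \mapsto i^*$ of $I_0$, cuts the case analysis roughly in half and matches the obvious symmetries of the proposed formulas.

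The main obstacle is the spin case $l=n$ in type $B_n^{(1)}$: since $V(\varpi_n)$ is the spin representation, it cannot be obtained as a fusion product of copies of $V(\varpi_1)$, and this is why the corresponding denominators involve $q_s = q^{1/2}$ and the extra sign $(-1)^{n+k}$. Here I would instead use the Clifford-algebra realization of the spin representation (or, equivalently, view $V(\varpi_n)$ as embedded in a fundamental module of an auxiliary quantum affine algebra of type $A_{2n-1}^{(2)}$) and compute the $R$-matrix action directly on the resulting explicit bases to obtain $d^{B_n^{(1)}}_{n,n}(z)$; the mixed denominators $d^{B_n^{(1)}}_{k,n}(z)$ for $k<n$ then follow by applying the fusion argument above once on the first tensor factor, using the already-known $d^{B_n^{(1)}}_{1,n}(z)$ as input.

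Finally, each claimed formula must be pinned down by matching bounds. For the upper bound (no extra poles) I would argue that away from the listed values of $z$ the tensor product $V(\varpi_k)_x \otimes V(\varpi_l)_y$ is simple, which can be checked either by a weight/character counting inside $\mathcal{C}_\g$ or by exhibiting an explicit inverse to $R^{\mathrm{norm}}$; the condition of Theorem \ref{Thm: basic properties}(2) then forces the denominator to have no zero at those points. For the lower bound I would exhibit, at each candidate value listed in the formulas, a concrete proper subquotient of the specialization of $V(\varpi_k)_x \otimes V(\varpi_l)_y$, constructed via the fusion picture above; its existence obstructs invertibility of $R^{\mathrm{norm}}$ and so forces a pole there with the claimed multiplicity.
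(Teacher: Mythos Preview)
The paper does not prove this proposition at all: it is stated with a citation to \cite{AK,Oh14R} and used as a black box, so there is no in-paper argument to compare your proposal against. Your outline (compute $d_{1,1}$ on the vector representation, propagate by fusion to the non-spin fundamentals, treat the type $B$ spin node separately, and then pin down the exact polynomial via upper and lower bounds) is broadly the strategy carried out in those references, so it is consistent with what the paper is invoking rather than an alternative to anything written here.
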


The following theorem tells that we can read $d^{B_{n}^{(1)}}_{k,l}(z)$ from any $\WUp_{[\mQ]}$ of type $A_{2n-1}$
and $d^{C_{n}^{(1)}}_{k,l}(z)$ from any $\WUp_{[\mQ]}$ of type $D_{n+1}$:

\begin{theorem}[ \cite{OS16B,OS16C}] \label{thm: den dist}
For any $k,l \in \overline{I}$, we have
\begin{enumerate}
\item[{\rm (1)}] $d^{B_{n}^{(1)}}_{k,l}(z)= \widehat{D}_{k,l}(z) \times (z-q^{2n-1})^{\delta_{k,l}}$ where $\lf \mQ \rf$
is of type $A_{2n-1}$,
\item[{\rm (2)}] $d^{C_{n}^{(1)}}_{k,l}(z)= \widehat{D}_{k,l}(z) \times (z-q^{n+1})^{\delta_{k,l}}$ where $\lf \mQ \rf$
is of type $D_{n+1}$.
\end{enumerate}
\end{theorem}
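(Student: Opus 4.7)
The plan is to leverage Proposition \ref{prop: DQ DQ'}, which guarantees that $\widehat{D}_{\overline{k},\overline{l}}(z)$ does not depend on the representative $[\mQ]$ of $\lf \mQ \rf$, to reduce the statement to a single explicit calculation on a convenient folded AR-quiver. I would pick the canonical twisted adapted class corresponding to the twisted Coxeter element $s_1 s_2 \cdots s_n$ with the involution $\vee$ from \eqref{eq: B_n} (resp.\ \eqref{eq: C_n}). For this choice, the folded coordinates $\widehat{\Omega}_{[\mQ]}(\beta) = (\overline{i}, p)$ of all $\beta \in \PR$ in $\WUp_{[\mQ]}$ can be listed in closed form from the description in the previous section, which makes the combinatorics tractable.

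Next, I would compute the multiplicities $\mathtt{o}^{[\mQ]}_t(\overline{k}, \overline{l}) = \lceil o^{[\mQ]}_t(\overline{k},\overline{l})/\overline{\mathsf{d}} \rceil$ entering Definition \ref{def: Dist poly Q} by carrying out the following for each $\overline{k}, \overline{l} \in \overline{I}$ and each $t \in \Z_{\ge 1}$:
\begin{enumerate}
\item[(i)] enumerate all pairs $(\alpha,\beta) \in \Phi_{[\mQ]}(\overline{k},\overline{l})[t]$ from the explicit coordinate list;
\item[(ii)] decide whether $(\alpha,\beta)$ is $[\mQ]$-minimal, using the coordinate characterizations \eqref{eq: Dorey folded coordinate Bn} in the $A_{2n-1}$-case and \eqref{eq: Dorey folded coordinate Cn} in the $D_{n+1}$-case;
\item[(iii)] when the pair is not minimal, use the theorem giving $\dist_{[\mQ]}(\up) \le 2$ to identify the $[\mQ]$-socle and compute the distance, which is forced to be either $0$, $1$, or $2$.
\end{enumerate}
By Proposition \ref{pro: BKM minimal} the only pairs contributing positive multiplicity come from decompositions $\alpha + \beta = \gamma \in \PR$, so step (ii) partitions the pairs by the three subcases $\ell = i$, $\ell = j$, $\ell = k$ (and, in the $B_n^{(1)}$ case, the additional $s = i,j,k$ subcase when one of the coordinates equals $n$). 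I would sum the resulting multiplicities into $\widehat{D}_{\overline{k},\overline{l}}(z)$ and match them term by term against the explicit formulas for $d^{B_n^{(1)}}_{k,l}(z)$ and $d^{C_n^{(1)}}_{k,l}(z)$.

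The diagonal correction $(z - q^{2n-1})^{\delta_{k,l}}$ (resp.\ $(z - q^{n+1})^{\delta_{k,l}}$) must be tracked separately, since $\Phi_{[\mQ]}(\overline{k},\overline{l})[t]$ is defined only for $t \ge 1$ and records pairs of distinct roots with nonzero coordinate gap. For $k = l$ the denominator formula contains one factor that cannot be produced by any such pair; this factor corresponds to the self-pairing of $V(\varpi_k)$ at the parameter $q^{2n-1}$ or $q^{n+1}$ coming from the dual-pairing constant $p^*$ in \eqref{eq: p star} for $B_n^{(1)}$ and $C_n^{(1)}$, and it is precisely the missing $t = 0$ term in Definition \ref{def: Dist poly Q}.

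The main obstacle is the bookkeeping in step (iii) for the $B_n^{(1)}$ case, where the denominator formula contains two interleaving families of factors, namely those with base $(-q)^{|k-l|+2s}$ and those with base $(-q)^{2n-k-l-1+2s}$, together with a separate $q_s$-scaled family when $k = n$ or $l = n$. For each family one must verify that the predicted multiplicity matches the count of pairs with $|a-b| = t$ at the correct value of $t$, and that the sign $(-1)^{\overline{k}+\overline{l}}$ appearing in Definition \ref{def: Dist poly Q}(i) reproduces the $(-1)$-sign pattern in $d^{B_n^{(1)}}_{k,l}(z)$. The $C_n^{(1)}$ case is analogous but simpler since the base is uniformly $-q_s$, which aligns with Definition \ref{def: Dist poly Q}(ii).
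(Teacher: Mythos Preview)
The paper does not prove this theorem: it is quoted from \cite{OS16B,OS16C} and stated here without proof, so there is no argument in the present paper to compare your proposal against. Your overall strategy---fix one convenient $[\mQ]$ using Proposition~\ref{prop: DQ DQ'}, compute $\widehat{D}_{k,l}(z)$ from the explicit folded coordinates, and match against the known denominator formulas---is the natural one and is essentially what those references carry out.

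That said, your outline contains a genuine gap. You write that, by Proposition~\ref{pro: BKM minimal}, ``the only pairs contributing positive multiplicity come from decompositions $\alpha+\beta=\gamma\in\PR$.'' This is false. Proposition~\ref{pro: BKM minimal} only says that a $[\redez]$-minimal sequence \emph{of a root} $\gamma$ is a pair summing to $\gamma$; it does not say that a pair $(\alpha,\beta)$ can have $\dist_{[\mQ]}(\alpha,\beta)>0$ only when $\alpha+\beta\in\PR$. In fact the case analysis later in this paper (proof of Theorem~\ref{thm: simple simple}, cases~(2) and~(3)) explicitly treats pairs with $\alpha+\beta\notin\PR$ and positive $[\mQ]$-distance, and such pairs do contribute to $\widehat{D}_{k,l}(z)$. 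Concretely, for $B_n^{(1)}$ with $k,l\le n-1$ the second family of factors $(z+(-q)^{2n-k-l-1+2s})$ does not in general arise from pairs summing to a root; these come from pairs whose $[\mQ]$-socle is a longer sequence. Your step~(ii) therefore does not partition all relevant pairs, and step~(iii) must handle non-root sums by locating the $[\mQ]$-socle via the structural results of \cite{OS16B,OS16C} rather than via \eqref{eq: Dorey folded coordinate Bn}--\eqref{eq: Dorey folded coordinate Cn} alone.
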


\subsection{Dorey's rule and minimal pairs}
The morphisms in $${\rm Hom}_{U_q'(\g)}\big( V(\varpi_i)_a
\tens V(\varpi_j)_b, V(\varpi_k)_c \big) \quad \text{ for } i,j,k\in I_0 \text{ and } a,b,c \in \ko^\times$$
are studied by \cite{CP96,KKK13B,Oh14R} and called {\em Dorey's type} morphisms. In \cite{Oh14A,Oh14D},
the condition of non-vanishing of
the above Hom space
are interpreted the positions of $\al,\beta,\ga \in \PR$ in $\Gamma_Q$
where $(\al,\beta)$ is a pair for $\ga$ and $\g$ is of type $A^{(1)}_{n}$ or $D^{(1)}_{n}$.

\begin{theorem}[{\cite[Theorem 8.1, Theorem 8.2]{CP96}}]
For $\g^{(1)}=B^{(1)}_{n}$ or $C^{(1)}_{n}$,
let $(i,x)$, $(j,y)$, $(k,z) \in I_0 \times \ko^\times$. Then
$$ {\rm Hom}_{U'_q(\g^{(1)})}\big( V(\varpi_{j})_y \otimes V(\varpi_{i})_x , V(\varpi_{k})_z  \big) \ne 0 $$
if and only if one of the following conditions holds$\colon$
\begin{enumerate}
\item[{\rm (1)}] When $\g^{(1)}=B^{(1)}_{n}$, the conditions are given as follows$\colon$
\begin{eqnarray}&&
\left\{\hspace{1ex}\parbox{75ex}{
\begin{enumerate}
\item[{\rm (i)}] $\ell \seteq \max(i,j,k) \le n-1$, $i+j+k=2\ell$ and
$$ \left( y/z,x/z \right) =
\begin{cases}
\big( (-1)^{j+k}q^{-i},(-1)^{i+k}q^{j} \big), & \text{ if } \ell = k,\\
\big( (-1)^{j+k}q^{i-(2n-1)},(-1)^{i+k}q^{j} \big), & \text{ if } \ell = i,\\
\big( (-1)^{j+k}q^{-i},(-1)^{i+k}q^{2n-1-j}  \big), & \text{ if } \ell = j.
\end{cases}
$$
\item[{\rm (ii)}] $s \seteq \min(i,j,k) \le n-1$, the others are the same as $n$ and  
$$ (y/z,x/z) =
\begin{cases}
\big( (-1)^{n+k}q_s^{-2(n-1-k)+1},(-1)^{n+1+k}q_s^{2(n-1-k)-1} ), & \text{ if } s = k,\\
\big(  q_s^{-4i-4},(-1)^{i+n} q_s^{2(n-1-i)-1}  ), & \text{ if } s = i,\\
\big( (-1)^{j+n}q_s^{-2(n-1-j)+1}, q_s^{4j+4}), & \text{ if } s = j.
\end{cases}
$$
\end{enumerate}
}\right. \label{eq: Dorey B}
\end{eqnarray}
\item[{\rm (2)}] When $\g^{(1)}=C^{(1)}_{n}$, the conditions are given as follows$\colon$
\begin{eqnarray}&&
\left\{\hspace{1ex}\parbox{75ex}{
$\ell \seteq \max(i,j,k) \le n$, $i+j+k=2\ell$ and
$$ \left( y/z,x/z \right) =
\begin{cases}
\big( (-q_s)^{-i},(-q_s)^{j} \big), & \text{ if } \ell = k,\\
\big( (-q_s)^{i-(2n+2)},(-q_s)^{j} \big), & \text{ if } \ell = i,\\
\big( (-q_s)^{-i},(-q_s)^{2n+2-j}  \big), & \text{ if } \ell = j.
\end{cases}
$$
}\right. \label{eq: Dorey C}
\end{eqnarray}
\end{enumerate}
\end{theorem}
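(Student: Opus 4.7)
The plan is to deduce this theorem from the combinatorial machinery developed in the preceding sections, rather than reproducing the original Coxeter-element argument of Chari-Pressley. I would fix a twisted adapted class $[\mQ]$ of type $A_{2n-1}$ (resp.\ $D_{n+1}$) and translate the non-vanishing of ${\rm Hom}$ through the Schur-Weyl duality functor $\F_\mQ \colon \Rep(R_\fg) \to \mC_\mQ$: each fundamental module $V(\varpi_i)_x$ is identified, via the Schur-Weyl datum underlying $\F_\mQ$, with a simple root module $L(\alpha)$ for some $\alpha \in \PR$ whose folded coordinate $\widehat{\Omega}_{[\mQ]}(\alpha) = (\overline{i},p)$ records the spectral parameter $x$. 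The precise dictionary $x \leftrightarrow (\overline{i},p)$ is forced by the denominator formulas recorded in Theorem~\ref{thm: den dist}, since the locations of zeroes of $d^{B_n^{(1)}}_{k,l}(z)$ and $d^{C_n^{(1)}}_{k,l}(z)$ match those of $\widehat{D}_{k,l}(z)$.

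Under this translation, the Hom space
\[
{\rm Hom}_{U_q'(\g^{(1)})}\bigl(V(\varpi_j)_y \otimes V(\varpi_i)_x,\ V(\varpi_k)_z\bigr)
\]
corresponds to ${\rm Hom}_{R_\fg}\bigl(L(\beta) \conv L(\alpha),\, L(\gamma)\bigr)$ for the associated roots $\alpha,\beta,\gamma$. Since $\F_\mQ$ is exact and sends simples to simples, this second Hom space is non-zero precisely when $\alpha+\beta=\gamma$ and $(\beta,\alpha)$ is a $[\mQ]$-minimal pair of $\gamma$; this last equivalence follows from Proposition~\ref{pro: BKM minimal} together with the characterization of $[\mQ]$-socles in the twisted adapted setting.

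The final step is to invoke the folded-coordinate descriptions of $[\mQ]$-minimal pairs recorded in \eqref{eq: Dorey folded coordinate Bn} and \eqref{eq: Dorey folded coordinate Cn}. Substituting these coordinate conditions back into the spectral dictionary of step one yields precisely the equations for $y/z$ and $x/z$ stated in \eqref{eq: Dorey B} and \eqref{eq: Dorey C}. The sign factors $(-1)^{j+k}$ and $(-1)^{i+k}$ in the $B_n^{(1)}$ case arise naturally from the parity $(-1)^{\overline{k}+\overline{l}}$ appearing in Definition~\ref{def: Dist poly Q}(i), while the uniform sign $(-q_s)$ governing the $C_n^{(1)}$ case comes from Definition~\ref{def: Dist poly Q}(ii).

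The main obstacle will be pinning down the spectral-parameter dictionary precisely: the functor $\F_\mQ$ depends on the choice of Schur-Weyl datum, and one must verify that the resulting shift $x \leftrightarrow p$ is uniformly compatible with the folding and with the sign conventions across both types $B_n^{(1)}$ and $C_n^{(1)}$. A secondary subtlety is that the Hom condition on the quantum affine side is intrinsic, so the final list of conditions must be independent of the auxiliary choice of $[\mQ] \in \lf \mQ \rf$; this independence is ultimately a consequence of Proposition~\ref{prop: DQ DQ'} together with the transitivity of the reflection equivalence on $\lf \mQ \rf$.
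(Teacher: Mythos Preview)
Your proposal is circular. The Chari--Pressley theorem you are trying to prove is an \emph{input} to the paper's machinery, not an output; the paper does not prove it at all but simply cites it from \cite{CP96}. Concretely, the reformulation of Dorey's rule in terms of $[\mQ]$-minimal pairs (Theorem~\ref{thm: Dorey}) is established in \cite{OS16B,OS16C} by directly matching the explicit parameter conditions \eqref{eq: Dorey B}, \eqref{eq: Dorey C} of Chari--Pressley against the folded-coordinate formulas \eqref{eq: Dorey folded coordinate Bn}, \eqref{eq: Dorey folded coordinate Cn}. Then Theorem~\ref{thm: FQ2 SQ} (that $\F_\mQ(S_\mQ(\gamma))\simeq V_\mQ(\gamma)$) invokes Theorem~\ref{thm: Dorey} to see that $V_\mQ(\beta)\otimes V_\mQ(\alpha)$ is not simple, and Theorem~\ref{thm: simples to simples} (that $\F_\mQ$ sends simples to simples) again rests on Theorem~\ref{thm: Dorey}. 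So when you write ``since $\F_\mQ$ is exact and sends simples to simples'' you are already assuming the statement you set out to prove.

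There is a second, more local gap. You assert that ``each fundamental module $V(\varpi_i)_x$ is identified, via the Schur--Weyl datum underlying $\F_\mQ$, with a simple root module $L(\alpha)$''. This is not how the functor works: $\F_\mQ$ sends the one-dimensional module $L(\alpha_i)$ over $R(\alpha_i)$ to $V_\mQ(\alpha_i)$, a fundamental module at one specific spectral parameter determined by $\widehat{\Omega}_{[\mQ]}(\alpha_i)$. An arbitrary $V(\varpi_i)_x$ with $x\in\ko^\times$ need not lie in $\mC_\mQ$ at all, and hence is not in the essential image of $\F_\mQ$. To run your argument you would first have to know that any triple $(i,x),(j,y),(k,z)$ with non-vanishing Hom can be spectrally shifted into some $\mC_\mQ$; but that is precisely the content of Theorem~\ref{thm: Dorey}, which, as above, presupposes the Chari--Pressley theorem.
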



\begin{definition} [\cite{OS16B,OS16C}] \label{def: VmQ(beta)}
For any $[\mQ] \in \lf \mQ \rf$
and any positive root $\beta\in \PR$
of type $A_{2n-1}$ or $D_{n+1}$, we set the $U_q'(\g^{(1)})$-module ($\g^{(1)}=B_n^{(1)}$ or $C_n^{(1)}$) $V_{\mQ}(\beta)$ defined as follows$\colon$
For $\widehat{\Omega}_{\mQ}(\beta)=(i,p)$, we define
\begin{align} \label{eq: V(beta)}
V_{\mQ}(\beta) \seteq  \begin{cases}
V(\varpi_{i})_{(-1)^i(q_s)^{p}} & \text{ if } \g^{(1)}=B_n^{(1)}, \\
V(\varpi_{i})_{(-q_s)^{p}} & \text{ if } \g^{(1)}=C_n^{(1)}.
\end{cases}
\end{align}
\end{definition}

\begin{theorem} [\cite{OS16B,OS16C}] \label{thm: Dorey}
Let $(i,x)$, $(j,y)$, $(k,z) \in I_0 \times \ko^\times$. Then
$$ {\rm Hom}_{U_q'(\g^{(1)})}\big( V(\varpi_{j})_y \otimes V(\varpi_{i})_x , V(\varpi_{k})_z  \big) \ne 0  \quad \text{ for } \g^{(1)}=B^{(1)}_{n} \ \text{ $($resp.\ $C^{(1)}_{n})$} $$
if and only if there exists a twisted adapted class $[\mQ]$ of type $A_{2n-1}$ $($resp.\ $D_{n+1})$ and $\al,\beta,\ga \in \Phi_{A_{2n-1}}^+$
$($resp.\ $\Phi_{D_{n+1}}^+)$ such that
\begin{enumerate}
\item[{\rm (1)}] $(\al,\beta)$ is a $[\mQ]$-minimal pair of $\ga$,
\item[{\rm (2)}] $V(\varpi_{j})_y  = V_{\mQ}(\beta)_a, \ V(\varpi_{i})_x  = V_{\mQ}(\al)_a, \ V(\varpi_{k})_z  = V_{\mQ}(\ga)_a$
for some $a \in \ko^\times$.
\ee
\end{theorem}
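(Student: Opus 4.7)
The plan is to prove the equivalence by directly matching the Chari--Pressley characterization of Dorey-type morphisms, stated as the criterion immediately preceding this theorem, with the combinatorial characterization of $[\mQ]$-minimal pairs provided by \eqref{eq: Dorey folded coordinate Bn} and \eqref{eq: Dorey folded coordinate Cn}. The dictionary between the two sides is Definition \ref{def: VmQ(beta)}: it converts a folded coordinate $(i,p)$ into the spectral parameter of a fundamental module, namely $(-1)^i q_s^p$ in type $B$ and $(-q_s)^p$ in type $C$.

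First I would address the \emph{if} direction. Suppose $(\al,\beta)$ is a $[\mQ]$-minimal pair of $\ga$ with folded coordinates $\widehat{\Omega}_{[\mQ]}(\al)=(i,p)$, $\widehat{\Omega}_{[\mQ]}(\beta)=(j,q)$, $\widehat{\Omega}_{[\mQ]}(\ga)=(k,r)$, satisfying one of the cases of \eqref{eq: Dorey folded coordinate Bn} (resp.\ \eqref{eq: Dorey folded coordinate Cn}). Applying \eqref{eq: V(beta)}, the spectral ratios $y/z$ and $x/z$ between $V_\mQ(\beta)_a$, $V_\mQ(\al)_a$ and $V_\mQ(\ga)_a$ collapse into explicit sign-and-power expressions in $q_s$. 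A direct substitution, broken into the three subcases $\ell=k$, $\ell=i$, $\ell=j$ (and, in type $B$, into the analogous subcases of (ii) where two of $i,j,k$ equal $n$), shows that these ratios match those appearing in \eqref{eq: Dorey B} (resp.\ \eqref{eq: Dorey C}). The Chari--Pressley theorem then yields the non-vanishing of the Hom space.

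For the \emph{only if} direction, I would invoke Chari--Pressley in the opposite direction: given that the Hom space is nonzero, the data $(i,x)$, $(j,y)$, $(k,z)$ satisfy one of the explicit cases. I then exhibit a twisted adapted class $[\mQ]$ together with positive roots $\al,\beta,\ga \in \PR$ whose folded coordinates realize the prescribed residues and differences. This uses the combinatorial flexibility of the $r$-cluster point $\lf \mQ \rf$: reflection functors act transitively on $\lf \mQ \rf$ and shift folded coordinates in a controlled manner, so every residue-and-offset pattern permitted by \eqref{eq: Dorey folded coordinate Bn} (resp.\ \eqref{eq: Dorey folded coordinate Cn}) is realised inside some $\WUp_{[\mQ]}$. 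Once such a realisation is located, the folded-coordinate conditions together with the minimal pair propositions identify $(\al,\beta)$ as a $[\mQ]$-minimal pair of $\ga$, and the previous paragraph's computation fixes the spectral parameters up to the common factor $a \in \ko^\times$.

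The main obstacle is the existence claim in the \emph{only if} direction: one must verify that \emph{every} Chari--Pressley triple arises as the image under $V_\mQ$ of some minimal pair in some $\WUp_{[\mQ]}$. The delicate cases are the $B$-type condition (ii) where two of $i,j,k$ equal $n$, because the folding of the $A_{2n-1}$-residues $n-1$ and $n$ into the single folded residue $\bar n$ must be tracked (and similarly for $D_{n+1}$-residues $n,n+1$), and one has to reconcile the sign conventions $(-1)^i q_s^p$ versus $(-q_s)^p$ with the Chari--Pressley signs $(-1)^{n+k}$ and $(-1)^{n+1+k}$ appearing in \eqref{eq: Dorey B}(ii). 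This is essentially bookkeeping once the folding is set up, but it must be executed case by case.
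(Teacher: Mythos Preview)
The paper does not supply its own proof of this theorem; it is imported wholesale from \cite{OS16B,OS16C}. Your outline---matching the Chari--Pressley criterion \eqref{eq: Dorey B}/\eqref{eq: Dorey C} against the folded-coordinate characterisation of minimal pairs \eqref{eq: Dorey folded coordinate Bn}/\eqref{eq: Dorey folded coordinate Cn} through the dictionary of Definition~\ref{def: VmQ(beta)}---is precisely the intended route, and the ``if'' direction is indeed a direct case-by-case substitution.

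For the ``only if'' direction your identification of the obstacle is right, but the argument you sketch is incomplete. Transitivity of reflection functors on $\lf\mQ\rf$ tells you only that the various $\WUp_{[\mQ]}$ are related; it does not by itself guarantee that every index--offset pattern permitted by \eqref{eq: Dorey B}/\eqref{eq: Dorey C} is realised by an actual triple $\al,\beta,\ga\in\PR$ with $\al+\beta=\ga$ inside some $\WUp_{[\mQ]}$. The propositions recording \eqref{eq: Dorey folded coordinate Bn} and \eqref{eq: Dorey folded coordinate Cn} are biconditionals \emph{conditional on} $\al+\beta=\ga$ already holding; they do not assert existence of such roots at prescribed coordinates. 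What is needed is the explicit description of vertex labels in $\WUp_{[\mQ]}$ (i.e.\ which positive root sits at which folded coordinate) developed at length in \cite{OS16B,OS16C}; from that one checks directly, for each Chari--Pressley case, that a suitable $[\mQ]$ and triple exist. So your plan is correct, but the ``combinatorial flexibility'' step is not just bookkeeping---it rests on the structural classification of the folded AR-quivers carried out in those references, and a self-contained proof would have to reproduce or cite that classification.
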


\section{Categorifications and Schur-Weyl dualities}

\subsection{Categorifications via modules over \KLR  algebras}
For a given symmetrizable Cartan datum
$(\cmA,\wl,\Pi,\wl^\vee,\Pi^\vee)$, we choose a polynomial $\Q_{ij}(u,v) \in \ko[u,v]$ for $i,j \in I$
which is of the form
\begin{equation} \label{eq:Q}
\Q_{ij}(u,v) =
\delta(i \ne j)\sum\limits_{ \substack{ (p,q)\in \Z^2_{\ge 0} \\
p(\al_i | \al_i) + q(\al_j | \al_j)=-2(\al_i|\al_j)}}
t_{i,j;p,q} u^p v^q
\end{equation}
with $t_{i,j;p,q}\in\ko$, $t_{i,j;p,q}=t_{j,i;q,p}$ and
$t_{i,j:-a_{ij},0} \in \ko^{\times}$. Thus we have
$\Q_{i,j}(u,v)=\Q_{j,i}(v,u)$.

For $n \in \Z_{\ge 0}$ and $\mathsf{b} \in \rl^+$ such that $\het(\mathsf{b}) = n$, we set
$$I^{\mathsf{b}} = \{\nu = (\nu_1, \ldots, \nu_n) \in I^{n} \ | \ \alpha_{\nu_1} + \cdots + \alpha_{\nu_n} = \beta \}.$$

For $\mathsf{b} \in \rl^+$, we denote by $R(\mathsf{b})$ the \KLR algebra at $\mathsf{b}$ associated
with $(\cmA,\wl,\Pi,\wl^\vee,\Pi^\vee)$ and
$(\Q_{i,j})_{i,j \in I}$. It is a $\Z$-graded $\ko$-algebra generated by
the generators $\{ e(\nu) \}_{\nu \in  I^{\mathsf{b}}}$, $ \{x_k \}_{1 \le
k \le \het(\mathsf{b})}$, $\{ \tau_m \}_{1 \le m < \het(\mathsf{b})}$ with the certain defining relations (see \cite[Definition 2.7]{Oh14A} for the relations).

Let $\Rep(R(\mathsf{b}))$ be the category consisting of finite-dimensional graded $R(\mathsf{b})$-modules and $[\Rep(R(\mathsf{b}))]$ be the Grothendieck group of $\Rep(R(\mathsf{b}))$.
Then $[\Rep(R(\mathsf{b}))]$ has a natural $\Z[q^{\pm 1}]$-module structure induced by the grading shift. In this paper,
we often ignore grading shifts.

For $M \in \Rep(R(\mathsf{a}))$ and $N \in \Rep(R(\mathsf{b}))$, we denote by $M \conv N$ the
{\em convolution product} of $M$ and $N$. Then
 $\Rep(R)\seteq \soplus_{\mathsf{b} \in \rl^+} \Rep(R(\mathsf{b}))$
has a monoidal category structure by the convolution product and its
Grothendieck group $[\Rep(R)]$ has a natural $\Z[q^{\pm1}]$-algebra
structure induced by the convolution product $\conv$ and the grading
shift functor $q$.

For $M \in \Rep(\mathsf{b})$ and $M_k \in \Rep(\mathsf{b}_k)$ $(1 \le k \le n)$, we denote by
$$ M^{\conv 0} \seteq \ko, \quad M^{\circ r} = \overbrace{ M \conv \cdots \conv M }^r, \quad \dct{k=1}{n} M_k = M_1 \conv \cdots \conv M_n.$$

We say that a simple $R(\mathsf{b})$-module $M$ is {\em real} if $M\conv M$ is simple.

\begin{theorem}[{\cite{KL09, R08}}] \label{Thm:categorification}
For a given symmetrizable Cartan datum, Let $U^-_{\A}(\mathsf{g})^\vee$ $(\A=\Z[q^{\pm 1}])$ the dual of the integral form
of the negative part of quantum groups $U_q(\mathsf{g})$ and let
$R$ be the \KLR algebra related to the datum. Then we have
\begin{align}
U^-_{\A}(\mathsf{g})^{\vee} \simeq [\Rep(R)].
\label{eq:KLRU}
\end{align}
\end{theorem}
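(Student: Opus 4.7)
The plan is to construct a $\Z[q^{\pm 1}]$-algebra homomorphism $\gamma \colon U^-_\A(\mathsf{g})^\vee \to [\Rep(R)]$ and show that it is an isomorphism. First I would equip $[\Rep(R)]$ with its $\Z[q^{\pm 1}]$-algebra structure coming from the convolution product $\conv$ induced by the induction functors $R(\mathsf{a}) \otimes R(\mathsf{b}) \to R(\mathsf{a}+\mathsf{b})$ attached to the natural embedding of idempotents, and dually with a coproduct from the restriction functors. A Mackey-type filtration on restrictions of induced modules then endows $[\Rep(R)]$ with the structure of a twisted $\Z[q^{\pm 1}]$-bialgebra, mirroring Lusztig's dual bialgebra structure on $U^-_\A(\mathsf{g})^\vee$.

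Second, I would define $\gamma$ on the dual Chevalley generators by sending $f_i^\vee$ to the class $[L(i)]$ of the one-dimensional simple $R(\alpha_i)$-module, extending multiplicatively via convolution and divided powers. The essential algebraic check is that these classes satisfy the (dual) quantum Serre relations. This reduces to a direct computation using the quadratic relations $\tau_k^2 e(\nu) = \Q_{\nu_k,\nu_{k+1}}(x_k,x_{k+1})\, e(\nu)$ and the braid relations among the $\tau_m$; these categorify the Serre identities via explicit short exact sequences relating convolutions of the $L(i)$'s, together with the combinatorics of the nilHecke subalgebra acting on $L(i)^{\conv n}$.

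For injectivity, I would exploit the pairing induced by graded characters: the character map sends a simple module to an element of the dual canonical basis, and the Khovanov-Lauda nondegenerate bilinear pairing on $[\Rep(R)]$ given by $\langle [M],[N]\rangle = \dim_q \mathrm{Hom}_{R}(M,N)$ pulls back to Lusztig's form on $U_q^-(\mathsf{g})$. For surjectivity, the classification of simple modules in $\Rep(R)$ via heads of standard (convolution) modules $L(i_1) \conv \cdots \conv L(i_n)$ shows that the image of $\gamma$ contains a $\Z[q^{\pm 1}]$-spanning set.

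The main obstacle is the bialgebra compatibility: verifying that the Mackey filtration on the restriction of an induced module matches the coproduct formula in $U^-_\A(\mathsf{g})^\vee$ on the nose, including the correct grading shifts. This requires a careful analysis of double cosets of the symmetric group $\mathfrak{S}_{\het(\mathsf{b})}$ acting on the idempotents $e(\nu)$, together with the graded dimension computation that turns the multiplicities into the appropriate powers of $q$ prescribed by Lusztig's twisting cocycle. Once this is in place, the theorem reduces to the standard presentation of $U_q^-(\mathsf{g})$ by Chevalley generators modulo quantum Serre relations, and the isomorphism $\gamma$ follows.
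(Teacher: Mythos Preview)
The paper does not prove this theorem; it is stated with the citation \cite{KL09,R08} and no proof is given, since it is a foundational result quoted from the literature rather than a contribution of the present paper. There is therefore nothing in the paper's own argument to compare your proposal against.

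That said, your outline is broadly in the spirit of the original Khovanov--Lauda argument: building a (twisted) bialgebra structure on $[\Rep(R)]$ from induction and restriction, defining the map on generators via $[L(i)]$, and using the bilinear form and Mackey filtration to pin down the isomorphism. A few points of imprecision are worth noting. First, in the original approach the map is more naturally constructed in the other direction, as a character map $[\Rep(R)] \to U^-_\A(\mathsf{g})^\vee$ (or into the quantum shuffle algebra), with injectivity following from faithfulness of graded characters; your formulation of defining $\gamma$ on dual Chevalley generators and ``extending multiplicatively'' presupposes the Serre relations hold before you have verified them. Second, the claim that the character map sends simples to elements of the dual canonical basis is a much deeper statement (Theorem~\ref{thm:categorification 2} in this paper, due to Varagnolo--Vasserot and Rouquier, and only known in the symmetric case over characteristic zero); it is not needed for, and should not be invoked in, the proof of the present theorem. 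The isomorphism at the level of $\A$-algebras follows already from the bialgebra-with-form characterization, without any appeal to canonical bases.
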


\begin{definition} We say that the \KLR algebra $R$ is {\em symmetric} if $\cmA$ is symmetric and
$\Q_{ij}(u,v)$ is a polynomial in $u-v$ for all $i,j\in I$.
\end{definition}

\begin{theorem} \cite{R11,VV09} \label{thm:categorification 2}
Assume that the \KLR algebra $R$ is symmetric and
the base field $\ko$ is of characteristic zero. Then under the isomorphism \eqref{eq:KLRU}
in {\rm Theorem \ref{Thm:categorification}},
the upper global basis of $U^-_{\A}(\mathsf{g})^{\vee}$ corresponds to
the set of the isomorphism classes of {\em self-dual} simple $R$-modules.
\end{theorem}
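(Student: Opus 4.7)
The plan is to geometrize the symmetric KLR algebra via Lusztig's perverse sheaves on the representation variety of the associated symmetric quiver, following the strategy of Varagnolo--Vasserot. Fix $\mathsf{b}\in\rl^+$ and let $E_\mathsf{b}$ denote the space of representations of dimension vector $\mathsf{b}$ for the doubled symmetric quiver, equipped with its natural $G_\mathsf{b}=\prod_i GL(V_i)$-action. For each sequence $\nu\in I^\mathsf{b}$ one has the proper Grothendieck--Springer-type map $\pi_\nu\colon\widetilde{F}_\nu\to E_\mathsf{b}$ from the variety of $\nu$-flags of subrepresentations, and the key technical step is to realize $R(\mathsf{b})$ as the $G_\mathsf{b}$-equivariant Ext algebra
\[ R(\mathsf{b})\;\simeq\;\mathrm{Ext}^*_{G_\mathsf{b}}\!\bigl(L_\mathsf{b},\,L_\mathsf{b}\bigr),\qquad L_\mathsf{b}\seteq\soplus_{\nu\in I^\mathsf{b}} (\pi_\nu)_*\underline{\ko}_{\widetilde{F}_\nu}[\dim\widetilde{F}_\nu], \]
matching the generators $e(\nu),x_k,\tau_m$ with Euler-class multiplication, equivariant Chern classes, and the correspondences producing divided-difference type operators. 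Here the symmetry hypothesis on $R$ is what allows the structure polynomials $\Q_{ij}$ to be aligned with those arising from the intersection forms on these flag correspondences.

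Once this identification is in place, a standard Koszul-style argument identifies graded projective indecomposable $R(\mathsf{b})$-modules with the images under $\mathrm{Hom}^*_{G_\mathsf{b}}(L_\mathsf{b},-)$ of the simple perverse summands appearing in $L_\mathsf{b}$ via the decomposition theorem; dually, simple graded $R(\mathsf{b})$-modules correspond bijectively (up to shift) with simple $G_\mathsf{b}$-equivariant perverse sheaves on $E_\mathsf{b}$. Under this bijection the algebraic bar involution on $R(\mathsf{b})$-modules matches Verdier duality on the geometric side, so \emph{self-dual} simple $R$-modules correspond to Verdier-self-dual simple perverse sheaves in their canonical weight-zero normalization. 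Applying Lusztig's geometric construction of the canonical basis, the classes of these self-dual simple perverse sheaves in $[\Rep(R)]$ are identified with the lower canonical basis of $U^-_\A(\mathsf{g})$. Dualizing through the pairing $[\Rep(R)]\simeq U^-_\A(\mathsf{g})^\vee$ of Theorem \ref{Thm:categorification} swaps the lower canonical basis with the upper global basis and yields the claim.

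The main obstacle is the geometric identification of $R(\mathsf{b})$ with the equivariant Ext algebra, together with the compatible matching of the $\Z$-grading, the bar involution, the algebra structure, and the convolution product. Once this compatibility is pinned down, the rest of the argument is essentially formal: combine the decomposition theorem with Lusztig's theorem on the canonical basis and dualize. The characteristic zero hypothesis enters in two places: it ensures that one may freely invoke the decomposition theorem (keeping simple perverse sheaves as direct summands of $L_\mathsf{b}$), and it guarantees that passage between the $\overline{\mathbb{Q}}_\ell$ (or complex) geometric category and the algebraic category of graded $R$-modules is faithful on simples, without the pathologies that can arise in positive characteristic.
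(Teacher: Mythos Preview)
Your sketch is essentially the Varagnolo--Vasserot argument, which is one of the two references the paper cites for this theorem; the paper itself gives no proof and simply invokes \cite{R11,VV09}. So at the level of ``does this match the paper's proof,'' there is nothing to compare: the paper treats the statement as a black box from the literature, and your outline is a reasonable summary of that black box.

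Two small imprecisions are worth flagging. First, the representation variety $E_\mathsf{b}$ in the Varagnolo--Vasserot construction is that of the oriented quiver $\Gamma$ (with one arrow $i\to j$ for each factor of $(u-v)$ in $\Q_{ij}$), not of the doubled quiver; the doubling is relevant for Nakajima varieties and the full quantum group, not for $U_q^-$. Second, your final dualization step is slightly tangled: under the isomorphism $[\Rep(R)]\simeq U^-_\A(\mathsf{g})^\vee$ of Theorem~\ref{Thm:categorification}, the classes of self-dual simples land \emph{directly} on the dual canonical (upper global) basis, while it is the indecomposable \emph{projectives} that categorify the canonical basis of $U^-_\A(\mathsf{g})$ itself. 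No extra ``swap'' is needed once you are on the $\Rep$ side. These are matters of bookkeeping rather than genuine gaps; the architecture of your argument is correct.
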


\begin{theorem} [\cite{Kato12,Mc12,Oh15E}]
For a finite-dimensional simple Lie algebra $\fg$, the dual PBW-basis of $U^-_\A(\fg)^\vee$ associated to $[\redez]$ is categorified in the following sense{\rm:}
for each $\beta \in \PR$, there exists a simple $R(\beta)$-module $S_{[\redez]}(\beta)$ such that
\begin{enumerate}
\item[{\rm (1)}] $S_{[\redez]}(\beta)^{\conv r}$ is simple for any $r \in \Z_{\ge 0}$,
\item[{\rm (2)}] for each $\um \in \Z_{\ge 0}^\N$, set $\Stom \seteq  S_{[\redez]}(\beta_1)^{\conv m_1}\conv\cdots \conv S_{[\redez]}(\beta_\N)^{\conv m_\N}$.
Then the set $\{ \Stom  \ | \ \um \in \Z_{\ge 0}^\N\}$ corresponds to
the dual PBW-basis under the isomorphism in \eqref{eq:KLRU} \ro up to a grading shifts\rof.
\end{enumerate}
\end{theorem}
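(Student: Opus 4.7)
The plan is to construct $S_{[\redez]}(\beta)$ as a cuspidal module with respect to the convex order $<_\redez$ associated to the commutation class $[\redez]$, following the strategy of Kato, McNamara, and Oh. Concretely, pick any representative $\redez=s_{i_1}\cdots s_{i_{\N}}\in[\redez]$. For each $\beta\in\PR$, one forms the standard module $\Delta_{[\redez]}(\beta)$ as a suitable induced (parabolic) $R(\beta)$-module built from the sequence of simple roots read off from $\redez$, and defines $S_{[\redez]}(\beta)$ to be its simple head. The first check is that this definition depends only on $[\redez]$: reduced expressions in the same commutation class are related by commutations of simple reflections $s_k s_l=s_l s_k$ with $\lan h_k,\al_l\ran=0$, so the associated $R(2)$-algebras are diagonal matrix algebras and the induction reorders freely. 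Equivalently, characterize $S_{[\redez]}(\beta)$ intrinsically as the unique (up to grading shift) self-dual simple module whose restrictions to parabolics witness only positive roots $\geq_{[\redez]}\beta$ on the left and $\leq_{[\redez]}\beta$ on the right.

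Next I would prove reality, property (1). The tool is the Kashiwara--Kang--Kim theory of normalized $R$-matrices on $\Rep(R)$: for any simple $M$, one shows $M\conv M$ is simple iff the self R-matrix $\rmat{M,M}$ is an isomorphism up to a scalar. For cuspidal modules this is a theorem of McNamara, deduced by computing the head and socle of $S_{[\redez]}(\beta)\conv S_{[\redez]}(\beta)$ via Shuffle/Mackey filtrations and the convex order: all composition factors must be supported on the single root $2\beta$, forcing the product to be simple. Iterating, $S_{[\redez]}(\beta)^{\conv r}$ is simple for all $r\ge 0$. In particular $S_{[\redez]}(\beta)$ is real.

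For property (2), the first step is the triangular decomposition of standard modules with respect to $<^\tb_\redez$. Writing $\Delta_{[\redez]}(\um)\seteq \Delta_{[\redez]}(\beta_1)^{\conv m_1}\conv\cdots\conv \Delta_{[\redez]}(\beta_\N)^{\conv m_\N}$ and using the Mackey/shuffle formula together with the convexity of $<_\redez$ (the Levendorskii--Soibelman style commutation), one verifies
\[
[\Delta_{[\redez]}(\um)]=[\Stom]+\sum_{\um'<^\tb_\redez \um} c_{\um,\um'}\,[S_{[\redez]}(\um')]
\]
in $[\Rep(R)]$ for some Laurent polynomials $c_{\um,\um'}\in\Z[q^{\pm1}]$. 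Hence $\{[\Stom]\}_\um$ and $\{[\Delta_{[\redez]}(\um)]\}_\um$ are related by a unitriangular change of basis and span the same free $\Z[q^{\pm1}]$-submodule of $[\Rep(R)]$. The final step is to match this submodule with the dual PBW basis of $U_\A^-(\fg)^\vee$ under the isomorphism \eqref{eq:KLRU}: this can be done either by computing the character of $S_{[\redez]}(\beta_k)$ using the crystal basis and comparing with Lusztig's PBW root vectors, or by checking the defining commutation relations of the PBW-basis directly using the $R$-matrices between cuspidals.

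The main obstacle is the dependence only on $[\redez]$, together with the reality statement (1), which is where the convex order really enters: one needs the combinatorial fact that among all possible composition factors of $S_{[\redez]}(\beta)\conv S_{[\redez]}(\beta)$ or $\Stom$, only the ``diagonal'' factor survives the convex constraint. This relies on a careful Mackey analysis and on the affine reality of cuspidal modules, and is the technical heart of the theorem.
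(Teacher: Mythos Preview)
The paper does not prove this theorem; it is quoted as a background result with the citation \cite{Kato12,Mc12,Oh15E} and no argument is supplied. So there is no ``paper's own proof'' to compare against.

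That said, your outline is a faithful sketch of how the cited references actually establish the result, and it is essentially correct. A couple of minor remarks. First, your invocation of the Kang--Kashiwara--Kim $R$-matrix machinery for reality is anachronistic and unnecessary here: McNamara's proof that cuspidals are real (and more generally that proper standard modules have simple head) is purely via the Mackey filtration and the convexity of the order, exactly as you say in the next sentence; you do not need normalized $R$-matrices at all for (1). Second, the claim that $S_{[\redez]}(\beta)$ depends only on the commutation class is immediate once you adopt the intrinsic characterization you mention (unique self-dual simple with the prescribed restriction behavior), since that characterization refers only to the convex partial order $\prec_{[\redez]}$; the ``commuting simple reflections give diagonal $R(2)$'' argument is correct but is really showing something stronger, namely that the standard modules themselves are well-defined up to isomorphism. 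Finally, for the matching in (2), the cleanest route in the literature is the one you list first: compute graded characters of the cuspidals and compare with Lusztig's dual PBW vectors, using that both satisfy the same unitriangular relations against the upper global basis. The ``check the LS-type commutation relations directly via $R$-matrices'' alternative works too but is considerably more involved.
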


\subsection{Categorifications via modules over untwisted quantum affine algebras}

\begin{definition}[ \cite{HL10,HL11,KKKOIV}] \label{def: VQ(beta)}
Fix any $[Q] \in \lf Q \rf$ of finite type $A_n$ or $D_n$,
and any positive root $\beta\in \PR$ with $\Omega_{Q}(\beta)=(i,p)$.
\begin{enumerate}
\item[{\rm (i)}] We set the $U_q'(\g^{(1)})$-module ($\g^{(1)}=A_n^{(1)}$ , $D_n^{(1)}$) $V^{(1)}_{Q}(\beta)$ defined as follows$\colon$
$$V^{(1)}_{Q}(\beta) \seteq  V(\varpi_{i})_{(-q)^{p}}.$$
\item[{\rm (ii)}] We set the $U_q'(\g^{(2)})$-module ($\g^{(2)}=A_n^{(2)}$ , $D_n^{(2)}$) $V^{(2)}_{Q}(\beta)$ defined as follows$\colon$
$$V^{(2)}_{Q}(\beta) \seteq  V(\varpi_{i^\star})_{((-q)^{p})^\star},$$
where
\begin{align}
(i^\star,((-q)^{p})^\star) \seteq
\begin{cases}
(i,(-q)^{p}) & \text{ if } \g^{(2)}=A^{(2)}_n \text{ and } 1 \le i \le \left\lfloor \dfrac{n+1}{2} \right\rfloor, \\
(n+1-i,(-1)^n(-q)^{p}) & \text{ if } \g^{(2)}=A^{(2)}_n \text{ and } \left\lfloor \dfrac{n+1}{2} \right\rfloor \le i \le n , \\
(i,(\sqrt{-1})^{n-i}(-q)^{p}) & \text{ if } \g^{(2)}=D^{(2)}_n \text{ and } 1 \le i \le n-2, \\
(n-1,(-1)^i(-q)^{p})& \text{ if } \g^{(2)}=D^{(2)}_n \text{ and } n-1 \le i \le n.
\end{cases}
\end{align}
\end{enumerate}

\begin{enumerate}
\item[{\rm (1)}]
We define the smallest abelian full subcategory
$\mathcal{C}^{(\ii)}_Q$ $(\ii=1,2)$ of $\mathcal{C}_{\g^{(\ii)}}$ such that
\begin{itemize}
\item[({\rm a})] it is stable by taking subquotient, tensor product and extension,
\item[({\rm b})] it contains $V^{(\ii)}_Q(\beta)$ for all $\beta \in \PR$.
\end{itemize}
\item[{\rm (2)}] We define the smallest abelian full subcategory
$\mathcal{C}^{(\ii)}_\Z$ $(\ii=1,2)$ of $\mathcal{C}_{\g^{(\ii)}}$ such that
\begin{itemize}
\item[({\rm a})] it is stable by taking subquotient, tensor product and extension,
\item[({\rm b})] it contains $V^{(\ii)}_Q(\beta)^{k*}$ for all $\beta \in \PR$ and all $k \in \Z$.
\end{itemize}
\end{enumerate}
\end{definition}
Note that the definition $\mathcal{C}^{(\ii)}_{\Z}$ does not depend on the choice of $Q$ and its height function.

\begin{theorem} [\cite{HL11,KKKOIV}]\label{thm: HLU}
We have a ring isomorphism given as follows$\colon$ For any $Q$ and $Q'$,
\begin{align}
\left[\mathcal{C}^{(1)}_Q \right] \simeq U^-_{\A}(\fg)^{\vee}|_{q=1} \simeq \left[\mathcal{C}^{(2)}_{Q'} \right],
\label{eq:HLU}
\end{align}
where $\left[\mathcal{C}^{(\ii)}_Q \right]$ denotes the Grothendieck ring of $\mathcal{C}^{(\ii)}_Q$ $(\ii=1,2)$.
\end{theorem}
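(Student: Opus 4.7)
The plan is to prove the two isomorphisms separately and then identify them via the universal categorification of $U^-_\A(\fg)^\vee$ by modules over the quiver Hecke algebra $R_\fg$ (Theorem \ref{Thm:categorification}). Concretely, I would construct a commutative triangle
\begin{equation*}
\xymatrix@R=3ex@C=6ex{
& [\Rep(R_\fg)]|_{q=1} \simeq U^-_\A(\fg)^\vee|_{q=1} \ar@{<->}[dl]_{\simeq}\ar@{<->}[dr]^{\simeq} \\
[\mathcal{C}^{(1)}_Q] \ar@{<->}[rr]_{\simeq} && [\mathcal{C}^{(2)}_{Q'}]
}
\end{equation*}
of ring isomorphisms, in which each slanted arrow is induced by a monoidal exact functor $\F^{(\ii)}_Q:\Rep(R_\fg)\to \mathcal{C}^{(\ii)}_Q$ obtained from the quantum affine Schur--Weyl duality of \cite{KKK13A,KKK13B}.

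For the left-hand slanted arrow, following Hernandez--Leclerc, I would first check that $\mathcal{C}^{(1)}_Q$ is indeed a well-defined monoidal subcategory by verifying, with the help of the denominator formulas $d_{k,l}(z)$ for $U_q'(\g^{(1)})$ of type $ADE$, that the spectral parameters $(-q)^p$ read off from $\Omega_Q(\beta)=(i,p)$ never produce poles: this follows because, in $\Gamma_Q$, pairs of vertices $(i,p)$ and $(j,q)$ with $|p-q|$ matching a root of $d_{i,j}(z)$ are exactly the pairs $(\al,\beta)$ whose sum is a root. The fundamental modules $V^{(1)}_Q(\al_i)$ are known to be real simple and to generate the monoidal category. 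Matching $[V^{(1)}_Q(\beta)]$ with the corresponding dual PBW vector of $U^-_\A(\fg)^\vee|_{q=1}$ and comparing structure constants (via short exact sequences coming from the $R$-matrices at the first nontrivial pole of $d_{i,j}(z)$) identifies $[\mathcal{C}^{(1)}_Q]$ with $U^-_\A(\fg)^\vee|_{q=1}$.

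For the right-hand slanted arrow, I would set up the Schur--Weyl datum $\Xi$ whose underlying family of good modules is $\{V^{(2)}_{Q'}(\al_i)\}_{i\in I}$, with deformation parameters dictated by Definition \ref{def: VQ(beta)}(ii); the corresponding quiver Hecke algebra $R^\Xi$ is then $R_\fg$ because of the shape of the denominator formulas for $U_q'(\g^{(2)})$ transported through the coordinate system of $\Gamma_Q$. This produces an exact monoidal functor $\F^{(2)}_{Q'}\colon \Rep(R_\fg)\to \mathcal{C}^{(2)}_{Q'}$. Combining with Theorem \ref{Thm:categorification} and specializing at $q=1$ gives the second isomorphism, provided I can show $\F^{(2)}_{Q'}$ induces a bijection on isomorphism classes of simples. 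Simples of $\mathcal{C}^{(2)}_{Q'}$ are heads of ordered tensor products of the $V^{(2)}_{Q'}(\al_i)_a$'s by Theorem \ref{Thm: basic properties}(3), which matches the description of simples of $\Rep(R_\fg)$ as heads of convolution products on the $R_\fg$-side.

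The main obstacle is the last step: verifying that $\F^{(2)}_{Q'}$ sends simples to simples and is surjective on simples. The tool is Dorey's rule for $U_q'(\g^{(2)})$, which must match the combinatorics of $[Q']$-minimal pairs in $\Gamma_{Q'}$; this is exactly the twisted analogue carried out in \cite{KKKO14S} and relies on a case-by-case translation between denominator formulas, $R$-matrices, and convex orders $\prec_{[Q']}$. Once this is in place, exactness of $\F^{(2)}_{Q'}$ together with the classification of simples forces it to be a categorical equivalence after passage to Grothendieck rings, and composing the two slanted arrows gives the advertised ring isomorphism $[\mathcal{C}^{(1)}_Q]\simeq[\mathcal{C}^{(2)}_{Q'}]$ independently of the choice of $Q$ and $Q'$.
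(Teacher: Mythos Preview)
Your proposal is correct and aligns with the approach the paper imports from its references: this theorem is stated without proof, attributed to \cite{HL11,KKKOIV}. The mechanism you outline---constructing the Schur--Weyl duality functors $\F^{(\ii)}_Q$, identifying the underlying quiver Hecke algebra with $R_\fg$ via the denominator formulas, and establishing bijections on simples through Dorey's rule and $[Q]$-minimal pairs---is precisely the content of Theorem~\ref{thm:gQASW duality 2}, which the paper cites from \cite{KKK13B,KKKOIV} as the source of the isomorphisms in \eqref{eq:HLU}.
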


\begin{theorem}[\cite{HL11,KKKOIV,Oh15E}] \label{thm: HLB}
A dual PBW-basis associated to $[Q]$ and the upper global basis of
$U^-_{\A}(\fg)^{\vee}$
are categorified by the modules over $U_q'(\g^{(t)})$ in the following sense $(t=1,2)\colon$
\begin{enumerate}
\item[{\rm (1)}] The set of all simple modules in $\mathcal{C}^{(\ii)}_Q$ corresponds to
the upper global basis of $U^-_{\A}(\fg)^{\vee}|_{q=1}$.
\item[{\rm (2)}] For each $\um \in \Z_{\ge 0}^\N$, define the  $U_q'(\g)$-module
$\VtoQm$ by $
V_{Q}^{(\ii)}(\beta_1)^{\tens m_1}\tens\cdots \tens V_{Q}^{(\ii)}(\beta_\N)^{\tens m_\N}$.
Then the set $\{ \VtoQm  \ | \ \um \in \Z_{\ge 0}^\N\}$ corresponds to
the dual PBW-basis under the isomorphism in \eqref{eq:HLU}.
\end{enumerate}
\end{theorem}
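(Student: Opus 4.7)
The theorem is essentially an assembly of known results, so my proof plan is to reduce everything to facts about the quiver Hecke side and transport them through the Schur–Weyl duality functor $\F^{(\ii)}_Q$ of \eqref{eq: CQ1} and \eqref{eq: CQ1 CQ2}. First I would invoke the fact that $\F^{(\ii)}_Q : \Rep(R_\fg) \isoto \mathcal{C}^{(\ii)}_Q$ is an equivalence of monoidal categories that sends simples to simples and respects convolution/tensor products (up to grading forgetfulness). This immediately transports any statement about simples or about products of distinguished objects on the $R_\fg$-side to the affine side.

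For part (1), my plan is to apply Theorem \ref{thm:categorification 2} of Rouquier and Varagnolo–Vasserot, which identifies the set of self-dual simple $R_\fg$-modules with the upper global basis of $U^-_\A(\fg)^\vee$ under the isomorphism \eqref{eq:KLRU}. Since $\F^{(\ii)}_Q$ induces a bijection between isomorphism classes of simple objects, composing with \eqref{eq:HLU} yields a correspondence between simples of $\mathcal{C}^{(\ii)}_Q$ and the upper global basis of $U^-_\A(\fg)^\vee|_{q=1}$. The specialization $q = 1$ simply records the fact that on the affine side the grading shift has been forgotten.

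For part (2), my plan is to use the categorification of the dual PBW-basis via the cuspidal modules $S_{[Q]}(\beta)$ of Kato–McNamara–Oh recalled just before this theorem: the convolution products $\Stom=S_{[Q]}(\beta_1)^{\conv m_1}\conv\cdots\conv S_{[Q]}(\beta_\N)^{\conv m_\N}$ represent the dual PBW-basis of $U^-_\A(\fg)^\vee$ associated to $[Q]$. I would then argue that under $\F^{(\ii)}_Q$, the cuspidal module $S_{[Q]}(\beta)$ is sent to the fundamental affine module $V^{(\ii)}_Q(\beta)$, so that $\Stom$ is sent to $\VtoQm$ (using that $\F^{(\ii)}_Q$ is monoidal, convolution on the KLR side becomes tensor on the affine side).

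The main obstacle is therefore verifying the identification $\F^{(\ii)}_Q(S_{[Q]}(\beta)) \simeq V^{(\ii)}_Q(\beta)$. The key ingredients here are: the construction of the Schur–Weyl functor from the denominator data $d_{k,l}(z)$, the reading of these denominators off $\Gamma_Q$ (for $\ii=1$) and its twisted analogue (for $\ii=2$), and the normalization of spectral parameters in Definition \ref{def: VQ(beta)}. Concretely, $S_{[Q]}(\beta)$ is characterized up to normalization as the unique real simple module whose convolution products recover the convex order $\prec_Q$, while $V^{(\ii)}_Q(\beta)$ is the unique simple with dominant weight $\varpi_i$ at spectral parameter dictated by $\Omega_Q(\beta)=(i,p)$; matching the coordinates encoded in $\Gamma_Q$ with the parameters used to build $\F^{(\ii)}_Q$ completes the argument. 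With the identification in hand, parts (1) and (2) follow directly from Theorems \ref{Thm:categorification} and \ref{thm:categorification 2} and the cited references \cite{HL11,KKKOIV,Oh15E}.
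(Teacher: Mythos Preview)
The paper states this theorem without proof, citing \cite{HL11,KKKOIV,Oh15E}; there is no argument in the paper to compare against. Your reconstruction is the natural one and is essentially how the result is assembled in those references: transport the KLR-side categorification of the upper global basis (Theorem~\ref{thm:categorification 2}) and of the dual PBW-basis (the Kato--McNamara theorem) through the tensor functor $\F^{(\ii)}_Q$, using that $\F^{(\ii)}_Q(S_{[Q]}(\beta)) \simeq V^{(\ii)}_Q(\beta)$ and that $\F^{(\ii)}_Q$ induces a bijection on simples (Theorem~\ref{thm:gQASW duality 2}(2)).

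One caution: you invoke that $\F^{(\ii)}_Q$ is an \emph{equivalence} of monoidal categories, but this is not established---it is precisely the content of \cite[Conjecture~4.7]{KKKOIV}, whose Langlands analogue appears in the present paper as Conjecture~\ref{conj}. What is actually proved in Theorem~\ref{thm:gQASW duality 2} is that $\F^{(\ii)}_Q$ is exact, monoidal, sends $S_{[Q]}(\beta)$ to $V^{(\ii)}_Q(\beta)$, and induces a bijection on isomorphism classes of simples and a ring isomorphism on Grothendieck groups. These weaker properties are all your argument uses, so the proof survives; but you should downgrade the claim from ``equivalence'' to ``exact tensor functor bijective on simples'' to avoid asserting an open conjecture.
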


We note that Hernandez-Leclerc (\cite{HL11})
gave $E_n^{(1)}$-analogues of the above theorems.

\subsection{Generalized quantum affine Schur-Weyl dualities} \label{subsec: SW datum} 
In this subsection, we briefly review the generalized quantum affine Schur-Weyl duality which was studied in \cite{KKK13A,KKK13B,KKKOIII,KKKOIV}.

Let $\mathcal{S}$ be an index set. A Schur-Weyl datum $\Xi$ is a quintuple $$(U_q'(\g),J,X,s,\{V_s\}_{s \in \mathcal{S}})$$ consisting of
{\rm (a)} a quantum affine algebra $U_q'(\g)$, {\rm (b)} an index set $J$, {\rm (c)} two maps $X:J \to \ko^\times, \ s: J \to \mathcal{S}$,
{\rm (d)} a family of good $U'_q(\g)$-modules $\{V_s\}$ indexed by $\mathcal{S}$.

\medskip

For a given $\Xi$, we define a quiver $\Gamma^{\Xi}=(\Gamma^{\Xi}_0,\Gamma^{\Xi}_1)$ in the following way$\colon$
{\rm (i)} $\Gamma^{\Xi}_0= J$,
{\rm (ii)} for $i,j \in J$, we assign $\mathtt{d}_{ij}$ many arrows from $i$ to $j$, where $\mathtt{d}_{ij}$ is the order of the zero of
$d_{V_{s(i)},V_{s(j)}}(z_2/z_1)$ at $X(j)/X(i)$. We call $\Gamma^{\Xi}$ the {\em Schur-Weyl quiver associated to $\Xi$}.

For a Schur-Weyl quiver $\Gamma^{\Xi}$, we have
\begin{itemize}
\item a symmetric Cartan matrix $\cmA^{\Xi}=(a^{\Xi}_{ij})_{i,j \in J}$ by
\begin{align} \label{eq: sym Cartan mtx}
a^{\Xi}_{ij} =  2 \quad \text{ if } i =j, \quad a^{\Xi}_{ij} = -\mathtt{d}_{ij}-\mathtt{d}_{ji} \quad \text{ if } i \ne j,
\end{align}
\item the set of polynomials $(\mathcal{Q}^{\Xi}_{i,j}(u,v))_{i,j \in J}$
$$
\mathcal{Q}^{\Xi}_{i,j}(u,v) =(u-v)^{\mathtt{d}_{ij}}(v-u)^{\mathtt{d}_{ji}} \quad \text{ if } i \ne j.
$$
\end{itemize}

We denote by $R^{\Xi}$ the symmetric quiver Hecke algebra associated with $(\mathcal{Q}^{\Xi}_{i,j}(u,v))$.

\begin{theorem} [\cite{KKK13A}] \label{thm:gQASW duality} For a given ${\Xi}$, there exists a functor
$$\F : \Rep(R^{\Xi}) \rightarrow \mathcal{C}_\g.$$
Moreover, $\F$ satisfies the following properties$\colon$
\begin{enumerate}
\item[{\rm (1)}] $\F$ is a tensor functor; that is, there exist $U_q'(\g)$-module isomorphisms
$$\F(R^{\Xi}(0)) \simeq \ko \quad \text{ and } \quad \F(M_1 \circ M_2) \simeq \F(M_1) \tens \F(M_2)$$
for any $M_1, M_2 \in \Rep(R^{\Xi})$.
\item[{\rm (2)}] If the underlying graph of $\Gamma^{\Xi}$ is a Dynkin diagram of finite type $ADE$, then $\F$ is exact
and $R^{\Xi}$ is isomorphic to the \KLR algebra associated to
$\fg$ of finite type $ADE$.
\end{enumerate}
\end{theorem}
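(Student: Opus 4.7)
My plan is to construct $\F$ explicitly by means of a universal affinization of the tensor product of the good modules $V_s$, and then to verify the \KLR relations and the additional properties in turn. For each $\mathsf{b}\in\rl^+$ of height $n$, I would form the $U_q'(\g)$-module
$$ \mathcal{V}^{\mathsf{b}} \seteq \bigoplus_{\nu \in J^{\mathsf{b}}} (V_{s(\nu_1)})_{X(\nu_1) z_1} \tens \cdots \tens (V_{s(\nu_n)})_{X(\nu_n) z_n},$$
where $z_1,\ldots, z_n$ are formal parameters, and endow it with a commuting left action of $R^\Xi(\mathsf{b})$: $e(\nu)$ acts as the projection onto the $\nu$-summand, $x_k$ as multiplication by $z_k$, and $\tau_m$, on the $\nu$-summand, as the renormalized R-matrix $\Rnorm_{V_{s(\nu_m)},V_{s(\nu_{m+1})}}$ acting on the $m$-th and $(m{+}1)$-th factors and followed by the tautological relabeling. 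Then I would set
$$\F(M) \seteq \mathcal{V}^{\mathsf{b}} \tens_{R^\Xi(\mathsf{b})} M \qquad \text{for } M\in\Rep(R^\Xi(\mathsf{b})).$$

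Next I would verify the \KLR relations. The $x$- and $e$-relations and the mixed $xe$/$\tau e$-relations are immediate from the construction. The key quadratic relation $\tau_m^2\,e(\nu) = \mathcal{Q}^\Xi_{\nu_m,\nu_{m+1}}(x_m,x_{m+1})\,e(\nu)$ would follow from the identity $\Rnorm_{V,W}\Rnorm_{W,V}=\text{scalar}$, together with the fact that the order of vanishing of this scalar at $z_2/z_1=X(j)/X(i)$ equals $\mathtt{d}_{ij}+\mathtt{d}_{ji}$ by Theorem \ref{Thm: basic properties}(2) and the very definition of $\mathtt{d}_{ij}$; this matches $(u-v)^{\mathtt{d}_{ij}}(v-u)^{\mathtt{d}_{ji}}$. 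The braid relations among the $\tau_m$ reduce to the Yang--Baxter equation for normalized R-matrices, and the Leibniz-type mixed relation $\tau_m x_k - x_{s_m(k)}\tau_m$ encodes the residue at the diagonal of~$\Rnorm$.

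For the tensor structure, the concatenation splitting $J^{\mathsf{b}_1+\mathsf{b}_2}\supset J^{\mathsf{b}_1}\times J^{\mathsf{b}_2}$ induces a natural $U_q'(\g)$-isomorphism that exchanges the $(R^\Xi(\mathsf{b}_1)\tens R^\Xi(\mathsf{b}_2))$-action with the induced convolution action, which gives $\F(M_1\conv M_2)\simeq \F(M_1)\tens\F(M_2)$; and $\F(R^\Xi(0))=\mathcal{V}^0=\ko$ is immediate. When $\Gamma^\Xi$ is of finite $ADE$ type, $\cmA^\Xi$ is a symmetric finite $ADE$ Cartan matrix and the polynomials $\mathcal{Q}^\Xi_{ij}$ are of the standard form \eqref{eq:Q} with $t_{i,j;-a_{ij},0}=\pm 1$, so comparison of presentations yields $R^\Xi\simeq R_\fg$. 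Exactness of $\F$ I would then derive from a flatness statement for $\mathcal{V}^{\mathsf{b}}$ as a right $R^\Xi(\mathsf{b})$-module, using that in the finite-type situation the $\mathcal{V}^{\mathsf{b}}$ are free of finite rank over the polynomial subalgebra $\ko[x_1,\ldots,x_n]$ and that the universal Mackey filtration provided by the good-module tensor structure splits.

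The main obstacle will be Step~2: aligning the quadratic and braid relations on the $\tau_m$ with the pole/zero structure of $\Rnorm$. The quadratic relation works only because $\mathcal{Q}^\Xi_{ij}$ was \emph{defined} using the order of the zero of $d_{V_{s(i)},V_{s(j)}}$ at $X(j)/X(i)$; without that matching the relation would fail outright. The braid relation, although morally the Yang--Baxter equation, requires a delicate cancellation of poles coming from the three pairwise R-matrices, and care is needed so that the localizations involved in defining $\Rnorm$ do not spoil integrality of the action on $\mathcal{V}^{\mathsf{b}}$. Proving the flatness needed for exactness in the $ADE$ case is the second subtle point, since it is precisely this that upgrades $\F$ from right-exact to exact.
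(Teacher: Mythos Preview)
The paper does not supply its own proof of this theorem; it is quoted from \cite{KKK13A}. Your bimodule construction---building $\mathcal{V}^{\mathsf{b}}$ from affinized tensor products and letting $e(\nu)$, $x_k$, $\tau_m$ act as projection, spectral parameter, and an R-matrix intertwiner---is indeed the architecture of \cite{KKK13A}, so in broad outline you are on the right track.

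There is, however, a genuine gap in your verification of the quadratic relation. The \emph{normalized} R-matrix satisfies $\Rnorm_{V,W}\circ\Rnorm_{W,V}=\mathrm{id}$, not a nonconstant scalar; so no ``order of vanishing'' argument applied to $\Rnorm\Rnorm$ will produce $\mathcal{Q}^\Xi_{ij}(x_i,x_j)$. What \cite{KKK13A} actually does is pass to a \emph{further} renormalization: one works in local coordinates around the prescribed spectral points $X(i)$ and replaces $\Rnorm$ by an intertwiner of the form (schematically) $d_{V_{s(i)},V_{s(j)}}(z_j/z_i)\,\Rnorm$, which is now regular but acquires zeros; it is the product of \emph{these} operators that yields the polynomial $(u-v)^{\mathtt{d}_{ij}}(v-u)^{\mathtt{d}_{ji}}$. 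Without this step your $\tau_m$ is not even a well-defined endomorphism of $\mathcal{V}^{\mathsf{b}}$ over $\ko[z_1,\dots,z_n]$, because $\Rnorm$ has poles. Relatedly, \cite{KKK13A} does not work with naive polynomial spectral parameters but with a suitable completion $\widehat{V}^{\otimes\mathsf{b}}$, precisely so that the renormalized intertwiners and the KLR relations make sense integrally.

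Your sketch of exactness in the $ADE$ case is also too loose: the flatness of $\mathcal{V}^{\mathsf{b}}$ over $R^\Xi(\mathsf{b})$ is the substantive content here, and ``the Mackey filtration splits'' is not a proof. In \cite{KKK13A} this is established by showing that the relevant bimodule, after completion, is finitely generated projective on the KLR side, which in the finite $ADE$ situation uses the structure theory of $R_\fg$ rather than an abstract freeness statement.
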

We call the functor $\F$ the {\em generalized quantum affine Schur-Weyl duality functor}.

\begin{theorem} [\cite{KKK13B,KKKOIV}] \label{thm:gQASW duality 2}
Let $U_q'(\g^{(t)})$ be a quantum affine algebra of type $A^{(t)}_{n}$  $($resp.\ $D^{(\ii)}_{n})$
and let $Q$ be a Dynkin quiver of finite type $A_n$  $($resp.\ $D_n)$ for $\ii=1,2$.
Take $J$ and $\mathcal{S}$ as the set of simple roots $\Pi$ associated to $Q$.
We define two maps $$ s: \Pi \to \{ V(\varpi_i) \ | \ i \in I_0 \} \quad \text{ and } \quad X: \Pi \to  \ko^\times $$ as follows$\colon$
for $\al \in \Pi$ with $\Omega_Q(\al)=(i,p)$, we define
$$ s(\al)=\begin{cases} V(\varpi_i) & \text{if $\g^{(1)}=A^{(1)}_{n}$ or $D^{(1)}_{n}$,} \\
V(\varpi_{i^\star}) & \text{if $\g^{(2)}=A^{(2)}_{n}$ or $D^{(2)}_{n}$,}
\end{cases}
\qquad X(\al)=
\begin{cases} (-q)^p  & \text{if $\g^{(1)}=A^{(1)}_{n}$ or $D^{(1)}_{n}$,} \\
((-q)^p)^\star & \text{if $\g^{(2)}=A^{(2)}_{n}$ or $D^{(2)}_{n}$.} \end{cases}$$
Then we have the followings$\colon$
\begin{itemize}
\item[{\rm (1)}] The underlying graph of $\Gamma^{\Xi}$ coincides with the one of $Q$. Hence the functor
$$\F^{(\ii)}_Q: \Rep(R^{\Xi}) \rightarrow \mathcal{C}^{(\ii)}_Q \quad (\ii =1,2)$$
in {\rm Theorem \ref{thm:gQASW duality}} is exact.
\item[{\rm (2)}] The functor $\F^{(\ii)}_Q$ induces a bijection from
the set of the isomorphism classes of simple objects of
$\Rep(R^{\Xi})$ to that of $\mathcal{C}^{(\ii)}_Q$. In particular,
$\F^{(\ii)}_Q$ sends $S_Q(\beta) \seteq S_{[Q]}(\beta)$ to
$V^{(\ii)}_Q(\beta)$. Moreover, the induced bijection between the
set of the isomorphism classes of simple objects of
$\mathcal{C}^{(1)}_Q$ and that of $\mathcal{C}^{(2)}_Q$ preserves
the dimensions.
\item[{\rm (3)}] The functors $\F^{(1)}_Q$ and $\F^{(2)}_Q$ induce the ring isomorphisms in \eqref{eq:HLU}. 
\end{itemize}
\end{theorem}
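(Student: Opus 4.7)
The plan is to split the statement into three pieces corresponding to (1), (2), (3), and to reduce everything to information extractable from the AR-quiver $\Gamma_Q$ together with the known denominator formulas $d_{k,l}(z)$ and Dorey's rules for $U_q'(A_n^{(\ii)})$ and $U_q'(D_n^{(\ii)})$.

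For (1), I would verify the claim about $\Gamma^{\Xi}$ vertex by vertex. Suppose $\al,\al'\in\Pi$ correspond to $\Omega_Q(\al)=(i,p)$ and $\Omega_Q(\al')=(j,q)$. Since $\al,\al'$ are simple roots of the AR-quiver, the vertices $i,j$ of $Q$ are adjacent if and only if one of $\al,\al'$ is a source and the other a sink of $Q$, in which case $|p-q|=1$. Reading $s$ and $X$ off Definition \ref{def: VQ(beta)}, the ratio $X(\al')/X(\al)$ is then exactly $(-q)^{\pm 1}$ (or its $\star$-twist); inspection of the denominators $d_{i,j}^{A_n^{(\ii)}}(z)$ and $d_{i,j}^{D_n^{(\ii)}}(z)$ in the tables of \cite{AK,DO94,KKK13B,Oh14R} shows a simple zero at exactly this value and no zero at $X(\al)/X(\al')$ or at any ratio between non-adjacent pairs. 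This gives $\mathtt{d}_{\al\al'}+\mathtt{d}_{\al'\al}=1$ for edges of $Q$ and $=0$ otherwise, so the underlying graph of $\Gamma^{\Xi}$ is $Q$. Exactness of $\F^{(\ii)}_Q$ and the identification $R^{\Xi}\simeq R_{\fg}$ then follow from Theorem \ref{thm:gQASW duality}(2).

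For (2), I would first check the base case $\beta=\al_i\in\Pi$: by construction the functor sends $R^{\Xi}(\al_i)$ to $V_{s(\al_i),X(\al_i)}=V^{(\ii)}_Q(\al_i)$. To extend to a general positive root $\beta\in\PR$, I would use the cuspidal realisation: for any reduced expression $\redez\in[Q]$ and each $\beta^{\redez}_k$, the module $S_Q(\beta^{\redez}_k)$ is the head of the ordered convolution $S_Q(\al_{i_1})\conv\cdots\conv S_Q(\al_{i_{k-1}})\conv S_Q(\al_{i_k})$ (up to grading shift), as guaranteed by the PBW categorification of \cite{Kato12,Mc12,Oh15E}. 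Since $\F$ is a tensor functor, $\F^{(\ii)}_Q(S_Q(\beta^{\redez}_k))$ is the head of the corresponding tensor product of fundamentals; comparing with Theorem \ref{thm: HLB}(2) and using again the denominator data to ensure the head is $V^{(\ii)}_Q(\beta^{\redez}_k)$, I obtain $\F^{(\ii)}_Q(S_Q(\beta))\simeq V^{(\ii)}_Q(\beta)$ for all $\beta\in\PR$. An arbitrary simple of $\Rep(R^{\Xi})$ is the head of some $S_Q(\beta_1)^{\conv m_1}\conv\cdots\conv S_Q(\beta_\N)^{\conv m_\N}$, so its image under $\F^{(\ii)}_Q$ is the head of $\VtoQm$, which is simple and enumerates every simple of $\mathcal{C}^{(\ii)}_Q$ by Theorem \ref{thm: HLB}(1). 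This yields the asserted bijection. The dimension equality between the simples of $\mathcal{C}^{(1)}_Q$ and $\mathcal{C}^{(2)}_Q$ comes from the fact that both are indexed by the same standard $\um$ and both heads are governed by the same KLR-module, so their characters (hence dimensions as graded objects) match.

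For (3), the induced map on Grothendieck rings is a well-defined ring homomorphism because $\F^{(\ii)}_Q$ is a tensor functor and exact; it is an isomorphism by the bijection on simples from (2), and combining with Theorem \ref{Thm:categorification} and Theorem \ref{thm: HLU} recovers \eqref{eq:HLU}. The main obstacle in the whole argument is the matching $\F^{(\ii)}_Q(S_Q(\beta))\simeq V^{(\ii)}_Q(\beta)$ for non-simple roots $\beta$: one must show inductively that the intertwiner structure of normalised $R$-matrices $\Rnorm_{V(\varpi_i),V(\varpi_j)}$ used to define $\F$ matches, under the identification $\Gamma^\Xi\simeq Q$, the cuspidal convolution recipe on the KLR side, which requires tight control of the order of zeros of the denominators at the specific ratios prescribed by $\Omega_Q$. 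Once this is established, everything else follows formally.
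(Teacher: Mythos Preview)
This theorem is not proved in the present paper at all: it is quoted from \cite{KKK13B,KKKOIV}, so there is no ``paper's own proof'' to match.  What the paper does prove is the twisted-adapted analogue (Theorems~\ref{thm: exact functor}, \ref{thm: FQ2 SQ}, \ref{thm: simples to simples}), and comparing your sketch with those arguments exposes two genuine gaps in your part (2).

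First, your description of $S_Q(\beta^{\redez}_k)$ as the head of $S_Q(\al_{i_1})\conv\cdots\conv S_Q(\al_{i_k})$ is not correct; cuspidal modules are not built that way.  The method actually used (see the proof of Theorem~\ref{thm: FQ2 SQ}) is induction on $\het(\gamma)$ via a $[Q]$-minimal pair $(\al,\beta)$ of $\gamma$: McNamara's six-term exact sequence
\[
0\to S_Q(\gamma)\to S_Q(\al)\conv S_Q(\beta)\to S_Q(\beta)\conv S_Q(\al)\to S_Q(\gamma)\to 0
\]
is pushed through the exact tensor functor, and Dorey's rule is invoked to see that $V_Q(\beta)\otimes V_Q(\al)$ is not simple, whence $\F_Q(S_Q(\gamma))$ is the image of a non-zero map $V_Q(\beta)\otimes V_Q(\al)\to V_Q(\al)\otimes V_Q(\beta)$ and hence equals $V_Q(\gamma)$ by \cite{KKKO14S}.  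Your convolution-of-simple-roots recipe does not produce this.

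Second, your passage from ``$\F$ sends $S_Q(\beta)$ to $V_Q(\beta)$'' to ``$\F$ induces a bijection on simples'' is circular.  Knowing $\F(\Stom)\simeq\VtoQm$ does not give $\F(\hd\Stom)\simeq\hd\VtoQm$; an exact functor need not commute with heads.  The actual argument (cf.\ Theorem~\ref{thm: simples to simples} and \cite[Theorem~4.5]{KKKOIV}) requires the additional inputs of Lemma~\ref{lem: k<l no poles} and the real-module machinery of \cite{KKKO14S} to guarantee that images of simples stay simple.  Likewise, your dimension-preservation remark (``governed by the same KLR-module'') is not an argument: dimensions of the $U_q'(\g^{(t)})$-modules are not read off from the KLR side, and in \cite{KKKOIV} this is obtained by a separate comparison of the two quantum affine categories.
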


\section{Isomorphism between Grothendieck rings}
In this section, we first introduce subcategories $\mC_{\mQ}$ and $\mC_\Z$ of
$\mathcal{C}_{B^{(1)}_{n}}$ or
$\mathcal{C}_{C^{(1)}_{n}}$.

\begin{definition}[\cite{OS16B,OS16C}] \label{def: CmQ} \hfill 
\begin{enumerate}
\item[{\rm (i)}]
Let us define $\mC_\mQ^{B^{(1)}_{n}}$ \ro resp.\ $\mC_\mQ^{C^{(1)}_{n}}$\rof\
as the smallest abelian full subcategory
of $\mathcal{C}_{B^{(1)}_{n}}$ \ro resp.\ $\mathcal{C}_{C^{(1)}_{n}}$\rof\ such that
\begin{itemize}
\item[({\rm a})] it is stable by taking 
subquotient, tensor product and extension,
\item[({\rm b})] it contains $V_\mQ(\beta)$ for all $\beta \in \PR_{A_{2n-1}}$ $($resp.\ $\PR_{D_{n+1}})$.
\end{itemize}
\item[{\rm (ii)}] Let us define $\mC_\Z^{B^{(1)}_{n}}$ \ro resp.\ $\mC_\Z^{B^{(1)}_{n}}$\rof\ as
the smallest abelian full subcategory
 of $\mathcal{C}_{B^{(1)}_{n}}$ \ro resp.\ $\mathcal{C}_{C^{(1)}_{n}})$\rof\ such that
\begin{itemize}
\item[({\rm a})] it is stable by taking 
subquotient, tensor product and extension,
\item[({\rm b})] it contains $V_\mQ(\beta)^{k*}$ for all $k\in\Z$ and all $\beta \in \PR_{A_{2n-1}}$ $($resp.\ $\PR_{D_{n+1}})$.
\end{itemize}
\end{enumerate}
We sometimes omit the superscript ${B^{(1)}_{n}}$ or $C^{(1)}_{n}$ if there is no afraid of confusion.
\end{definition}

Note that the definition $\mC_\Z$ does not depend on the choice of $[\mQ]$ in $\lf \mQ \rf$.

\begin{theorem} \label{thm: exact functor} \hfill
\begin{enumerate}
\item[{\rm (1)}] There exists an exact functor
$\F_\mQ:\Rep(R_{A_{2n-1}})\to \mC_\mQ\subset \mathcal{C}_{B^{(1)}_{n}}$ for any $[\mQ]$ of type $A_{2n-1}$.
\item[{\rm (2)}] There exists an exact functor $\F_\mQ:\Rep(R_{D_{n+1}})\to \mC_\mQ
\subset\mathcal{C}_{C^{(1)}_{n}}$ for any $[\mQ]$ of type $D_{n+1}$.
\end{enumerate}
\end{theorem}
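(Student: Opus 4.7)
The plan is to construct $\F_\mQ$ by applying the generalized quantum affine Schur–Weyl duality of Theorem \ref{thm:gQASW duality} with a Schur–Weyl datum $\Xi$ read off from the folded AR-quiver $\widehat{\Upsilon}_{[\mQ]}$, in direct analogy with the construction in Theorem \ref{thm:gQASW duality 2}. Concretely, I would take $J=\mathcal{S}=\Pi$, the set of simple roots of $A_{2n-1}$ (resp.\ $D_{n+1}$), and for $\alpha\in\Pi$ with $\widehat{\Omega}_{[\mQ]}(\alpha)=(\bar i, p)$ set
\[
s(\alpha)\seteq V(\varpi_{\bar i}),\qquad
X(\alpha)\seteq\begin{cases}(-1)^{\bar i}(q_s)^{p} & \text{for }U_q'(B_n^{(1)}),\\ (-q_s)^{p} & \text{for }U_q'(C_n^{(1)}),\end{cases}
\]
so that $s(\alpha)_{X(\alpha)}=V_{\mQ}(\alpha)$ by Definition \ref{def: VmQ(beta)}.

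The heart of the argument is to show that the underlying graph of the resulting Schur–Weyl quiver $\Gamma^{\Xi}$ is the Dynkin diagram of $A_{2n-1}$ (resp.\ $D_{n+1}$); once this is done, Theorem \ref{thm:gQASW duality}(2) automatically produces an exact functor $\F_\mQ\colon \Rep(R^{\Xi})\simeq \Rep(R_{A_{2n-1}})\to \mathcal{C}_{B_n^{(1)}}$ (resp.\ into $\mathcal{C}_{C_n^{(1)}}$). To identify $\Gamma^{\Xi}$, I would compute, for each ordered pair $\alpha=\alpha_i,\beta=\alpha_j\in\Pi$ with $i\ne j$, the multiplicity $\mathtt{d}_{\alpha,\beta}$ of the zero of $d_{s(\alpha),s(\beta)}(z_2/z_1)$ at $z_2/z_1=X(\beta)/X(\alpha)$. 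By Theorem \ref{thm: den dist} this reduces to reading off $\mathtt{o}^{[\mQ]}_{t}(\bar i,\bar j)$ with $t=|p_\alpha-p_\beta|$ from the folded distance polynomial, and hence to computing $\dist_{[\mQ]}(\alpha_i,\alpha_j)$.

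The key dichotomy is the following. If $i,j$ are adjacent in the Dynkin diagram then $\alpha_i+\alpha_j\in\PR$ and by Proposition \ref{pro: BKM minimal} the pair $(\alpha_i,\alpha_j)$ is a $[\mQ]$-minimal pair of $\alpha_i+\alpha_j$, so $\dist_{[\mQ]}(\alpha_i,\alpha_j)=1$ and hence $\mathtt{o}=1$; if $i,j$ are not adjacent then $\alpha_i+\alpha_j\notin\PR$, so no $\um$ can satisfy $\wt(\um)=\alpha_i+\alpha_j$, which forces $(\alpha_i,\alpha_j)$ to be itself $[\mQ]$-simple and gives $\mathtt{o}=0$. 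The direction of the arrow in $\Gamma^{\Xi}$ (i.e.\ whether it is $\mathtt{d}_{\alpha,\beta}$ or $\mathtt{d}_{\beta,\alpha}$ that equals $1$) is dictated by $\prec_{[\mQ]}$ on the pair, which fits the convention that zeros of $d_{V,W}(z)$ distinguish the order of tensor factors. This yields $\mathtt{d}_{\alpha,\beta}+\mathtt{d}_{\beta,\alpha}=1$ for adjacent simple roots and $0$ otherwise, matching the Dynkin diagram of $A_{2n-1}$ (resp.\ $D_{n+1}$) exactly. Having $\Gamma^{\Xi}$ of this form, Theorem \ref{thm:gQASW duality}(2) identifies $R^{\Xi}$ with the symmetric \KLR algebra of the corresponding type and provides exactness, while the image lies inside $\mC_\mQ$ because $\F_\mQ(L(\alpha))=V_\mQ(\alpha)$ for each $\alpha\in\Pi$ and $\mC_\mQ$ is closed under tensor products, subquotients and extensions.

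The delicate point I expect to be the main obstacle is the bookkeeping when two distinct simple roots $\alpha_i,\alpha_j\in\Pi$ lie in the same $\vee$-orbit (so that $\bar i=\bar j$). In that case the denominator $d^{B_n^{(1)}}_{\bar i,\bar j}(z)$ (resp.\ $d^{C_n^{(1)}}_{\bar i,\bar j}(z)$) carries the extra factor $(z-q^{2n-1})$ (resp.\ $(z-q^{n+1})$) not visible in the folded distance polynomial, and one must check that $X(\alpha_j)/X(\alpha_i)$ never coincides with this spurious zero; equivalently, one must confirm that the folded coordinate gap $|p_{\alpha_i}-p_{\alpha_j}|$ appearing between simple roots in $\widehat{\Upsilon}_{[\mQ]}$ avoids this critical exponent and matches the actual order $t$ at which $\widehat{D}_{\bar i,\bar j}(z)$ picks up its contribution from the pair $(\alpha_i,\alpha_j)$. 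This is a structural statement about the placement of the simple roots inside $\widehat{\Upsilon}_{[\mQ]}$ that should be extractable from the explicit description of $\widehat{\Upsilon}_{[\mQ]}$ in Section 2 and from the foldable Coxeter composition, handled uniformly across all $[\mQ]\in\lf\mQ\rf$ by Proposition \ref{prop: DQ DQ'}.
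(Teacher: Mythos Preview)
Your proposal is correct and follows essentially the same approach as the paper: both construct $\F_\mQ$ via Theorem~\ref{thm:gQASW duality} by taking $J=\mathcal{S}=\Pi$, defining $s$ and $X$ from the folded coordinates exactly as you do, and then verifying that the Schur--Weyl quiver $\Gamma^\Xi$ has underlying graph the Dynkin diagram of $A_{2n-1}$ (resp.\ $D_{n+1}$) by combining the dichotomy $\dist_{[\mQ]}(\al_i,\al_j)\in\{0,1\}$, the identity $d_{k,l}=\widehat{D}_{k,l}\cdot(z-q^{h^\vee})^{\delta_{k,l}}$ from Theorem~\ref{thm: den dist}, and the fact that all zeros of $d^{B_n^{(1)}}_{k,l}$ and $d^{C_n^{(1)}}_{k,l}$ are simple. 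Your discussion of the extra factor $(z-q^{2n-1})$ when $\bar i=\bar j$ is more explicit than the paper's terse bullet list, but it is the same verification; the paper simply records the three ingredients and leaves the assembly to the reader.
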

\begin{proof}
(1) For the construction of a functor, we need to take a Schur-Weyl datum $\Xi$.
(i) Take $J$ and $\mathcal{S}$ as the set of simple roots $\Pi$ of $\PR_{A_{2n-1}}$.
(ii) Define two maps $$ s: \Pi \to \{ V(\varpi_i) \ | \ i \in I_0 \} \quad \text{ and } \quad X: \Pi \to  \ko^\times $$ as follows$\colon$
For $\al \in \Pi$ with $\widehat{\Omega}(\al)=(i,p)$, we define
$$ s(\al)= V(\varpi_i) \ \text{ and }   \ X(\al)= (-1)^iq_s^p.$$
Then we can conclude that the underlying graph of Schur-Weyl quiver $\Gamma^{\Xi}$ coincides with the Dynkin diagram $\Delta$ of $A_{2n-1}$, since
\begin{itemize}
\item $\dist_{[\mQ]}(\al_{i},\al_{j}) =\begin{cases} 1 & \text{ if $i$ and $j$ are adjacent in $\Delta$,} \\ 0 & \text{ otherwise}, \end{cases}$
\item $d^{B_{n}^{(1)}}_{k,l}(z)= \widehat{D}_{k,l}(z) \times (z-q^{2n-1})^{\delta_{k,l}}$ (Theorem \ref{thm: den dist}),
\item $d^{B_{n}^{(1)}}_{k,l}(z)$ has only roots of order $1$.
\end{itemize}
Thus our assertion follows from {\rm (b)} of Theorem \ref{thm:gQASW duality}.

(2) The assertion can be proved by the same argument of (1) with the two maps $s$ and $X$ given as follows$\colon$
For $\al \in \Pi$ with $\widehat{\Omega}(\al)=(i,p)$, we define
$$ s(\al)= V(\varpi_i) \ \text{ and }   \ X(\al)= (-q_s)^p.$$
\end{proof}

\begin{theorem} \label{thm: FQ2 SQ}
For any $[\mQ]$ of $\lf \mQ \rf$
and $\ga \in \PR_{A_{2n-1}}$ $($resp.\ $\ga \in \PR_{D_{n+1}})$, we have
$$\F_\mQ(S_\mQ(\ga)) \simeq V_\mQ(\ga).$$
\end{theorem}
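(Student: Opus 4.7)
The plan is to prove the identification by induction on the height $\het(\gamma)$. The base case, $\gamma = \al_i$ simple, will be immediate from the construction of the Schur--Weyl datum $\Xi$ in the proof of Theorem~\ref{thm: exact functor}: the cuspidal module $S_\mQ(\al_i)$ is the one-dimensional simple $R(\al_i)$-module $L(i)$, and by design $\F_\mQ(L(i)) = V(\varpi_i)_{X(\al_i)}$ where $X(\al_i) = (-1)^i q_s^p$ (type $B$) or $(-q_s)^p$ (type $C$) with $\widehat{\Omega}_{[\mQ]}(\al_i) = (i,p)$, matching $V_\mQ(\al_i)$ by Definition~\ref{def: VmQ(beta)}.

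For the inductive step, I would fix $\gamma$ with $\het(\gamma) \ge 2$. By Proposition~\ref{pro: BKM minimal}, $\gamma$ admits a $[\mQ]$-minimal sequence which is a pair $(\al,\beta)$ with $\al + \beta = \gamma$; without loss of generality take $\al \prec_{[\mQ]} \beta$, so that both $\al$ and $\beta$ have height strictly less than $\het(\gamma)$. Invoking the standard cuspidal-module property underlying the PBW categorification \cite{BKM12,Kato12,Mc12}, $S_\mQ(\gamma)$ arises as the simple head of $S_\mQ(\beta) \conv S_\mQ(\al)$. Applying the exact tensor functor $\F_\mQ$ together with the inductive hypothesis then produces a surjection
\[
V_\mQ(\beta) \tens V_\mQ(\al) \twoheadrightarrow \F_\mQ(S_\mQ(\gamma)).
\]

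Next, I would identify the right-hand side with $V_\mQ(\gamma)$ using input from the quantum affine side. Dorey's rule (Theorem~\ref{thm: Dorey}) applied to the minimal pair $(\al,\beta)$ gives a nonzero morphism $V_\mQ(\beta) \tens V_\mQ(\al) \to V_\mQ(\gamma)$, exhibiting $V_\mQ(\gamma)$ as a simple quotient. Moreover, because $(\al,\beta)$ is minimal, one has $\dist_{[\mQ]}(\al,\beta) \le 1$, and Theorem~\ref{thm: den dist} together with the definition of $\widehat{D}_{\ov{k},\ov{l}}(z)$ shows that the relevant denominator $d^{B_n^{(1)}}_{k,l}$ (resp.\ $d^{C_n^{(1)}}_{k,l}$) vanishes to order exactly one at the corresponding spectral parameter. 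Consequently $V_\mQ(\beta)\tens V_\mQ(\al)$ is of composition length two with unique simple head $V_\mQ(\gamma)$. The surjection above then forces $\F_\mQ(S_\mQ(\gamma))$ to be either $V_\mQ(\gamma)$ itself or the entire length-two tensor product; the second option will be ruled out by observing that $S_\mQ(\gamma)$ is simple and tracking its class under the ring homomorphism on Grothendieck groups induced by $\F_\mQ$, which must coincide with the PBW basis element corresponding to $\gamma$.

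The main obstacle I anticipate is precisely this final identification: carrying the simplicity of $S_\mQ(\gamma)$ through $\F_\mQ$ to exclude the full length-two tensor product as a possibility. The building blocks are in place (Dorey's rule for existence of the head, the simple-zero count from Theorem~\ref{thm: den dist} for composition length, and exactness and tensoriality of $\F_\mQ$), but converting them into the clean statement that the image of $S_\mQ(\gamma)$ is exactly the unique simple head requires a careful weight-space or Grothendieck-ring comparison that is not formally automatic from the functorial properties alone. Once this bookkeeping is settled, the induction closes uniformly for both types $B_n^{(1)}$ and $C_n^{(1)}$.
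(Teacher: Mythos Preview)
Your inductive strategy matches the paper's, and you have correctly located the one nontrivial step. However, your proposed fix for that step does not work, and the paper resolves it by a different mechanism.

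The obstacle is real: from the surjection $V_\mQ(\beta)\tens V_\mQ(\al)\twoheadrightarrow \F_\mQ(S_\mQ(\gamma))$ alone you cannot exclude that the image is zero or the whole length-two module. Your suggestion to compare classes in the Grothendieck ring is circular here, because at this point in the argument you have no independent description of what $[\F_\mQ]$ does to the dual PBW element $[S_\mQ(\gamma)]$; that identification (Corollary~\ref{cor: PBW upper global} and the isomorphism \eqref{eq: isomQ}) is proved \emph{after} and \emph{using} the present theorem via Theorem~\ref{thm: simples to simples}.

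The paper instead exploits McNamara's full four-term exact sequence
\[
0 \to S_\mQ(\gamma) \to S_\mQ(\al)\conv S_\mQ(\beta) \xrightarrow{r} S_\mQ(\beta)\conv S_\mQ(\al) \xrightarrow{s} S_\mQ(\gamma)\to 0,
\]
not just the head surjection. Applying $\F_\mQ$ and the induction hypothesis, $\F_\mQ(S_\mQ(\gamma))$ is realized as the image of the composite $\F_\mQ(t\circ s)\colon V_\mQ(\beta)\tens V_\mQ(\al)\to V_\mQ(\al)\tens V_\mQ(\beta)$. Nonvanishing is then argued by contradiction: if $\F_\mQ(S_\mQ(\gamma))=0$ then $\F_\mQ(r)$ is an isomorphism $V_\mQ(\al)\tens V_\mQ(\beta)\simeq V_\mQ(\beta)\tens V_\mQ(\al)$, forcing simplicity of the tensor product by \cite[Corollary~3.16]{KKKO14S}, which contradicts Dorey's rule (Theorem~\ref{thm: Dorey}). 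Once the map is nonzero, its image is automatically simple by the real-module machinery of \cite{KKKO14S} (fundamental modules are real), and hence equals the simple head $V_\mQ(\gamma)$. No composition-length count or Grothendieck-ring comparison is needed.
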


\begin{proof}
We shall prove our assertion by an induction on $\het(\ga)$. For
$\ga$ with $\het(\ga)=1$, our assertion follows from \cite[Proposition 3.2.2]{KKK13A} .
Now we assume that $\het(\ga) \ge 2$. Note that there exists a minimal pair $(\al,\beta)$ of
$\ga$. By \cite[Theorem 3.1]{Mc12}, we have a six-term exact
sequence of $R(\ga)$-modules
$$ 0 \To[t] S_{\mQ}(\ga)
\Lto S_{\mQ}(\al) \conv S_{\mQ}(\beta) \overset{r}{\Lto}
S_{\mQ}(\beta) \conv S_{\mQ}( \al ) \overset{s}{\Lto}
S_{\mQ}(\ga) \to 0.$$ Applying the functor $\F_\mQ$,
 we have an exact sequence of $U_q'(\g)$-modules by the induction
hypothesis
$$ 0 \To[\F_\mQ(t)] \F_\mQ(S_{\mQ}(\ga))
\to V_\mQ(\al) \tens V_\mQ(\beta) \overset{\F_\mQ(r)}{\Lto}
V_\mQ(\beta) \tens V_\mQ(\al) \overset{\F_\mQ(s)}{\Lto} \F_\mQ(S_{\mQ}(\ga)) \to 0.$$
On the other hand, Theorem \ref{thm: Dorey} tells that $V_{\mQ}(\beta) \tens V_{\mQ}(\al)$ is not simple.

We have then
$\F_\mQ(S_{\mQ}(\ga)) \ne 0$. Indeed, if it vanished, we would have
$$V_{\mQ}(\al) \tens V_{\mQ}(\beta) \simeq V_{\mQ}(\beta)
\tens V_\mQ(\al),$$
which implies that $V_\mQ(\al) \tens V_{\mQ}(\beta)$ is simple
by \cite[Corollary 3.16]{KKKO14S}.

Hence $\F_\mQ(S_{\mQ}(\ga))$ is the image of
a non-zero homomorphism
$$\F_\mQ(ts)\colon V_\mQ(\beta) \tens V_\mQ(\al)\to
 V_\mQ(\al) \tens V_\mQ(\beta).$$
Thus \cite{KKKO14S} and the quantum affine version of
 \cite[Proposition 4.5]{KKKO15B} imply that $\F_\mQ(S_{\mQ}(\beta))$ is the simple head of
$V_\mQ(\beta) \tens V_\mQ(\al)$ which coincides with $V_\mQ(\ga)$.
\end{proof}

\begin{lemma} \label{lem: k<l no poles} Let $\beta,\gamma\in \PR$.
If $ \Rnorm_{V_\mQ(\al),V_\mQ(\beta)}(z)$ has a pole at $z=1$, then
$\beta\prec_{[\mQ]} \al$.
\end{lemma}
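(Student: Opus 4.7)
The plan is to convert the analytic statement about the pole of $R^{\mathrm{norm}}$ into a statement about the folded distance polynomial, and then to read off the convex order from the positions in $\WUp_{[\mQ]}$.

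First I would unpack the pole. By the defining property \eqref{definition: dm,n} of the denominator, a pole of $R^{\mathrm{norm}}_{V_\mQ(\al),V_\mQ(\beta)}(z)$ at $z=1$ is equivalent to $d_{i,j}$ vanishing at the ratio of the spectral parameters of $V_\mQ(\beta)$ over $V_\mQ(\al)$, where $\widehat{\Omega}_{[\mQ]}(\al)=(i,p)$ and $\widehat{\Omega}_{[\mQ]}(\beta)=(j,q)$. Using Definition \ref{def: VmQ(beta)}, this ratio equals $(-1)^{i+j}q_s^{q-p}$ in type $B$ and $(-q_s)^{q-p}$ in type $C$, so the hypothesis becomes
\[
d_{i,j}\bigl((-1)^{i+j}q_s^{q-p}\bigr)=0 \quad \text{(type $B$)}, \qquad d_{i,j}\bigl((-q_s)^{q-p}\bigr)=0 \quad \text{(type $C$).}
\]

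Next I would pin down $q-p$ by combining Theorem \ref{thm: den dist} with Definition \ref{def: Dist poly Q}. The factorisation $d^{B_n^{(1)}}_{i,j}(z)=\widehat{D}_{i,j}(z)(z-q^{2n-1})^{\delta_{i,j}}$ (resp.\ the type $C$ analogue) together with the explicit form of $\widehat{D}_{i,j}$ shows that the vanishing occurs precisely when either $q-p=t$ for some $t\in\Z_{\ge 0}$ with $\mathtt{o}^{[\mQ]}_t(i,j)\ge 1$, or the extra affine factor contributes, in which case $i=j$ together with $q-p=4n-2$ in type $B$ or $q-p=2(n+1)$ in type $C$. Since $\widehat{\Omega}_{[\mQ]}$ is injective and $\al\neq\beta$, the case $t=0$ is excluded, so in every scenario $q>p$.

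Finally I would read off $\beta\prec_{[\mQ]}\al$ from $q>p$. The folded AR-quiver $\WUp_{[\mQ]}$ realises $\prec_{[\mQ]}$ combinatorially: its arrows always point towards larger $p$-coordinate and reading residues compatibly with these arrows recovers every $\redez\in[\mQ]$. Hence whenever two positive roots are $\prec_{[\mQ]}$-comparable and the first sits at a strictly smaller $p$-coordinate, the second is $\prec_{[\mQ]}$-smaller. For $(\al,\beta)$, the positivity $\mathtt{o}^{[\mQ]}_{q-p}(i,j)\ge 1$ produces at least one witness pair in $\Phi_{[\mQ]}(i,j)[q-p]$, and the positional characterisation of $\Phi_{[\mQ]}(\overline{k},\overline{l})[t]$ in \cite{OS16B,OS16C} shows that $\prec_{[\mQ]}$-comparability depends only on the residue pair and the $p$-gap; consequently $(\al,\beta)$ itself lies in $\Phi_{[\mQ]}(i,j)[q-p]$, and $q>p$ fixes the orientation as $\beta\prec_{[\mQ]}\al$. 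The affine boundary case $i=j$ is handled separately: $\al$ and $\beta$ lie on the same horizontal line of $\WUp_{[\mQ]}$, on which the restriction of $\prec_{[\mQ]}$ is a total order with larger $p$-coordinate smaller; again $\beta\prec_{[\mQ]}\al$.

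The hard part will be Step~3: extracting the precise statement that membership in $\Phi_{[\mQ]}(i,j)[t]$, and hence $\prec_{[\mQ]}$-comparability, is dictated entirely by the residues $(i,j)$ and the gap $|q-p|$ and not by the specific positive roots carrying those folded coordinates. This is the only nontrivial invocation of the combinatorial machinery of \cite{OS16B,OS16C}; Steps~1 and~2 are direct computations with the denominator factorisation.
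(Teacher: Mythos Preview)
Your proposal fleshes out what the paper leaves as a citation: the paper's entire proof is the sentence ``almost identical with \cite[Lemma 4.2]{KKKOIV}'', and your argument is precisely that lemma's proof transported to the folded setting---translate the pole into a root of $d_{i,j}$, invoke Theorem~\ref{thm: den dist} to match it against $\widehat D_{i,j}$, then read comparability off the combinatorics of $\WUp_{[\mQ]}$. So the approach is the same.

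One correction. Your treatment of the extra factor $(z-q^{2n-1})^{\delta_{i,j}}$ (resp.\ $(z-q^{n+1})^{\delta_{i,j}}$) reaches the right conclusion for the wrong reason. In the \emph{folded} quiver the vertices on a fixed residue $\bar\imath$ are \emph{not} totally ordered by $\prec_{[\mQ]}$: e.g.\ in the paper's $A_5$ example the vertices at $(\bar 1,-6)$ and $(\bar 1,-4)$ come from residues $5$ and $1$ of the unfolded $\Upsilon_{[\mQ]}$ and are incomparable. The correct disposal of this case is that it is vacuous: on each residue the $2n-1$ (resp.\ $n+1$) vertices have $p$-coordinates in arithmetic progression with step $2$, so the maximal gap is $4n-4$ (resp.\ $2n$), strictly smaller than the extra-factor gap $4n-2$ (resp.\ $2n+2$). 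Hence no pair of positive roots in $\WUp_{[\mQ]}$ ever witnesses that root, and only the $\widehat D_{i,j}$ factor matters.

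Your Step~3 is indeed the substance. The precise statement you are invoking from \cite{OS16B,OS16C} is that $\WUp_{[\mQ]}$ sits as a convex full subquiver inside a translation-invariant ambient (folded repetition) quiver, so that the existence of a path from $(j,q)$ to $(i,p)$ depends only on $(i,j,q-p)$; once you have that, $\mathtt{o}^{[\mQ]}_{q-p}(i,j)\ge 1$ forces $(\al,\beta)\in\Phi_{[\mQ]}(i,j)[q-p]$ and $q>p$ fixes the orientation as $\beta\prec_{[\mQ]}\al$.
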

\begin{proof}
The proof is almost identical with \cite[Lemma 4.2]{KKKOIV}.
\end{proof}

\begin{theorem} \label{thm: simples to simples}
The functor $\F_\mQ$ sends a simple module to a
simple module.
Moreover, the functor $\F_\mQ$ induces a bijection from
the set of simple modules in $\Rep(R)$ to
the set of simple modules in $\mC_\mQ$.
\end{theorem}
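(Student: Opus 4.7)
The plan is to reduce the simple-to-simple claim to the PBW-basis identification of Theorem \ref{thm: FQ2 SQ}, combined with a pole analysis of normalized R-matrices controlled by the convex order $\prec_{[\mQ]}$ via Lemma \ref{lem: k<l no poles}. Let $L(\um)$ denote the simple head of
\[S_{[\mQ]}(\um) = S_{[\mQ]}(\beta_1)^{\conv m_1} \conv \cdots \conv S_{[\mQ]}(\beta_\N)^{\conv m_\N},\]
indexed along the convex total order $\beta_1 \prec_{[\mQ]} \cdots \prec_{[\mQ]} \beta_\N$. By the categorification of the dual PBW-basis, $\{L(\um)\}_\um$ is a complete set of simples in $\Rep(R_\fg)$, and the uni-triangular expansion $[S_{[\mQ]}(\um)] = [L(\um)] + \sum_{\um'<^\tb_{[\mQ]}\um} c^{\um}_{\um'}\,[L(\um')]$ holds. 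By Theorem \ref{thm: FQ2 SQ} and the tensor-functor property, $V_\mQ(\um) := \bigotimes_k V_\mQ(\beta_k)^{\otimes m_k} \simeq \F_\mQ(S_{[\mQ]}(\um))$; the task is then to identify $\F_\mQ(L(\um))$.

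I would proceed by induction on $\um$ with respect to $<^\tb_{[\mQ]}$. In the base case $\um$ is $[\mQ]$-simple, so $S_{[\mQ]}(\um) = L(\um)$ is already simple; Lemma \ref{lem: k<l no poles} applied to the minimal constituents of $\um$ (which are pairwise commuting in the sense that both $\Rnorm_{V_\mQ(\beta_i),V_\mQ(\beta_j)}(z)$ and $\Rnorm_{V_\mQ(\beta_j),V_\mQ(\beta_i)}(z)$ are regular at $z = 1$) forces $V_\mQ(\um)$ itself to be simple, so $\F_\mQ(L(\um)) = V_\mQ(\um)$. For the inductive step, Lemma \ref{lem: k<l no poles} together with the Kang--Kashiwara--Kim--Oh simple-head criterion for ordered sequences of real simple modules whose pairwise R-matrices have no pole at $z = 1$ supplies a unique simple head $H(\um)$ of $V_\mQ(\um)$. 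The epimorphism $V_\mQ(\um) \twoheadrightarrow \F_\mQ(L(\um))$ then shows $\F_\mQ(L(\um)) \neq 0$; applying $\F_\mQ$ to the short exact sequence $0 \to K \to S_{[\mQ]}(\um) \to L(\um) \to 0$ and invoking the inductive hypothesis on composition factors of $K$ (which are $L(\um')$ with $\um'<^\tb \um$) yields
\[[\F_\mQ(L(\um))] = [V_\mQ(\um)] - \sum_{\um'<^\tb\um} c^{\um}_{\um'}\,[H(\um')]\]
in $[\mC_\mQ]$. Comparing this with the PBW-triangular expansion of $[V_\mQ(\um)]$ in the family $\{[H(\cdot)]\}$ forces $[\F_\mQ(L(\um))] = [H(\um)]$, whence $\F_\mQ(L(\um)) \simeq H(\um)$ is simple.

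The bijection on isomorphism classes is then automatic. Surjectivity: any simple $V \in \mC_\mQ$ is, by the very definition of $\mC_\mQ$, a composition factor of some $V_\mQ(\um) = \F_\mQ(S_{[\mQ]}(\um))$, hence equals $\F_\mQ(L(\um'))$ for some composition factor $L(\um')$ of $S_{[\mQ]}(\um)$. Injectivity follows from the induction above, which identifies the $\F_\mQ(L(\um))$ with the pairwise distinct simples $H(\um)$ of $\mC_\mQ$.

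The principal obstacle is the existence and uniqueness of the simple head $H(\um)$: Lemma \ref{lem: k<l no poles} supplies only pairwise regularity at $z = 1$, whereas the whole-sequence statement requires the full Kang--Kashiwara--Kim--Oh head criterion. This in turn presupposes that each fundamental representation $V_\mQ(\beta)$ is real and that the spectral normalization of Definition \ref{def: VmQ(beta)} is compatible with the convex order $\prec_{[\mQ]}$ used in Lemma \ref{lem: k<l no poles}---both points that are implicit in the denominator-formula analysis of Theorem \ref{thm: den dist} but must be marshaled explicitly here. Once the head criterion is secured in our setting, the remainder of the argument reduces to routine inductive bookkeeping via PBW-triangularity and Theorem \ref{thm: FQ2 SQ}.
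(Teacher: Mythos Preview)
Your outline is the argument of \cite[Theorem~4.5]{KKKOIV} that the paper's one-line proof invokes, and the inputs you marshal---Theorem~\ref{thm: FQ2 SQ}, Lemma~\ref{lem: k<l no poles}, the simple-head criterion, PBW-triangularity---are the right ones. Two points need repair.

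Your base case is misargued. For a $[\mQ]$-simple sequence $\um$ whose support contains comparable roots $\beta_i\prec_{[\mQ]}\beta_j$, Lemma~\ref{lem: k<l no poles} rules out a pole of $\Rnorm_{V_\mQ(\beta_i),V_\mQ(\beta_j)}$ at $z=1$ but says nothing about $\Rnorm_{V_\mQ(\beta_j),V_\mQ(\beta_i)}$; you cannot conclude pairwise commutativity, hence simplicity of $V_\mQ(\um)$, from that lemma alone. (That $V_\mQ(\um)$ \emph{is} simple for $[\mQ]$-simple $\um$ is Corollary~\ref{cor: simple simple}, proved later via the present theorem, so invoking it here would be circular.) The correct base for induction on $<^\tb_{[\mQ]}$ is $\um$ concentrated at a single root, where $V_\mQ(\um)=V_\mQ(\beta)^{\otimes m}$ is simple because $V_\mQ(\beta)$ is real; the inductive step then covers all other $\um$ uniformly, so this misstep is not fatal.

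The comparison ``with the PBW-triangular expansion of $[V_\mQ(\um)]$ in the family $\{[H(\cdot)]\}$'' and your injectivity claim both presuppose that the heads $H(\um)$ are pairwise non-isomorphic, which you assert but do not justify. In the \cite{KKKOIV} argument this comes from Theorem~\ref{Thm: basic properties}(3): the simple head of $V_\mQ(\um)$ is determined by its multiset of fundamental constituents, namely $\{\widehat{\Omega}_{[\mQ]}(\beta_k)\}$ with multiplicities $m_k$, so distinct $\um$ yield distinct heads. With these two fixes your argument goes through and coincides with the paper's (deferred) proof.
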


\begin{proof}
With Theorem \ref{thm: den dist} and Theorem \ref{thm: Dorey}, we can apply the same argument of \cite[Theorem 4.5]{KKKOIV}.
\end{proof}

Thus, for any $[Q]$, $[Q']$ and $[\mQ]$, we have the following diagrams$\colon$ 
\begin{align} \label{eq: dias}
\ba{c}\scalebox{0.9}{\xymatrix@C=.5ex@R=5ex{ &&\hs{7ex}
\mC_\mQ \subset \mathcal{C}_{B^{(1)}_n}\ar@{<.>}[ddrr]\ar@{<.>}[ddll]\\
&& \Rep(R_{A_{2n-1}}) \ar[u]^-{\F_{\mQ}}\ar[drr]_-{\F^{(2)}_{Q'}}\ar[dll]^-{\F^{(1)}_{Q}} \\
\mathcal{C}_{A^{(1)}_{2n-1}} \hspace{-2ex}  \supset \mathcal{C}^{(1)}_{Q} \ar@{<.>}[rrrr] && && \hs{2ex}\mathcal{C}^{(2)}_{Q'} \hspace{-0.8ex}
\subset \hspace{-0.4ex}  \mathcal{C}_{A^{(2)}_{2n-1}},}
}
\scalebox{0.9}{\xymatrix@C=.5ex@R=5ex{ && \hs{7ex}
\mC_\mQ\subset \mathcal{C}_{C^{(1)}_n} \ar@{<.>}[ddrr]\ar@{<.>}[ddll]  \\
&&  \Rep(R_{D_{n+1}}) \ar[u]^-{\F_{\mQ}}\ar[drr]_-{\F^{(2)}_{Q'}}\ar[dll]^-{\F^{(1)}_{Q}} \\
\mathcal{C}_{D^{(1)}_{n+1}} \hspace{-1.6ex} \supset \mathcal{C}^{(1)}_{Q} \ar@{<.>}[rrrr] && &&
\hs{2ex}\mathcal{C}^{(2)}_{Q'} \hspace{-0.8ex}  \subset \hspace{-0.4ex}  \mathcal{C}_{D^{(2)}_{n+1}}}
}\ea
\end{align}

\begin{corollary} Let $\fg=A_{2n-1}$ or $D_{n+1}$.
For any $[\mQ] \in \lf \mQ \rf$,
there exists an isomorphism between $[\mC_\mQ]$ and $U^-_\A(\fg)^\vee|_{q=1}$ induced by $\F_\mQ$$\colon$
\begin{align} \label{eq: isomQ}
[\F_\mQ] : [\mC_\mQ] \simeq U^-_\A(\fg)^\vee|_{q=1}.
\end{align}
Hence, we have isomorphisms which refine \eqref{eq:HLU}$\colon$
\begin{align} \label{eq: dias2}
\raisebox{2.8em}{\xymatrix@C=6ex@R=4ex{ & [\mC_\mQ] \ar@{<->}[ddr]^{\simeq}\ar@{<->}[ddl]_{\simeq}  \\
&  U^-_\A(\fg)^\vee|_{q=1} \ar@{<->}[u]^{\simeq}\ar@{<->}[dr]_{\simeq}\ar@{<->}[dl]^{\simeq} \\
[\mathcal{C}^{(1)}_{Q}] \ar@{<->}[rr]_{\simeq} &&  [\mathcal{C}^{(2)}_{Q'}]}}
\end{align}
\end{corollary}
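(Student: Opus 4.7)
The plan is to produce \eqref{eq: isomQ} directly from $\F_\mQ$ and then derive \eqref{eq: dias2} as a formal consequence. First, since $\F_\mQ\colon\Rep(R_\fg)\to\mC_\mQ$ is exact by Theorem \ref{thm: exact functor} and is a tensor functor by Theorem \ref{thm:gQASW duality}(1), it descends to a ring homomorphism on Grothendieck rings. Because $\F_\mQ$ forgets the grading of $R_\fg$-modules, this homomorphism factors through the $q=1$ specialization, so that combined with the categorification isomorphism $[\Rep(R_\fg)]\simeq U^-_\A(\fg)^\vee$ of Theorem \ref{Thm:categorification} we obtain the ring homomorphism
\[
[\F_\mQ]\colon U^-_\A(\fg)^\vee|_{q=1}\;\longrightarrow\;[\mC_\mQ].
\]

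Next I would show that $[\F_\mQ]$ is a $\Z$-module isomorphism by comparing natural bases on the two sides. The group $[\Rep(R_\fg)]|_{q=1}$ is free over $\Z$ on the isomorphism classes of self-dual simple $R_\fg$-modules, while $[\mC_\mQ]$ is free over $\Z$ on the isomorphism classes of simple objects of $\mC_\mQ$. Theorem \ref{thm: simples to simples} asserts precisely that $\F_\mQ$ induces a bijection between these two sets of simples. Hence $[\F_\mQ]$ sends a $\Z$-basis to a $\Z$-basis, and since it is already a ring homomorphism, it yields the desired isomorphism \eqref{eq: isomQ}.

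The refined diagram \eqref{eq: dias2} then comes for free. Its bottom edge $[\mathcal{C}^{(1)}_Q]\simeq U^-_\A(\fg)^\vee|_{q=1}\simeq[\mathcal{C}^{(2)}_{Q'}]$ is exactly \eqref{eq:HLU} from Theorem \ref{thm: HLU}, while the two slanted isomorphisms involving $[\mC_\mQ]$ are instances of \eqref{eq: isomQ}. Commutativity is automatic: each edge is the Grothendieck-group shadow of a Schur-Weyl functor out of $\Rep(R_\fg)$ sending a simple $S_Q(\beta)$ or $S_\mQ(\beta)$ to the corresponding $V^{(1)}_Q(\beta)$, $V^{(2)}_{Q'}(\beta)$, or $V_\mQ(\beta)$ (Theorem \ref{thm: FQ2 SQ} and Theorem \ref{thm:gQASW duality 2}), all of which correspond to the same element of the specialized upper global basis.

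The only delicate point to watch, and what I would flag as the main obstacle, is the compatibility of $\F_\mQ$ with the specialization $q\mapsto 1$ on the \KLR side, together with the requirement that the image of a simple $R_\fg$-module be simple in $\mC_\mQ$ rather than merely in the ambient $\mathcal{C}_{\g^{(1)}}$. Both are already granted by Theorem \ref{thm:gQASW duality} and Theorem \ref{thm: simples to simples} respectively, so once these are invoked, the present corollary reduces to a bookkeeping consequence of the preceding theorems.
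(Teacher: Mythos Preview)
Your argument is correct and is precisely the reasoning the paper has in mind: the corollary is stated without proof, as the isomorphism \eqref{eq: isomQ} follows immediately from Theorem~\ref{thm: simples to simples} together with the exactness and tensor structure of $\F_\mQ$, and the diagram \eqref{eq: dias2} is then obtained by combining this with \eqref{eq:HLU}. Your write-up simply makes explicit the bookkeeping that the paper leaves implicit.
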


Now we have a $[\mQ]$-analogue of Theorem \ref{thm: HLB}:

\begin{corollary} \label{cor: PBW upper global}
A dual PBW-basis associated to $[\mQ]$ and the upper global basis of
$U^-_{\A}(\fg)^{\vee}$
are categorified by the modules over $U_q'(\g)$ in the following sense$\colon$
\begin{enumerate}[{\rm (1)}]
\item The set of all simple modules in $\mC_\mQ$ corresponds to
the upper global basis of $U^-_{\A}(\fg)^{\vee}|_{q=1}$.
\item The set $\{ \VtomQm  \ | \ \um \in \Z_{\ge 0}^\N\}$ corresponds to
the dual PBW-basis under the isomorphism in \eqref{eq: isomQ}.
Here $\VtomQm \seteq  V_{\mQ}(\beta_1)^{\conv m_1}\conv\cdots \conv V_{\mQ}(\beta_\N)^{\conv m_\N}$.
\end{enumerate}
\end{corollary}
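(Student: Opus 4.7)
The plan is to deduce the corollary directly from the structural results already established, namely Theorem~\ref{thm:categorification 2}, the categorification theorem for dual PBW bases via $\Stom$, Theorem~\ref{thm: exact functor}, Theorem~\ref{thm: FQ2 SQ}, and Theorem~\ref{thm: simples to simples}, together with the monoidal property of the Schur--Weyl functor recalled in Theorem~\ref{thm:gQASW duality}. The whole argument is a transport along $\F_\mQ$.

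For part (1), I would first recall that since $R_\fg$ is a symmetric quiver Hecke algebra associated to the finite-type simply-laced Cartan datum $A_{2n-1}$ or $D_{n+1}$ (and $\ko$ has characteristic zero, as fixed in Section~\ref{subsec: SW datum}), Theorem~\ref{thm:categorification 2} identifies the upper global basis of $U^-_\A(\fg)^\vee$ with the classes of self-dual simple $R_\fg$-modules. Next, Theorem~\ref{thm: simples to simples} asserts that $\F_\mQ$ induces a bijection between simple objects of $\Rep(R_\fg)$ and simple objects of $\mC_\mQ$. Composing these bijections, and using the ring isomorphism $[\F_\mQ]\colon [\mC_\mQ]\isoto U^-_\A(\fg)^\vee|_{q=1}$ from \eqref{eq: isomQ}, yields the desired correspondence between the simple classes of $\mC_\mQ$ and the (specialization at $q=1$ of the) upper global basis.

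For part (2), I would use that $\F_\mQ$ is a tensor functor by Theorem~\ref{thm:gQASW duality}(1), so for any $\um\in\Z_{\ge 0}^\N$ there is an isomorphism
\begin{equation*}
\F_\mQ(\Stom)\;\simeq\; \F_\mQ(S_\mQ(\beta_1))^{\otimes m_1}\otimes\cdots\otimes\F_\mQ(S_\mQ(\beta_\N))^{\otimes m_\N}.
\end{equation*}
By Theorem~\ref{thm: FQ2 SQ} each factor $\F_\mQ(S_\mQ(\beta_k))$ is isomorphic to $V_\mQ(\beta_k)$, hence $\F_\mQ(\Stom)\simeq \VtomQm$. Since $\{\Stom\mid\um\in\Z_{\ge 0}^\N\}$ categorifies the dual PBW basis associated to $[\mQ]$ (by the theorem on $\Stom$ in Section~4 of the excerpt), and $[\F_\mQ]$ is a ring isomorphism (which in particular respects convolution/tensor products in the Grothendieck groups), the classes $[\VtomQm]$ form the image of that dual PBW basis under \eqref{eq: isomQ}.

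I do not anticipate any real obstacle: every input is already in place, and the only things to check are bookkeeping, namely that the bijection of Theorem~\ref{thm: simples to simples} matches simples with self-dual simples after the standard grading-shift normalization (we work up to shifts throughout, as announced after Theorem~\ref{Thm:categorification}), and that the tensor-functor property from Theorem~\ref{thm:gQASW duality}(1) transports the convolution product on $\Rep(R_\fg)$ to the tensor product on $\mC_\mQ$ at the level of Grothendieck rings. The mildest subtlety is thus purely bookkeeping around grading shifts in passing from $\Stom$ to $\VtomQm$; this is identical to the treatment in the proof of Theorem~\ref{thm: HLB} in \cite{HL11,KKKOIV,Oh15E}, and the proof goes through verbatim in the present $[\mQ]$-setting once Theorems~\ref{thm: exact functor}, \ref{thm: FQ2 SQ}, and \ref{thm: simples to simples} are available.
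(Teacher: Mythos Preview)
Your proposal is correct and matches the paper's approach: the corollary is stated without a separate proof, as it follows immediately from transporting Theorem~\ref{thm:categorification 2} and the dual PBW categorification along the functor $\F_\mQ$, using that $\F_\mQ$ is a tensor functor (Theorem~\ref{thm:gQASW duality}), sends $S_\mQ(\beta)$ to $V_\mQ(\beta)$ (Theorem~\ref{thm: FQ2 SQ}), and induces a bijection on simples (Theorem~\ref{thm: simples to simples}).
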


\begin{corollary}
For any $[Q]$ and $[\mQ]$ of type $A_{2n-1}$ $($resp.\ $D_{n+1})$, the ring isomorphism
$$  \phi^{(\ii)}_{Q,\mQ} \seteq  \F_\mQ  \circ (\F_Q^{(\ii)})^{-1} : [\mathcal{C}^{(\ii)}_Q]  \isoto{}  [\mC_\mQ] \quad (\ii=1,2)$$
sends simples to simples, bijectively.
\end{corollary}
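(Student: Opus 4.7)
The plan is to deduce the statement formally from Theorems \ref{thm:gQASW duality 2} and \ref{thm: simples to simples} together with the Grothendieck-ring isomorphisms \eqref{eq: isomQ} and \eqref{eq:HLU}. First, by parts (2) and (3) of Theorem \ref{thm:gQASW duality 2}, the generalized quantum affine Schur--Weyl duality functor $\F^{(\ii)}_Q : \Rep(R_\fg) \to \mathcal{C}^{(\ii)}_Q$ is exact, induces a ring isomorphism $[\F^{(\ii)}_Q] : [\Rep(R_\fg)] \isoto[] [\mathcal{C}^{(\ii)}_Q]$, and restricts to a bijection between the isomorphism classes of simple objects of the two sides. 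Consequently, its inverse $[\F^{(\ii)}_Q]^{-1}$ is a well-defined ring isomorphism at the Grothendieck level which also sends the distinguished simple-module basis to the simple-module basis bijectively.

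Second, by Theorem \ref{thm: simples to simples} together with \eqref{eq: isomQ}, the functor $\F_\mQ : \Rep(R_\fg) \to \mC_\mQ$ has the analogous properties: it yields a ring isomorphism $[\F_\mQ] : [\Rep(R_\fg)] \isoto[] [\mC_\mQ]$ whose restriction to simples is a bijection onto the simples of $\mC_\mQ$. The composition
$$\phi^{(\ii)}_{Q,\mQ} \seteq [\F_\mQ] \circ [\F^{(\ii)}_Q]^{-1} : [\mathcal{C}^{(\ii)}_Q] \isoto[] [\mC_\mQ]$$
is therefore a composition of two ring isomorphisms each of which carries simples to simples bijectively, so it itself is a ring isomorphism carrying the set of isomorphism classes of simple objects of $\mathcal{C}^{(\ii)}_Q$ bijectively onto the set of isomorphism classes of simple objects of $\mC_\mQ$, as claimed.

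I do not anticipate any genuine obstacle at this last step: the corollary is a formal synthesis of results already established. All of the representation-theoretic content has already been absorbed into the proofs of the earlier statements, namely the coincidence of denominators $d^{B^{(1)}_n}_{k,l}(z)$ and $d^{C^{(1)}_n}_{k,l}(z)$ with the folded distance polynomials $\widehat{D}_{k,l}(z)$ (Theorem \ref{thm: den dist}), the Dorey-type correspondence with $[\mQ]$-minimal pairs (Theorem \ref{thm: Dorey}), and the identification $\F_\mQ(S_\mQ(\ga)) \simeq V_\mQ(\ga)$ of Theorem \ref{thm: FQ2 SQ}, together with the parallel results of \cite{KKK13B,KKKOIV} controlling $\F^{(\ii)}_Q$. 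Once those are in hand, the corollary requires nothing beyond the composition of two bijections.
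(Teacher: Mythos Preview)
Your proposal is correct and matches the paper's approach: the corollary is stated without proof in the paper, as it is an immediate formal consequence of composing the two simple-preserving bijections furnished by Theorem~\ref{thm:gQASW duality 2} and Theorem~\ref{thm: simples to simples}.
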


\begin{corollary}
For any $[Q]$ and $[\mQ]$ of type $A_{2n-1}$ $ ($resp.\ $D_{n+1})$,
the ring isomorphism $\phi^{(\ii)}_{Q,\mQ}$ sends $[V_{Q}(\al_i)]$ to
$[V_{\mQ}(\al_i)]$ for each simple root $\al_i$.
\end{corollary}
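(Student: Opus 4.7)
The plan is simply to unwind the definition of $\phi^{(\ii)}_{Q,\mQ}=\F_\mQ\circ(\F^{(\ii)}_Q)^{-1}$ on the class $[V_Q^{(\ii)}(\al_i)]$ by chaining the two categorifications. First, by Theorem~\ref{thm:gQASW duality 2}(2), the functor $\F^{(\ii)}_Q$ sends $S_Q(\al_i)=S_{[Q]}(\al_i)$ to $V^{(\ii)}_Q(\al_i)$, so
\[
(\F^{(\ii)}_Q)^{-1}\bigl([V^{(\ii)}_Q(\al_i)]\bigr)=[S_Q(\al_i)]
\]
in $[\Rep(R_\fg)]$. Second, by Theorem~\ref{thm: FQ2 SQ} applied to the twisted side, $\F_\mQ(S_\mQ(\al_i))\simeq V_\mQ(\al_i)$. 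To conclude, it suffices to check that $S_Q(\al_i)$ and $S_\mQ(\al_i)$ are the \emph{same} $R(\al_i)$-module.

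This is the one non-tautological point, but it is immediate: for a simple root $\al_i\in\Pi$, the algebra $R(\al_i)$ is a polynomial algebra in one variable and its category of finite-dimensional graded modules admits a unique simple object (up to grading shift), namely the one-dimensional module $L(i)$. Unwinding the construction of the $[\redez]$-simple head $S_{[\redez]}(\beta)$ at $\het(\beta)=1$, one sees that $S_{[\redez]}(\al_i)\simeq L(i)$ for \emph{every} commutation class $[\redez]$. In particular $S_Q(\al_i)\simeq S_\mQ(\al_i)$ canonically, and the choice of $Q$ or $\mQ$ plays no role at height one.

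Chaining the three observations yields
\[
\phi^{(\ii)}_{Q,\mQ}\bigl([V^{(\ii)}_Q(\al_i)]\bigr)
=\F_\mQ\bigl([S_Q(\al_i)]\bigr)
=\F_\mQ\bigl([S_\mQ(\al_i)]\bigr)
=[V_\mQ(\al_i)],
\]
which is the desired identity. There is no serious obstacle: the only substantive ingredient is the identification $S_Q(\al_i)=S_\mQ(\al_i)=L(i)$, which is a direct consequence of the height-one case of the categorification of the dual PBW basis and does not rely on any finer structural comparison between $[Q]$ and $[\mQ]$.
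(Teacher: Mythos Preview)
Your proof is correct and follows essentially the same approach as the paper. Both arguments hinge on the fact that $R(\al_i)$ has a unique simple module $L(i)$; the paper invokes \cite[Proposition 3.2.2]{KKK13A} directly to conclude that both functors send $L(i)$ to the correct fundamental module, whereas you route through Theorem~\ref{thm:gQASW duality 2}(2) and Theorem~\ref{thm: FQ2 SQ} (whose height-one base cases themselves rest on that same proposition), but the content is identical.
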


\begin{proof}
For any $i \in I$, the $1$-dimensional module $L(i)$
is the unique simple module over $R(\al_i)$. Thus our assertion follows from \cite[Proposition 3.2.2]{KKK13A}.
\end{proof}

Now we have a conjecture which can be understood as a Langlands analogue of \cite[Conjecture 4.7]{KKKOIV}$\colon$
\begin{conjecture} \label{conj}
The functor $\F_{\mQ} \colon \Rep(R) \to \mC_\mQ$ is an equivalence of categories.
\end{conjecture}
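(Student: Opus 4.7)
The plan is to follow the strategy developed in the Kang--Kashiwara--Kim--Oh series for proving equivalences between $\Rep(R_\fg)$ and subcategories of $\mathcal{C}_\g$, adapting it to the present Langlands dual setting. The groundwork is already in place: by Theorem \ref{thm: simples to simples}, $\F_\mQ$ induces a bijection on isomorphism classes of simple objects, and by the diagram \eqref{eq: dias2} together with Corollary \ref{cor: PBW upper global} it induces an isomorphism of Grothendieck rings. The task is to promote these to an honest equivalence of abelian categories, which amounts to verifying essential surjectivity, faithfulness, and fullness.

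First, I would address essential surjectivity. Every object of $\mC_\mQ$ has finite length, and every simple of $\mC_\mQ$ lies in the essential image of $\F_\mQ$ by Theorem \ref{thm: simples to simples}. An induction on composition length, using the exactness from Theorem \ref{thm: exact functor} together with a lifting of non-trivial extensions, then reduces essential surjectivity to a comparison of ${\rm Ext}^1$ groups. For full faithfulness, by devissage through composition series and repeated use of the five lemma it suffices to check, for any pair of simple modules $L, L' \in \Rep(R_\fg)$, that the natural maps
\[ {\rm Ext}^i_{R_\fg}(L, L') \longrightarrow {\rm Ext}^i_{U_q'({}^L\g^{(2)})}(\F_\mQ(L), \F_\mQ(L')) \qquad (i = 0,1) \]
are isomorphisms. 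The case $i=0$ is immediate from Schur's lemma on both sides combined with Theorem \ref{thm: simples to simples}. The case $i=1$ is the heart of the matter.

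The main obstacle will be controlling ${\rm Ext}^1$. On the quiver Hecke side these groups are determined by the combinatorics of $[\mQ]$-minimal pairs and the short exact sequences of Theorem~\ref{pro: BKM minimal}-type used in the proof of Theorem \ref{thm: FQ2 SQ}; on the quantum affine side they are controlled by the orders of the poles of the normalized R-matrices $\Rnorm_{V_\mQ(\alpha), V_\mQ(\beta)}(z)$, which in turn are encoded in the folded distance polynomial $\widehat{D}_{\ov{k},\ov{l}}(z)$ via Theorem \ref{thm: den dist}. The bridge between the two sides is precisely the multiplicity identity for $\mathtt{o}^{[\mQ]}_t(\ov{k},\ov{l})$, refined by the folded coordinates $\widehat{\Omega}_{[\mQ]}$. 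A clean route would be to prove a general criterion stating that an exact monoidal functor between two abelian monoidal categories of the present shape, inducing a bijection on simples and an isomorphism on Grothendieck rings, is automatically an equivalence provided both sides are generated under heads of convolution/tensor products by the images of the simple roots (cf.~Theorem \ref{Thm: basic properties}(3) on the quantum affine side and the PBW categorification on the quiver Hecke side). Executing this criterion uniformly across $\lf \mQ \rf$, while keeping track of the $q_s$ versus $q$ rescalings inherent in Definition \ref{def: VmQ(beta)} and the type-change from $A_{2n-1}, D_{n+1}$ to $B_n^{(1)}, C_n^{(1)}$, is where the real technical effort will lie.
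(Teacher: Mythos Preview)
The statement you are attempting to prove is labeled \emph{Conjecture} in the paper and is explicitly presented as an open problem, described as a Langlands analogue of \cite[Conjecture 4.7]{KKKOIV}, which is itself unresolved. The paper offers no proof; what you have written is a proof \emph{strategy} for an open conjecture, not a comparison target.

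As a strategy your outline is reasonable in shape but does not close the gap. The reductions you describe are standard: essential surjectivity via induction on length, full faithfulness via d\'evissage to ${\rm Ext}^i$ between simples for $i=0,1$, and the $i=0$ case by Schur. But the entire difficulty is concentrated in the ${\rm Ext}^1$ comparison, and nothing in the paper, nor in your proposal, supplies it. Theorem~\ref{thm: simples to simples} and the Grothendieck ring isomorphism give no control over non-split extensions. The ``general criterion'' you invoke, that an exact monoidal functor inducing a bijection on simples and an isomorphism on Grothendieck rings is automatically an equivalence, is not a known theorem and cannot be; there are easy counterexamples among abelian categories, and the monoidal structure alone does not rescue this without further input (e.g.\ rigidity plus a suitable generator argument, or a direct ${\rm Ext}^1$ computation). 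Your appeal to the folded distance polynomials via Theorem~\ref{thm: den dist} governs the \emph{poles of $R$-matrices}, hence the failure of simplicity of tensor products, but this is not the same as computing ${\rm Ext}^1_{U_q'(\g)}$ between arbitrary simples in $\mC_\mQ$. In short, the step you flag as ``the heart of the matter'' is exactly why the statement remains a conjecture.
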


\section{Simple head and socle} \label{sec: Simple head and socle}
In this section, we study the simple head and socle of $S_\mQ(\al) \conv S_\mQ(\beta)$ and
$V_\mQ(\al) \conv V_\mQ(\beta)$ which has been studied in many context (see \cite{KKKO14S,L03,Oh15E}).
\begin{theorem} \label{thm: simple simple}
A pair $\up=(\al,\beta)$
is $[\redez]$-simple if and only if $S_\mQ(\al) \conv S_\mQ(\beta)$ and
$V_\mQ(\al) \conv V_\mQ(\beta)$ are simple.
\end{theorem}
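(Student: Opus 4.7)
The plan is to establish the three-way equivalence by chaining the denominator/distance dictionary developed in the previous sections: $[\mQ]$-simplicity of $(\al,\beta)$ $\iff$ non-vanishing of the folded distance polynomial at the prescribed spectral parameter $\iff$ simplicity of $V_\mQ(\al)\tens V_\mQ(\beta)$ $\iff$ simplicity of $S_\mQ(\al)\conv S_\mQ(\beta)$.

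First I would collapse the two module-theoretic conditions into one. Theorem \ref{thm: FQ2 SQ} gives $\F_\mQ(S_\mQ(\gamma))\simeq V_\mQ(\gamma)$, and the tensor-functor property (Theorem \ref{thm:gQASW duality}(1)) yields $\F_\mQ(S_\mQ(\al)\conv S_\mQ(\beta))\simeq V_\mQ(\al)\tens V_\mQ(\beta)$. Since $\F_\mQ$ is exact (Theorem \ref{thm: exact functor}) and induces a bijection between simple objects of $\Rep(R)$ and simple objects of $\mC_\mQ$ (Theorem \ref{thm: simples to simples}), a non-simple convolution would carry at least two composition factors, each sent to a distinct simple by $\F_\mQ$, forcing the tensor product to be non-simple as well. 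Hence $S_\mQ(\al)\conv S_\mQ(\beta)$ is simple iff $V_\mQ(\al)\tens V_\mQ(\beta)$ is simple.

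Second I would match this simplicity with the combinatorial condition using the denominator formulas. By Theorem \ref{Thm: basic properties}(2), $V(\varpi_{\bar i})_x\tens V(\varpi_{\bar j})_y$ is simple iff $d_{\bar i,\bar j}(z)$ does not vanish at $z=x/y$ nor $z=y/x$. With $(\bar i,p)=\widehat{\Omega}_{[\mQ]}(\al)$ and $(\bar j,q)=\widehat{\Omega}_{[\mQ]}(\beta)$, Definition \ref{def: VmQ(beta)} shows that $x/y$ is of the form $(-1)^{\bar i-\bar j}q_s^{p-q}$ in type $A_{2n-1}$, or $(-q_s)^{p-q}$ in type $D_{n+1}$. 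Applying Theorem \ref{thm: den dist} factorises
\[
d_{\bar i,\bar j}(z)=\widehat{D}_{\bar i,\bar j}(z)\cdot (z-q^{h})^{\delta_{\bar i,\bar j}},
\]
with $h=2n-1$ or $h=n+1$ according to the type. By Definition \ref{def: Dist poly Q}, the $\widehat{D}$-factor vanishes at $z=x/y$ exactly when $\mathtt{o}^{[\mQ]}_{|p-q|}(\bar i,\bar j)>0$, which by the definition of $o^{[\mQ]}_t$ is equivalent to $\dist_{[\mQ]}(\al,\beta)\ge 1$, i.e.\ to $(\al,\beta)$ being \emph{not} $[\mQ]$-simple; the other evaluation $z=y/x$ is handled symmetrically via $d_{\bar i,\bar j}=d_{\bar j,\bar i}$.

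Third I would rule out a spurious contribution from the factor $(z-q^{h})^{\delta_{\bar i,\bar j}}$, which is the only way the chain of equivalences above could fail. This factor only intervenes when $\bar i=\bar j$ and $|p-q|$ hits the critical value making $\pm q_s^{\pm(p-q)}=q^h$. For any such extremal pair within a residue row of $\WUp_{[\mQ]}$, I would exhibit an intermediate sequence $\um$ with $\soc_{[\mQ]}(\al,\beta)\prec^\tb_{[\mQ]}\um\prec^\tb_{[\mQ]}(\al,\beta)$, using the explicit characterisation of $[\mQ]$-minimal pairs from the propositions of Section 3 (which pin down the coordinate pattern of minimal pairs inside $\WUp_{[\mQ]}$), thereby forcing $\dist_{[\mQ]}(\al,\beta)\ge 1$ and excluding such pairs from the $[\mQ]$-simple ones. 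Combined with the first two steps, this completes the equivalence.

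The main obstacle is this third step: the denominator formula carries the parasitic factor $(z-q^{h})^{\delta_{\bar i,\bar j}}$ which is invisible to the folded distance polynomial, and its control requires a separate combinatorial inspection of extremal coordinate configurations in each of the two types $A_{2n-1}$ and $D_{n+1}$, making essential use of the propositions identifying $[\mQ]$-minimal pairs by their folded coordinates.
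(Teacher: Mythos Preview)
Your route is quite different from the paper's. The paper argues entirely on the \KLR side: it cites \cite[Theorem 5.10]{Oh15E} for the implication ``$[\mQ]$-simple $\Rightarrow$ simple module'' and, for the converse, invokes the explicit classification from \cite{OS16B,OS16C} of the sequences $\um$ covered by $\up$ in the order $\prec^\tb_{[\mQ]}$---either the single root $\al+\beta$, a pair $(\al',\beta')$, or a triple $(\mu,\nu,\eta)$---and in each case writes down by hand a non-zero composite $S_\mQ(\um)\to S_\mQ(\al)\conv S_\mQ(\beta)$ whose image is not the simple head. The $V_\mQ$-statement then drops out by applying $\F_\mQ$. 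Your denominator/distance dictionary is a genuinely alternative strategy.

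That said, your Step~2 has a gap. The quantity $o^{[\mQ]}_t(\bar i,\bar j)$ is by definition $\dist_{[\mQ]}(\al',\beta')$ for any $(\al',\beta')\in\Phi_{[\mQ]}(\bar i,\bar j)[t]$, and membership in that set \emph{requires} $\al'\prec_{[\mQ]}\beta'$ or $\beta'\prec_{[\mQ]}\al'$. So your claimed biconditional $o^{[\mQ]}_{|p-q|}(\bar i,\bar j)>0 \Leftrightarrow \dist_{[\mQ]}(\al,\beta)\ge 1$ is only immediate when $\al,\beta$ are comparable. For incomparable pairs you need two separate facts: that they are automatically $[\mQ]$-simple, and that $V_\mQ(\al)\tens V_\mQ(\beta)$ is simple. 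The second is exactly Lemma~\ref{lem: k<l no poles} applied in both orders, which you never invoke; the first is a general property of convex orders that also must be stated and justified.

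Conversely, your ``main obstacle'' in Step~3 dissolves on inspection. Each folded residue row of $\WUp_{[\mQ]}$ carries exactly $2n-1$ (type $A_{2n-1}$) or $n+1$ (type $D_{n+1}$) vertices at spacing $2$, so within a single row $|p-q|\le 4n-4$, respectively $|p-q|\le 2n$. The parasitic factor $(z-q^{h})$ would require $|p-q|=4n-2$, respectively $|p-q|=2n+2$, which never occurs. Hence the extra factor never vanishes at the spectral ratios arising from $\WUp_{[\mQ]}$, and no case-by-case minimal-pair inspection is needed.
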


\begin{proof}
Only if part is an immediate consequence of \cite[Theorem 5.10]{Oh15E} (see also \cite[Theorem 3.1]{Mc12}).
Assume that $\dist_{[\mQ]}(\up)>0$. Then
there exists $\um$ such that $\um \prec^\tb_{[\mQ]} \up$ and there exists no $\um'$ such that
$$\um \prec^\tb_{[\mQ]} \um' \prec^\tb_{[\mQ]} \up.$$
Furthermore, by \cite[Theorem 6.16]{OS16B} and \cite[Theorem 8.12]{OS16C}, $\um$ satisfies one of the following conditions$\colon$
\begin{enumerate}
\item[{\rm (1)}] if $\um=\al+\beta$, $\up$ is a minimal pair of $\al+\beta$.
\item[{\rm (2)}] if (a) $\al+\beta \not \in \PR$, (b) $\dist_{[\mQ]}(\al,\beta)=2$ and (c) $[\mQ]$ is of type $A_{2n-1}$, $\um$ is a triple $(\mu,\nu,\eta)$ such that
\begin{itemize}
\item[{\rm (i)}] $\mu+\nu \in \PR$, $(\mu,\nu)$ is a $[\mQ]$-minimal pair of $\mu+\nu$ and $\al-\mu,\beta-\nu \in \PR$,
\item[{\rm (ii)}] $\eta$ is not comparable to $\mu$ and $\nu$ with respect to $\prec_{[\mQ]}$,
\item[{\rm (iii)}] $\eta=(\al-\mu)+(\beta-\nu)$ and $((\al-\mu),(\beta-\nu))$ is a $[\mQ]$-minimal pair for $\eta$,
\item[{\rm (iv)}] $(\al-\mu,\mu)$, $(\nu,\beta-\nu)$  are $[\mQ]$-minimal pairs for $\al$ and $\beta$ respectively,
\end{itemize}
\item[{\rm (3)}] if $\al+\beta \not \in \PR$ and it does not satisfy one of (b) and (c) in {\rm (2)}, then $\um$ is a pair $(\al',\beta')$ and either
{\rm (i)} $\al'-\al, \beta-\beta' \in \PR$ or {\rm (ii)} $\al-\al', \beta'-\beta \in \PR$
such that {\rm (i$^*$)} $(\al'-\al,\al)$ is a minimal pair for $\al$ or {\rm (ii$^*$)} $(\beta'-\beta,\beta)$ is a minimal pair for $\beta$.
\end{enumerate}
(see \cite[Remark 6.23]{OS16B} and \cite[Remark 8.19]{OS16C} also).
Thus our assertion for $\um=\al+\beta$ holds by \cite[Theorem 3.1]{Mc12}.
For the case when $\um$ is a pair, we have a non-zero composition of homomorphisms
\begin{itemize}
\item[{\rm (i)}] $S_\mQ(\al')\conv S_\mQ(\beta') \rightarrowtail S_\mQ(\al)\conv S_\mQ(\al'-\al) \conv  S_\mQ(\beta') \twoheadrightarrow S_\mQ(\al)\conv S_\mQ(\beta)$ or
\item[{\rm (ii)}] $S_\mQ(\al')\conv S_\mQ(\beta') \rightarrowtail S_\mQ(\al')\conv S_\mQ(\beta'-\beta) \conv  S_\mQ(\beta) \twoheadrightarrow S_\mQ(\al)\conv S_\mQ(\beta)$,
\end{itemize}
by \cite[Corollary 3.11]{KKKO14S}.
For the case when $\um$ is a triple, we have a non-zero composition
\begin{align*}
& S_\mQ(\mu)  \conv S_\mQ(\nu) \conv S_\mQ(\eta) \simeq S_\mQ(\mu) \conv S_\mQ(\eta) \conv S_\mQ(\nu) \\
& \hspace{5ex} \rightarrowtail  S_\mQ(\mu) \conv S_\mQ(\al-\mu) \conv S_\mQ(\beta-\nu) \conv  S_\mQ(\nu)
   \twoheadrightarrow S_\mQ(\mu) \conv S_\mQ(\al-\mu)  \conv  S_\mQ(\beta).
\end{align*}
and hence a desired non-zero composition
\begin{align*}
& S_\mQ(\mu)  \conv S_\mQ(\nu) \conv S_\mQ(\eta) \to S_\mQ(\mu) \conv  S_\mQ(\al-\mu)  \conv  S_\mQ(\beta) \twoheadrightarrow S_\mQ(\al)  \conv  S_\mQ(\beta)
\end{align*}
by \cite[Corollary 3.11]{KKKO14S}.
Hence our assertion follows from the fact that
the heads of $S_\mQ(\um)$ and $S_\mQ(\al)\conv S_\mQ(\beta)$ are distinct. Our assertion for
$V_\mQ(\al) \conv V_\mQ(\beta)$  can be obtained by applying the functor $\F_\mQ$.
\end{proof}

\begin{corollary} \label{cor: simple simple}
For $\um \in \Z_{\ge 0}^\N$,
$$S_{\mQ}(\um) \text{ and } V_{\mQ}(\um) \text{ are simple if and only if $\um$ is $[\mQ]$-simple.} $$
\end{corollary}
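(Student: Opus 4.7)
The strategy is to reduce Corollary \ref{cor: simple simple} to the pair case already established in Theorem \ref{thm: simple simple} via the standard theory of real simple modules and their convolution products. First, I note that the two simplicity statements on $S_\mQ(\um)$ and $V_\mQ(\um)$ are equivalent. Indeed, $\F_\mQ$ is an exact tensor functor (Theorem \ref{thm:gQASW duality}) which, by Theorems \ref{thm: FQ2 SQ} and \ref{thm: simples to simples}, sends $S_\mQ(\um)$ to $V_\mQ(\um)$ and induces a bijection between the isomorphism classes of simples in $\Rep(R_\fg)$ and those in $\mC_\mQ$. Hence $S_\mQ(\um)$ and $V_\mQ(\um)$ have the same number of composition factors, so one is simple iff the other is, and it suffices to prove the equivalence for $S_\mQ(\um)$.

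For the ``if'' direction, suppose $\um$ is $[\mQ]$-simple. If $\um$ is supported on a single root $\beta_k$, then $S_\mQ(\um) = S_\mQ(\beta_k)^{\conv m_k}$ is simple by property (1) of the PBW categorification theorem. Otherwise, for every pair $i_1 < i_2$ of indices in the support of $\um$, the pair $(\beta_{i_1},\beta_{i_2})$ is $[\mQ]$-simple by hypothesis, and Theorem \ref{thm: simple simple} yields that $S_\mQ(\beta_{i_1}) \conv S_\mQ(\beta_{i_2})$ is simple; equivalently, $S_\mQ(\beta_{i_1})$ and $S_\mQ(\beta_{i_2})$ commute up to a grading shift. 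Since commutativity of real simple modules is inherited by convolution powers (standard from \cite{KKKO14S}), the real simple factors $S_\mQ(\beta_{i_s})^{\conv m_{i_s}}$ pairwise commute, and therefore their convolution product---which is $S_\mQ(\um)$---is simple.

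For the ``only if'' direction I argue the contrapositive. Suppose $\um$ is not $[\mQ]$-simple. Since any sequence with single-root support is automatically $[\mQ]$-simple, $\um$ must have at least two roots in its support, and by the definition of $[\mQ]$-simplicity there exist $i_1 < i_2$ with $m_{i_1},m_{i_2}\ge 1$ such that the pair $(\beta_{i_1},\beta_{i_2})$ is not $[\mQ]$-simple. By Theorem \ref{thm: simple simple}, $S_\mQ(\beta_{i_1}) \conv S_\mQ(\beta_{i_2})$ is not simple, so these real simples fail to commute. Were $S_\mQ(\um)$ simple, the standard criterion that a convolution of real simples is simple only when every pair of factors commutes (again from \cite{KKKO14S}) would force the factors $S_\mQ(\beta_{i_1})^{\conv m_{i_1}}$ and $S_\mQ(\beta_{i_2})^{\conv m_{i_2}}$ to commute, hence their base factors as well---contradicting Theorem \ref{thm: simple simple}.

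The main obstacle is the careful invocation of the general results on commuting real simple modules from \cite{KKKO14S}---in particular, that mutual commutativity is preserved under taking convolution powers, and that a convolution product of real simples is simple precisely when every pair of factors commutes. Once these general facts are marshaled in the specific order prescribed by the convex order on $\PR$, the argument is a routine reduction to the pair case already handled by Theorem \ref{thm: simple simple}.
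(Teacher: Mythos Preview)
Your argument is correct and is precisely the reduction the paper has in mind: the authors' one-line proof ``It is an immediate consequence of Theorem \ref{thm: simple simple}'' presupposes exactly the passage from arbitrary sequences to pairs via the real-simple-module machinery of \cite{KKKO14S} that you have written out explicitly. Your proof simply unpacks what the paper leaves implicit, so the approaches coincide.
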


\begin{proof}
It is an immediate consequence of Theorem \ref{thm: simple simple}.
\end{proof}

\begin{lemma} \label{lem:simplepole}
Let $U_q'(\g)$ be a quantum affine algebra and let $V$ and $W$ be  good $U_q'(\g)$-modules.
If the normalized $R$-matrix $\Rnorm_{V,W}(z)$ has a simple pole at $z=a$ for some $a \in \ko^\times$, then we have
$${\rm Im} \big( (z-a) \Rnorm_{V,W} |_{z=a} \big) = {\rm Ker} \big( \Rnorm_{W,V} |_{z=a} \big). $$
Moreover, the tensor product $V \otimes W_a$  is of length $2$.
\end{lemma}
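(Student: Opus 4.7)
The plan is to exploit the identity $\Rnorm_{W,V}\circ\Rnorm_{V,W}=\mathrm{id}$ (and its partner in the opposite order), which holds as a rational identity of $U_q'(\g)$-endomorphisms of the generic tensor product $V\otimes W_z$. This follows from Schur's lemma applied to $V\otimes W_z$, which is simple over $\ko(z)$, together with the normalisation $\Rnorm(u_V\otimes u_W)=u_W\otimes u_V$. The symmetry $d_{V,W}=d_{W,V}$ of Theorem~\ref{Thm: basic properties}(4) places the pole of the partner $\Rnorm_{W,V}$ at a different physical parameter, so its specialisation $q\seteq\Rnorm_{W,V}|_{z=a}\colon W_a\otimes V\to V\otimes W_a$ is a well-defined nonzero $U_q'(\g)$-homomorphism.

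Next I would expand both sides in Laurent series at $z=a$. Writing
\[\Rnorm_{V,W}(z)=\frac{p}{z-a}+p_1+O(z-a),\qquad \Rnorm_{W,V}(z)=q+q_1(z-a)+O((z-a)^2),\]
with $p\seteq(z-a)\Rnorm_{V,W}|_{z=a}$, and equating the coefficients of $(z-a)^{-1}$ and $(z-a)^{0}$ in $\Rnorm_{W,V}\circ\Rnorm_{V,W}=\mathrm{id}$ delivers
\[qp=0,\qquad qp_1+q_1p=\mathrm{id}.\]
The first gives $\mathrm{Im}(p)\subseteq\mathrm{Ker}(q)$; applied to any $v\in\mathrm{Ker}(p)$, the second gives $v=q(p_1 v)\in\mathrm{Im}(q)$, hence $\mathrm{Ker}(p)\subseteq\mathrm{Im}(q)$. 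Running the mirror analysis on the companion identity $\Rnorm_{V,W}\circ\Rnorm_{W,V}=\mathrm{id}$ supplies the reverse inclusions and proves the first assertion $\mathrm{Im}(p)=\mathrm{Ker}(q)$ and $\mathrm{Im}(q)=\mathrm{Ker}(p)$.

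For the length-two claim I would invoke the standard fact (due to Akasaka--Kashiwara and exploited throughout \cite{AK,Kas02,KKKO14S}) that the tensor product of two good modules has simple head and simple socle, and that the renormalised $R$-matrix $p$ factors canonically as
\[V\otimes W_a\twoheadrightarrow\hd(V\otimes W_a)\isoto\soc(W_a\otimes V)\hookrightarrow W_a\otimes V,\]
and likewise for $q$. Consequently $\mathrm{Im}(p)=\soc(W_a\otimes V)$ is simple, $\mathrm{Ker}(p)$ is the unique maximal submodule of $V\otimes W_a$, and $\mathrm{Im}(q)=\soc(V\otimes W_a)$. Substituting these identifications into the equality $\mathrm{Im}(q)=\mathrm{Ker}(p)$ from the first part yields $\soc(V\otimes W_a)=\operatorname{rad}(V\otimes W_a)$, and since the socle is simple it is then the unique nonzero proper submodule of $V\otimes W_a$, so this module has length exactly two.

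The main obstacle will be the identification of $\mathrm{Im}(p)$ with $\soc(W_a\otimes V)$ (and the parallel statement for $q$): the purely formal R-matrix bookkeeping of the previous paragraphs only shows that $V\otimes W_a$ and $W_a\otimes V$ are mutually obtained as extensions of $\mathrm{Ker}(p)$ by $\mathrm{Ker}(q)$ and vice versa, and this alone does not bound the composition length. The required simple head/socle structure comes from the general theory of good modules and is the essential nontrivial input; once that is in hand, the rest reduces to linear algebra and Laurent expansion.
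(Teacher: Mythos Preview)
Your proof is correct and takes essentially the same route as the paper: the paper packages your Laurent-expansion computation as the linear-algebra fact that if $A(z)B(z)=(z-a)\,\mathrm{id}$ with $A,B$ regular at $z=a$ then $\mathrm{Im}\,A(a)=\mathrm{Ker}\,B(a)$, and then deduces length two from the simplicity of $\mathrm{Im}(p)$ and $\mathrm{Ker}(q)$ via \cite[Theorem~3.2]{KKKO14S}, exactly as you do. One small correction: the regularity of $q=\Rnorm_{W,V}|_{z=a}$ is not a consequence of the symmetry in Theorem~\ref{Thm: basic properties}(4) (which in any case is stated only for fundamental modules), but of part~(1), which forces the zeros of $d_{W,V}$ to lie in $\C[[q^{1/m}]]\,q^{1/m}$ so that $a^{-1}$ cannot be one of them.
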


\begin{proof}
The first assertion follows from the fact$\colon$
\begin{eqnarray*}&&
\parbox{80ex}{
Let  $A(z)$ and $B(z)$ be  $n\times n$-matrices with entries in rational functions in $z$.
Assume that  $A(z)$ and $B(z)$ have no poles at $z=a$.
If $A(z)B(z) =(z-a){\rm id}$, then we have  ${\rm Im} A(a) ={\rm Ker} B(a)$.}
\end{eqnarray*}

Recall that ${\rm Ker} \big( \Rnorm_{W,V} |_{z=a} \big)$ is simple.
By \cite[Theorem 3.2]{KKKO14S}, we know that ${\rm Im} \big( (z-a) \Rnorm_{V,W} |_{z=a} \big)$ is also simple.
Hence we conclude that $V \otimes W_a$ is of composition length $2$ by the first assertion.
\end{proof}

\begin{theorem} For a pair $\up=(\al,\beta)$ with $\dist_{[\mQ]}(\up) >0$,
the composition length of $V_\mQ(\al) \conv V_\mQ(\beta)$ is $2$ and the composition series
of $V_\mQ(\al) \conv V_\mQ(\beta)$ consists of its distinct head and socle. In particular,
\begin{enumerate}
\item[{\rm (1)}] if $\dist_{[\mQ]}(\up)=1$, $\soc(V_\mQ(\al) \conv V_\mQ(\beta)) \simeq V_{\mQ}(\soc_{[\mQ]}(\al,\beta))$,
\item[{\rm (2)}] if $\dist_{[\mQ]}(\up)=2$, $\soc(V_\mQ(\al) \conv V_\mQ(\beta)) \simeq \hd(V_\mQ(\um))$ where
$\um$ is a unique sequence such that
$$ \soc_{[\mQ]}(\up) \prec^\tb_{[\mQ]} \um \prec^\tb_{[\mQ]} \up.$$
\end{enumerate}
The same assertions for $S_\mQ(\al) \conv S_\mQ(\beta)$ hold.
\end{theorem}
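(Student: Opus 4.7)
The plan hinges on three ingredients: the denominator formula of Theorem \ref{thm: den dist} expressed through the folded distance polynomial, the simple-pole lemma (Lemma \ref{lem:simplepole}), and the explicit non-zero homomorphisms constructed in the proof of Theorem \ref{thm: simple simple}. First I would establish that the composition length is exactly $2$. Since $\dist_{[\mQ]}(\up)\le 2=\ov{\mathsf{d}}$, the multiplicity $\mathtt{o}^{[\mQ]}_t(\ov{k},\ov{l})=\lceil\dist_{[\mQ]}(\al,\beta)/\ov{\mathsf{d}}\rceil$ appearing in Definition \ref{def: Dist poly Q} is always $1$, so Theorem \ref{thm: den dist} shows that $\Rnorm_{V_\mQ(\al),V_\mQ(\beta)}(z)$ has at most a simple pole at the relevant spectral parameter. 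Because $\dist_{[\mQ]}(\up)>0$ already forces $V_\mQ(\al)\conv V_\mQ(\beta)$ to be reducible by Theorem \ref{thm: simple simple}, the pole is genuine, and Lemma \ref{lem:simplepole} then gives composition length exactly $2$, with distinct head and socle as the two composition factors.

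For case (1), when $\dist_{[\mQ]}(\up)=1$, the classification of low-distance pairs in \cite{OS16B,OS16C} identifies $\up$ with a $[\mQ]$-minimal pair of $\ga\seteq\soc_{[\mQ]}(\up)=\al+\beta$. The proof of Theorem \ref{thm: FQ2 SQ} already establishes that $V_\mQ(\ga)$ is the simple head of $V_\mQ(\beta)\conv V_\mQ(\al)$. A direct consequence of Lemma \ref{lem:simplepole} is that, whenever the tensor product has composition length $2$, the head of $V\conv W$ and the socle of $W\conv V$ are isomorphic as simple modules; applying this with $V=V_\mQ(\beta)$ and $W=V_\mQ(\al)$ yields $\soc(V_\mQ(\al)\conv V_\mQ(\beta))\simeq V_\mQ(\ga)=V_\mQ(\soc_{[\mQ]}(\up))$.

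For case (2), when $\dist_{[\mQ]}(\up)=2$, the classification of \cite{OS16B,OS16C} supplies a unique intermediate sequence $\um$ together with three structural sub-cases (the triple case and two inclusion-pair cases), already catalogued inside the proof of Theorem \ref{thm: simple simple}. In each sub-case that proof constructs an explicit non-zero composition $S_\mQ(\um)\to S_\mQ(\al)\conv S_\mQ(\beta)$ via iterated minimal-pair inclusions together with \cite[Corollary 3.11]{KKKO14S}; applying the exact functor $\F_\mQ$ produces a non-zero map $V_\mQ(\um)\to V_\mQ(\al)\conv V_\mQ(\beta)$ whose image necessarily contains $\hd V_\mQ(\um)$. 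Since $\hd V_\mQ(\um)$ and $\hd(V_\mQ(\al)\conv V_\mQ(\beta))$ correspond to distinct elements of the upper global basis via Corollary \ref{cor: PBW upper global}, they are non-isomorphic, so the only option is that $\hd V_\mQ(\um)$ is the simple socle of $V_\mQ(\al)\conv V_\mQ(\beta)$. The corresponding assertions for $S_\mQ(\al)\conv S_\mQ(\beta)$ are handled directly at the quiver Hecke level by the same constructions together with \cite{KKKO14S}.

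The main obstacle will be the distance-$2$ case: verifying in each of the three sub-cases that the constructed composition is non-zero and that its image is precisely $\hd V_\mQ(\um)$ rather than being absorbed into $\hd(V_\mQ(\al)\conv V_\mQ(\beta))$. This relies on the sharp uniqueness of $\um$ provided by \cite{OS16B,OS16C}, the non-degeneracy criteria for convolution products in \cite{KKKO14S}, and the separation of upper global basis elements furnished by Corollary \ref{cor: PBW upper global}.
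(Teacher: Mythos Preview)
Your overall strategy matches the paper's: use Lemma~\ref{lem:simplepole} together with the order-one root property of the denominators (encoded either in the explicit formulas or in Definition~\ref{def: Dist poly Q}) to obtain composition length~$2$, and then identify the socle via the non-zero homomorphisms already built in the proof of Theorem~\ref{thm: simple simple}. Your treatment of $\dist_{[\mQ]}(\up)=2$ is essentially the paper's argument. However, your case $\dist_{[\mQ]}(\up)=1$ has a genuine gap.

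You assert that ``the classification \ldots\ identifies $\up$ with a $[\mQ]$-minimal pair of $\ga\seteq\soc_{[\mQ]}(\up)=\al+\beta$''. The equality $\soc_{[\mQ]}(\up)=\al+\beta$ is nowhere guaranteed: the relevant statement in \cite{OS16B,OS16C} only says that $\up$ is $[\mQ]$-minimal over its socle, and the socle need not be a single positive root. Case~(3) in the proof of Theorem~\ref{thm: simple simple} explicitly covers pairs with $\al+\beta\notin\PR$ and $\dist_{[\mQ]}(\up)=1$, where $\soc_{[\mQ]}(\up)$ is itself a pair $(\al',\beta')$. In that situation Theorem~\ref{thm: FQ2 SQ} does not apply, so your head-of-reversed-product argument collapses. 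The paper avoids this by treating both distances uniformly: since $\soc_{[\mQ]}(\up)$ is $[\mQ]$-simple, the module $V_\mQ(\soc_{[\mQ]}(\up))$ is simple by Corollary~\ref{cor: simple simple}, and the non-zero map $V_\mQ(\soc_{[\mQ]}(\up))\to V_\mQ(\al)\conv V_\mQ(\beta)$ produced in the proof of Theorem~\ref{thm: simple simple} (valid in all three sub-cases) forces $V_\mQ(\soc_{[\mQ]}(\up))$ to be the socle via \cite[Theorem~3.2]{KKKO14S}. You should invoke this same mechanism for $\dist_{[\mQ]}(\up)=1$ rather than specialising to $\al+\beta\in\PR$. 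Finally, for the $S_\mQ$ statement the paper does not argue ``directly at the quiver Hecke level'' but simply transports the length-$2$ result back from $V_\mQ$ using that $\F_\mQ$ is exact and sends simples to simples; that is the cleanest route.
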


\begin{proof}
By Theorem \ref{thm: simple simple}, Proposition \ref{lem:simplepole} and the fact that $d^{B_n^{(1)}}_{k,l}(z)$ and $d^{C_n^{(1)}}_{k,l}(z)$
have only roots of order $1$, $V_\mQ(\al) \conv V_\mQ(\beta)$ has composition length $2$ if $\dist_{\mQ}(\al,\beta) \ne 0$. If
$\dist_{[\mQ]}(\up)=1$, then $\soc_\mQ(\up)$ is a unique sequence such that $\soc_\mQ(\up) \prec_{[\mQ]}^\tb \up$
and $V_\mQ(\soc_\mQ(\up))$ is simple. Thus, by the previous proof, there exist a non-zero homomorphism
$$V_\mQ(\soc_\mQ(\up)) \Lto V_\mQ(\up).$$ Hence $V_\mQ(\soc_\mQ(\up))\simeq \soc(V_\mQ(\up))$ by \cite[Theorem 3.2]{KKKO14S}. For $\dist_{\mQ}(\up)=2$,
we have a non-zero homomorphism
$$V_\mQ(\um) \Lto V_\mQ(\up)$$
where $\um$ is the unique sequence such that $\soc_\mQ(\up) \prec_{[\mQ]}^\tb \um \prec_{[\mQ]}^\tb \up$. Thus our last assertion
follows since
\begin{itemize}
\item $V_\mQ(\up)$ has composition length $2$,
\item $V_\mQ(\up)$ and $V_\mQ(\um)$ have distinct heads.
\end{itemize}
Our assertions for $S_\mQ(\up)$ can be obtained by applying the functor $\F_\mQ$.
\end{proof}

\section*{Appendix: Exceptional doubly laced type}

In this appendix, we discuss the exceptional doubly laced type analogue of our main results and give several conjectures on it. We first recall the $\vee$-foldable
cluster point $\lf \mQ \rf$ of $E_6$ associated with $\vee$ in \eqref{eq: F_4} and the twisted Coxeter element $s_1s_2s_6s_3$ \cite[Appendix]{OS16B}$\colon$
$$ \lf \mQ \rf = \lf \redez \rf \qquad \text{ where } \quad \redez= \prod_{k=0}^{8} (s_1s_2s_6s_3)^{k\vee}.  $$
Here,
\begin{align} \label{eq: vee def}
& (s_{j_1} \cdots s_{j_n})^\vee \seteq s_{j^\vee_1} \cdots s_{j^\vee_n} \text{ and }
(s_{j_1} \cdots s_{j_n})^{k \vee} \seteq  ( \cdots ((s_{j_1} \cdots s_{j_n} \underbrace{ )^\vee )^\vee \cdots )^\vee}_{ \text{ $k$-times} }.
\end{align}
Note that the number of distinct commutation classes in $\lf \mQ \rf$ is $32$ (\cite[Appendix]{OS16B}).

Now, we assign the coordinates of $\Upsilon_{[\redez]}$ in the following way (see also \cite[Appendix]{Oh15E})$\colon$
$$\scalebox{0.57}{\xymatrix@C=0.1ex@R=2ex{
(i,p)& 1&2& 3& 4& 5& 6& 7& 8& 9& 10& 11& 12& 13& 14&  15& 16& 17& 18& 19 & 20 \\
1 &&&& {\scriptstyle\prt{001}{110}} \ar@{->}[drr] &&&& {\scriptstyle\prt{011}{101}}\ar@{->}[drr] &&&& {\scriptstyle\prt{112}{111}}\ar@{->}[drr]
&&&& {\scriptstyle\prt{010}{000}} \ar@{->}[drr] &&&& {\scriptstyle\prt{100}{000}} \\
2 && {\scriptstyle\prt{001}{100}} \ar@{->}[urr]\ar@{->}[dr] &&&& {\scriptstyle\prt{012}{211}}\ar@{->}[urr]\ar@{->}[dr] &&&&{\scriptstyle\prt{123}{212}}
\ar@{->}[urr]\ar@{->}[dr]
&&&&{\scriptstyle\prt{122}{111}} \ar@{->}[urr]\ar@{->}[dr] &&&& {\scriptstyle\prt{110}{000}}\ar@{->}[urr]\\
3 & {\scriptstyle\prt{001}{000}}\ar@{->}[ur]\ar@{->}[dr] && {\scriptstyle\prt{001}{101}} \ar@{->}[dr]\ar@{->}[ddr] && {\scriptstyle\prt{011}{100}} \ar@{->}[ur]\ar@{->}[dr]
&& {\scriptstyle\prt{012}{111}} \ar@{->}[dr]\ar@{->}[ddr] && {\scriptstyle\prt{112}{211}}\ar@{->}[ur]\ar@{->}[dr] &&{\scriptstyle\prt{122}{101}}\ar@{->}[ddr]\ar@{->}[dr]
&&{\scriptstyle\prt{011}{111}}\ar@{->}[ur]\ar@{->}[dr] &&
{\scriptstyle\prt{111}{110}}\ar@{->}[dr]\ar@{->}[ddr] && {\scriptstyle\prt{111}{001}}\ar@{->}[ur]\ar@{->}[dr]\\
6 && {\scriptstyle\prt{001}{001}}\ar@{->}[ur] && {\scriptstyle\prt{000}{100}} \ar@{->}[ur] && {\scriptstyle\prt{011}{000}} \ar@{->}[ur] &&
{\scriptstyle\prt{001}{111}}\ar@{->}[ur] &&{\scriptstyle\prt{111}{100}}\ar@{->}[ur] && {\scriptstyle\prt{011}{001}} \ar@{->}[ur]&& {\scriptstyle\prt{000}{110}}
\ar@{->}[ur] &&{\scriptstyle\prt{111}{000}}\ar@{->}[ur]  &&{\scriptstyle\prt{000}{001}} \\
4 &&&& {\scriptstyle\prt{012}{101}}\ar@{->}[drr]\ar@{->}[uur] &&&& {\scriptstyle\prt{123}{211}}\ar@{->}[drr]\ar@{->}[uur]
&&&& {\scriptstyle\prt{122}{211}}\ar@{->}[drr]\ar@{->}[uur] &&&& {\scriptstyle\prt{111}{111}} \ar@{->}[drr]\ar@{->}[uur]\\
5 &&&&&& {\scriptstyle\prt{112}{101}}\ar@{->}[urr]  &&&&
{\scriptstyle\prt{011}{110}} \ar@{->}[urr] &&&&
{\scriptstyle\prt{111}{101}} \ar@{->}[urr]  &&&&
{\scriptstyle\prt{000}{010}} }}
$$
$\left({\scriptstyle\prt{a_1a_2a_3}{a_4a_5a_6}} \seteq \displaystyle\sum_{i=1}^6 a_i\al_i \right)$. Then the quiver $\Upsilon_{[\redez]}$ is {\em foldable}
in the sense that there exists no $(i,p)$ and $(j,q) \in \Upsilon_{[\redez]}$ such that $i^\vee=j$ and $p=q$. Hence we can fold in a canonical way$\colon$
$$
\scalebox{0.57}{\xymatrix@C=0.1ex@R=2ex{
(\overline{i},p)& 1&2& 3& 4& 5& 6& 7& 8& 9& 10& 11& 12& 13& 14&  15& 16& 17& 18& 19 & 20 \\
1\seteq\overline{1} &&&& {\scriptstyle\prt{001}{110}} \ar@{->}[drr] &&{\scriptstyle\prt{112}{101}}\ar@{->}[drr]&&
{\scriptstyle\prt{011}{101}} \ar@{->}[drr] &&{\scriptstyle\prt{011}{110}} \ar@{->}[drr]&&
{\scriptstyle\prt{112}{111}}\ar@{->}[drr] &&{\scriptstyle\prt{111}{101}} \ar@{->}[drr]&&
{\scriptstyle\prt{010}{000}} \ar@{->}[drr] &&{\scriptstyle\prt{000}{010}}
&& {\scriptstyle\prt{100}{000}}\\
2\seteq\overline{2} && {\scriptstyle\prt{001}{100}} \ar@{->}[urr]\ar@{->}[dr] && {\scriptstyle\prt{012}{101}}\ar@{->}[urr]\ar@{->}[dr]&&
{\scriptstyle\prt{012}{211}}\ar@{->}[urr]\ar@{->}[dr]
&&{\scriptstyle\prt{123}{211}}\ar@{->}[urr]\ar@{->}[dr]&&{\scriptstyle\prt{123}{212}} \ar@{->}[urr]\ar@{->}[dr]
&&{\scriptstyle\prt{122}{211}}\ar@{->}[urr]\ar@{->}[dr]&&{\scriptstyle\prt{122}{111}} \ar@{->}[urr]\ar@{->}[dr]
&&{\scriptstyle\prt{111}{111}} \ar@{->}[urr]\ar@{->}[dr]&& {\scriptstyle\prt{110}{000}}\ar@{->}[urr] \\
3\seteq\overline{3} & {\scriptstyle\prt{001}{000}}\ar@{->}[ur]\ar@{->}[dr] &&
{\scriptstyle\prt{001}{101}} \ar@{->}[dr]\ar@{->}[ur] &&
{\scriptstyle\prt{011}{100}} \ar@{->}[ur]\ar@{->}[dr] &&
{\scriptstyle\prt{012}{111}} \ar@{->}[dr]\ar@{->}[ur] &&
{\scriptstyle\prt{112}{211}}\ar@{->}[ur]\ar@{->}[dr]
&&{\scriptstyle\prt{122}{101}}\ar@{->}[ur]\ar@{->}[dr]
&&{\scriptstyle\prt{011}{111}}\ar@{->}[ur]\ar@{->}[dr] &&
{\scriptstyle\prt{111}{110}}\ar@{->}[dr]\ar@{->}[ur] && {\scriptstyle\prt{111}{001}}\ar@{->}[ur]\ar@{->}[dr]\\
4\seteq\overline{6} && {\scriptstyle\prt{001}{001}}\ar@{->}[ur] &&
{\scriptstyle\prt{000}{100}} \ar@{->}[ur] &&
{\scriptstyle\prt{011}{000}} \ar@{->}[ur] &&
{\scriptstyle\prt{001}{111}}\ar@{->}[ur]
&&{\scriptstyle\prt{111}{100}}\ar@{->}[ur] &&
{\scriptstyle\prt{011}{001}} \ar@{->}[ur]&&
{\scriptstyle\prt{000}{110}}
\ar@{->}[ur] &&{\scriptstyle\prt{111}{000}}\ar@{->}[ur]  &&{\scriptstyle\prt{000}{001}} }}
$$
Thus $\widehat{\Upsilon}_{[\redez]}$ is well-defined.

Furthermore, the action of reflection maps on $\lf \mQ \rf$ is well-described by the datum of $F_4$ in the following sense (see also
\cite[Algorithm 7.6]{OS16B}, \cite[Algorithm 7.15]{OS16C})$\colon$
$$
\Delta : \xymatrix@R=0.5ex@C=4ex{
*{\circ}<3pt> \ar@{-}[r]_<{1 \ \ }  &*{\circ}<3pt>
\ar@{=>}[r]_<{2 \ \ } & *{\circ}<3pt> \ar@{-}[r]_<{3 \ \ } & *{\circ}<3pt> \ar@{-}[l]^<{ \ \ 4}} \text{ of type $F_4$}
$$
Let us denote by {\rm (i)} $\mathsf{D}={\rm diag}(d_{i} \ | \ 1 \le i \le 4 )$ the diagonal matrix
which diagonalizes the Cartan matrix $\cmA$ of type $F_{4}$, {\rm (ii)} $\overline{\mathsf{d}}={\rm lcm}(d_{i} \ | \ 1 \le i \le 4)=2$,
{\rm (iii)} $\al_i$ a sink of $\widehat{\Upsilon}_{[\redez]}$
and {\rm (iv)} $\mathsf{h}^\vee=9$ the dual Coxeter number of type $F_4$. Now the algorithm obtaining $\widehat{\Upsilon}_{[\redez]r_i}$ from
$\widehat{\Upsilon}_{[\redez]}$ can be described as follows$\colon$
\begin{enumerate}
\item[{\rm (A1)}] Remove the vertex $(i,p)$ corresponding $(\al_i)$ and arrows entering into $(i,p)$ in
$\widehat{\Upsilon}_{[\redez]}$.
\item[{\rm (A2)}] Add the vertex $(i,p-\overline{\mathsf{d}} \times \mathsf{h}^\vee)$ and
arrows to all vertices whose coordinates are $(j, p-\overline{\mathsf{d}} \times \mathsf{h}^\vee+\min(d_{i},d_{j})) \in \widehat{\Upsilon}_{[\redez]}$,
where $j$ is adjacent to $i$ in $\Delta$.
\item[{\rm (A3)}] Label the vertex $(i,p-\overline{\mathsf{d}} \times \mathsf{h}^\vee)$ with $\al_i$ and change the labels $\beta$ to
$s_i(\beta)$ for all $\beta \in \widehat{\Upsilon}_{[\redez]}
\setminus \{\al_i\}$.
\end{enumerate}

\medskip

For example $\widehat{\Upsilon}_{[\redez]r_1}$ can be depicted as follows$\colon$
$$
\scalebox{0.62}{\xymatrix@C=0.1ex@R=2ex{
(\overline{i},p)& 1&2& 3& 4& 5& 6& 7& 8& 9& 10& 11& 12& 13& 14&  15& 16& 17& 18 \\
1 && {\scriptstyle\prt{100}{000}} \ar@{->}[drr] && {\scriptstyle\prt{001}{110}} \ar@{->}[drr] &&{\scriptstyle\prt{012}{101}}\ar@{->}[drr]&&
{\scriptstyle\prt{111}{101}} \ar@{->}[drr] &&{\scriptstyle\prt{111}{110}} \ar@{->}[drr]&&
{\scriptstyle\prt{012}{111}}\ar@{->}[drr] &&{\scriptstyle\prt{011}{101}} \ar@{->}[drr]&&
{\scriptstyle\prt{110}{000}} \ar@{->}[drr] &&{\scriptstyle\prt{000}{010}}
\\
2 && {\scriptstyle\prt{001}{100}} \ar@{->}[urr]\ar@{->}[dr] && {\scriptstyle\prt{112}{101}}\ar@{->}[urr]\ar@{->}[dr]&&
{\scriptstyle\prt{112}{211}}\ar@{->}[urr]\ar@{->}[dr]
&&{\scriptstyle\prt{123}{211}}\ar@{->}[urr]\ar@{->}[dr]&&{\scriptstyle\prt{123}{212}} \ar@{->}[urr]\ar@{->}[dr]
&&{\scriptstyle\prt{122}{211}}\ar@{->}[urr]\ar@{->}[dr]&&{\scriptstyle\prt{122}{111}} \ar@{->}[urr]\ar@{->}[dr]
&&{\scriptstyle\prt{011}{111}} \ar@{->}[urr]\ar@{->}[dr]&& {\scriptstyle\prt{010}{000}} \\
3 & {\scriptstyle\prt{001}{000}}\ar@{->}[ur]\ar@{->}[dr] &&
{\scriptstyle\prt{001}{101}} \ar@{->}[dr]\ar@{->}[ur] &&
{\scriptstyle\prt{111}{100}} \ar@{->}[ur]\ar@{->}[dr] &&
{\scriptstyle\prt{112}{111}} \ar@{->}[dr]\ar@{->}[ur] &&
{\scriptstyle\prt{012}{211}}\ar@{->}[ur]\ar@{->}[dr]
&&{\scriptstyle\prt{122}{101}}\ar@{->}[ur]\ar@{->}[dr]
&&{\scriptstyle\prt{111}{111}}\ar@{->}[ur]\ar@{->}[dr] &&
{\scriptstyle\prt{011}{110}}\ar@{->}[dr]\ar@{->}[ur] && {\scriptstyle\prt{011}{001}}\ar@{->}[ur]\ar@{->}[dr]\\
4 && {\scriptstyle\prt{001}{001}}\ar@{->}[ur] &&
{\scriptstyle\prt{000}{100}} \ar@{->}[ur] &&
{\scriptstyle\prt{111}{000}} \ar@{->}[ur] &&
{\scriptstyle\prt{001}{111}}\ar@{->}[ur]
&&{\scriptstyle\prt{011}{100}}\ar@{->}[ur] &&
{\scriptstyle\prt{111}{001}} \ar@{->}[ur]&&
{\scriptstyle\prt{000}{110}}
\ar@{->}[ur] &&{\scriptstyle\prt{011}{000}}\ar@{->}[ur]  &&{\scriptstyle\prt{000}{001}} }}
$$

\begin{remark} For every $\Upsilon_{[\mQ]}$ ($[\mQ] \in \lf \mQ \rf$) of $E_6$, {\em the twisted
additive property} observed in \cite{OS16B,OS16C} also holds.
\end{remark}

By applying the results in \cite{OS16B,OS16C}, one can check that the folded distance polynomials $D_{k,l}(z)$ are well-defined on $\lf \mQ \rf$
and have natural conjectural formulas for $d^{F^{(1)}_4}_{k,l}(z)$ as follows$\colon$ Set $q^2_s=q$.
\fontsize{9}{9}\selectfont
\begin{align*}
d^{F^{(1)}_4}_{1,1}(z)& =(z-(-q_s)^{4})(z-(-q_s)^{10})(z-(-q_s)^{12})(z-(-q_s)^{18}), \\
d^{F^{(1)}_4}_{1,2}(z)& =(z-(-q_s)^{6})(z-(-q_s)^{10})(z-(-q_s)^{12})(z-(-q_s)^{14})(z-(-q_s)^{16}), \\
d^{F^{(1)}_4}_{1,3}(z)& =(z-(-q_s)^{7})(z-(-q_s)^{9})(z-(-q_s)^{13})(z-(-q_s)^{15}), \\
d^{F^{(1)}_4}_{1,4}(z)& =(z-(-q_s)^{8})(z-(-q_s)^{14}), \\
d^{F^{(1)}_4}_{2,2}(z)& =(z-(-q_s)^{4})(z-(-q_s)^{6})(z-(-q_s)^{8})(z-(-q_s)^{10}) (z-(-q_s)^{12})
             (z-(-q_s)^{14})^2(z-(-q_s)^{16})(z-(-q_s)^{18}), \\
d^{F^{(1)}_4}_{2,3}(z)& =(z-(-q_s)^{5})(z-(-q_s)^{7})(z-(-q_s)^{9})(z-(-q_s)^{11})^2(z-(-q_s)^{13})
            (z-(-q_s)^{15})(z-(-q_s)^{17}), \\
d^{F^{(1)}_4}_{2,4}(z)& =(z-(-q_s)^{6})(z-(-q_s)^{10})(z-(-q_s)^{12})(z-(-q_s)^{16}), \\
d^{F^{(1)}_4}_{3,3}(z)& =(z-(-q_s)^{2})(z-(-q_s)^{6})(z-(-q_s)^{8})(z-(-q_s)^{10})(z-(-q_s)^{12})(z-(-q_s)^{16})(z-(-q_s)^{18}), \\
d^{F^{(1)}_4}_{3,4}(z)& =(z-(-q_s)^{3})(z-(-q_s)^{7})(z-(-q_s)^{11})(z-(-q_s)^{13})(z-(-q_s)^{17}),\\
d^{F^{(1)}_4}_{4,4}(z)& =(z-(-q_s)^{2})(z-(-q_s)^{8})(z-(-q_s)^{12})(z-(-q_s)^{18}).
\end{align*}
\fontsize{12}{12}\selectfont

Now we can define $V_{\mQ}(\beta)$, for each $\beta \in \PR_{E_6}$ and $[\mQ] \in \lf \mQ \rf$ naturally.

\begin{definition} \hfill
\begin{enumerate}
\item[{\rm (1)}]
We define the smallest abelian full subcategory
$\mC_\mQ$ inside $\mathcal{C}_{F^{(1)}_{4}}$ such that
\begin{itemize}
\item[({\rm a})] it is stable by taking subquotient, tensor product and extension,
\item[({\rm b})] it contains $V_\mQ(\beta)$ for all $\beta \in \PR_{E_{6}}$.
\end{itemize}
\item[{\rm (2)}] We define the smallest abelian full subcategory
$\mC_\Z$ inside $\mathcal{C}_{F^{(1)}_{4}}$ such that
\begin{itemize}
\item[({\rm a})] it is stable by taking subquotient, tensor product and extension,
\item[({\rm b})] it contains $V_\mQ(\beta)^{k*}$ for all $k\in\Z$ and all $\beta \in \PR_{E_{6}}$.
\end{itemize}
\end{enumerate}
\end{definition}

Recall the subcategories $\mathcal{C}^{(1)}_Q$ of $\mathcal{C}_{E^{(1)}_{6}}$ in \cite{HL11},
$\mathcal{C}^{(2)}_Q$ of $\mathcal{C}_{E^{(2)}_{6}}$ in \cite[Arxiv version 1]{Oh15E}. Now we can naturally expect that
all results in this paper can be extended to $\mathcal{C}^{(\ii)}_Q$ $(\ii=1,2)$, $\mC_\mQ$, $U^-_\A(E_6)^\vee$ and $V_\mQ(\um)$, etc.


\end{document}